\newtheorem{lemma}{Lemma}
\newtheorem{prop}[lemma]{Proposition}
\newtheorem{cor}[lemma]{Corollary}
\newtheorem{thm}[lemma]{Theorem}
\newtheorem{thm?}[lemma]{Theorem?}
\newtheorem{ques}{Question}
\newtheorem*{nolabelthm}{Main Theorem}
  \newcommand{\textcyr}[1]{%
    {\fontencoding{OT2}\fontfamily{wncyr}\fontseries{m}\fontshape{n}%
     \selectfont #1}}
\newcommand{\Sha}{{\mbox{\textcyr{Sh}}}}
\title{Period-index problems in WC-groups IV: a local transition theorem}
\author{Pete L. Clark}
\thanks{The author is partially supported by National Science Foundation grant DMS-0701771.}
\address{Department of Mathematics \\ Boyd Graduate Studies Research Center \\ University 
of Georgia \\ Athens, GA 30602-7403 \\ USA}
\email{plclark@gmail.com}
\newcommand{\F}{\ensuremath{\mathbb F}}
\newcommand{\N}{\ensuremath{\mathbb N}}
\newcommand{\Q}{\ensuremath{\mathbb Q}}
\newcommand{\R}{\ensuremath{\mathbb R}}
\newcommand{\Z}{\ensuremath{\mathbb Z}}
\newcommand{\C}{\ensuremath{\mathbb C}}
\newcommand{\ra}{\ensuremath{\rightarrow}}
\newcommand{\Hom}{\operatorname{Hom}}
\newcommand{\Ker}{\operatorname{Ker}}
\newcommand{\unr}{\operatorname{unr}}
\newcommand{\PP}{\ensuremath{\mathbb P}}
\newcommand{\Aut}{\operatorname{Aut}}
\newcommand{\GL}{\operatorname{GL}}
\newcommand{\red}{\operatorname{red}}
\renewcommand{\Im}{\operatorname{Im}}
\newcommand{\Pic}{\operatorname{Pic}}
\newcommand{\FPic}{\mathbf{Pic}}
\newcommand{\Gm}{\mathbb G_m}
\newcommand{\gk}{\mathfrak{g}_k}
\newcommand{\sep}{\operatorname{sep}}
\newcommand{\Res}{\operatorname{Res}}
\newcommand{\Br}{\operatorname{Br}}
\newcommand{\et}{\operatorname{\'et}}
\newcommand{\Spec}{\operatorname{Spec}}
\newcommand{\gK}{\mathfrak{g}_K}
\newcommand{\car}{\operatorname{char}}
\renewcommand{\Aut}{\operatorname{Aut}}
\newcommand{\rank}{\operatorname{rank}}
\newcommand{\cd}{\operatorname{cd}}
\renewcommand{\gg}{\mathfrak{g}}
\renewcommand{\et}{\operatorname{et}}
\newcommand{\fl}{\operatorname{fl}}
\newcommand{\PGL}{\operatorname{PGL}}
\newcommand{\PS}{\operatorname{PS}}
\newcommand{\reg}{\operatorname{reg}}
\begin{document}
\maketitle

\begin{abstract}
Let $K$ be a complete discretely valued field with perfect residue field $k$.  Assuming upper bounds on the relation between period and index for WC-groups over $k$, 
we deduce corresponding upper bounds on the relation between period and index for WC-groups over $K$.  Up to a constant 
depending only on the dimension of the torsor, we recover theorems of 
Lichtenbaum and Milne in a ``duality free'' context.  Our techniques include the use of \emph{LLR models} of torsors under 
abelian varieties with good reduction and a generalization of the period-index 
obstruction map to flat cohomology.  In an appendix, we consider some related issues of a field-arithmetic nature.
\end{abstract}


\noindent

\section*{Introduction}

\subsection{Notation and Terminology}
\textbf{} \\ \\ \noindent
For a field $K$, we let $K^{\sep}$ denote a separable closure of $K$ and $\overline{K}$ an algebraic closure of $K$.  We 
write $\mathfrak{g}_K$ for $\Aut(K^{\sep}/K)$.  
\\ \\
If $X_{/K}$ is an integral variety, let $X^{\reg}$ denote its regular locus and $I(X)$ the index of $X$, i.e., the gcd of 
all degrees of closed points on $X$.
\\ \\ 
For $M$ a $\mathfrak{g}_K$-module and $\eta \in H^i(K,M)$ a Galois cohomology class, we denote by $P(\eta)$ and $I(\eta)$ 
the \textbf{period} and \textbf{index} of $\eta$ (c.f. \cite[$\S 2$]{WCII}).  Especially, if $A_{/K}$ is an abelian variety, then 
$H^1(K,A)$ is canonically isomorphic to the \textbf{Weil-Ch\^atelet group} of $A$, which parameterizes \textbf{torsors} 
$(X,\mu)$ under $A$.  Under this correspondence, we have $I(\eta) = I(X)$.
\\ \\
Let $G_{/K}$ be an algebraic group scheme.  Then $G$ gives rise to a sheaf of groups on the flat site of $\Spec K$.  We put 
$H^0(K,G) = G(K)$, and we denote by $H^1(K,G)$ the first flat cohomology, a pointed set.  If $G$ is commutative, for all $i \geq 0$ we have flat 
cohomology groups $H^i(K,G)$.  No confusion should arise between flat and \'etale (= Galois, here) cohomology, because flat 
and \'etale cohomology coincide when $G_{/K}$ is a smooth group scheme, whereas if $G_{/K}$ is not smooth, strictly 
speaking Galois cohomology of $G$ does not make sense.  (Only once do we consider in passing the Galois cohomology of 
the maximal \'etale quotient of a non-smooth finite $K$-group scheme $G$, and then we use the notation $G^{\circ}$ to 
disinguish this quotient from $G$ itself.)
\\ \\
Recall that a principal polarization on an abelian variety $A_{/K}$ is a $K$-rational element $\lambda$ of the N\'eron-Severi group 
$NS(A)$ such that the corresponding homomorphism $\varphi_{\lambda}: A \ra A^{\vee}$ has the property that 
$(\varphi_{\lambda})_{/K^{\sep}} = 
\varphi_{\mathcal{L}}$ for some ample line bundle $\mathcal{L} \in \Pic(A_{/\overline{K}})$.  We say that a polarization 
$\lambda$ is \textbf{strong} if the line bundle $\mathcal{L}$ can be chosen to be $K$-rational.  The coboundary map in 
cohomology of the short exact sequence of $\gK$-modules
\[0 \ra \FPic^0(A)(K^{\sep}) \ra \FPic(A)(K^{\sep}) \ra NS(A)(K^{\sep}) \ra 0, \]
yields a homomorphism $\Phi_{\PS}: H^0(NS(A)) \ra H^1(K,A^{\vee})$ such that $\lambda \in NS(A)(K)$ is strong iff 
$\Phi_{\PS}(\lambda) = 0$.  
We recall from \cite[$\S 4$]{Poonen-Stoll} that \[\Phi_{\PS}(H^0(K,NS(A))) \subseteq H^1(K,A^{\vee})[2]. \]
It will then follow from Theorem \ref{GENERALPIBOUNDTHM} that any polarization 
can be made strong by passing to a field extension of degree at most $2^{2  \dim A}$.  It is easy to see that 
every polarization on an elliptic curve is strong.  

\subsection{The Main Theorem}

\begin{nolabelthm} 
\label{MT1}
\textbf{} \\ \noindent Let $K$ be a complete discretely valued field with perfect residue field $k$.  \\
a) Suppose that there exists $i \in \N$ and a function $c: \Z^+ \ra \Z^+$ such that: for all abelian varieties
$A_{/k}$ and all torsors $\eta \in H^1(k,A)$, 
\[ I(\eta) \leq c(\dim A) P(\eta)^i . \]
Then there exists a function $C: \Z^+ \ra \Z^+$ such that for all finite extensions $L/K$, all principally polarizable 
abelian varieties $A_{/L}$ and all torsors $\eta \in H^1(L,A)$, 
\[I(\eta) \leq C(\dim A) P(\eta)^{\dim A + i}. \]
b) Suppose that $\car(k) = 0$, that there exists $i \in \N$ and a function $c: \Z^+ \ra \Z^+$ such that: for all 
finite extensions $l/k$, all nontrivial abelian varieties $A_{/l}$ and all torsors $\eta \in H^1(l,A)$, we have 
\[I(\eta) \leq c(\dim A) P(\eta)^{\dim A + i - 1}. \]
Then there exists a function $C: \Z^+ \ra \Z^+$ such that for all finite extensions $L/K$, all principally polarizable 
abelian varieties $A_{/L}$ and all torsors $\eta \in H^1(L,A)$, 
\[I(\eta) \leq C(\dim A) P(\eta)^{\dim A + i}. \]
\end{nolabelthm}
\noindent
\subsection{Outline of the paper}
\textbf{} \\ \\ \noindent
The theorem as stated above is admittedly rather technical.  So we believe that most (if not all) readers will benefit from 
a discussion which places it in a larger context.
We do so at some length in $\S 1$, beginning in $\S 1.1$ by recalling some prior instances of 
``transition theorems'' in Galois cohomology and field arithmetic.  Throughout the rest of $\S 1$ we 
repeatedly give examples and remarks to show that many of the complications in the statement of the Main Theorem are necessary.
\\ \indent In $\S 2$ we introduce a new technical tool, the 
period-index obstruction map $\Delta$ in flat cohomology, which allows us to work with torsors with period divisible by 
the characteristic of the ground field, a case that was disallowed in our previous work on the subject.  As a first indication that our 
formalism is a fruitful one, we derive a foundational result (Theorem \ref{GENERALPIBOUNDTHM}) bounding the index of any 
torsor under an abelian variety $A$ in terms of the period and $\dim A$, which had up until now only been established in 
special cases. \\ \indent
The philosophy of the present work is to analyze the local period-index problem from geometric perspective, 
more precisely in terms of the geometry of regular $R$-models of torsors.  That we can proceed in this way is thanks in 
large part to two recent deep results on regular models due to Liu-Lorenzini-Raynaud and Gabber-Liu-Lorenzini.  We begin 
$\S 3$ with careful statements of these results (the latter of which is, as of this writing, not yet publicly available).  
The rest of the section is devoted to a proof of the Main Theorem as well as a more precise result (Theorem \ref{CTTHM2}) for 
iterated Laurent series fields over the complex numbers. 
\\ \indent
We have also included an Appendix, $\S 4$, which considers relations between the property of a field that all torsors 
under abelian varieties have rational points and some other, better known, properties of a field-arithmetic nature.

\subsection{Acknowledgements} \textbf{} \\ \\
\noindent This work was begun at the Mathematical Sciences 
Research Institute in 2006 and essentially completed at Universit\'e de Bordeaux I 
in 2009.  I am grateful to both institutions for providing financial support and 
effective working environments.  Thanks to Q. Liu and S. Sharif for helpful 
conversations.

\section{Motivating the Main Theorem}

\subsection{Transition theorems} \textbf{} \\ \\
\noindent
Recall the $C_i$ property of fields: a polynomial with coefficients in $K$ which is homogeneous polynomial 
of degree $n$ in more than $n^i$ variables has a nontrivial zero.  A field $C_0$ if and only if it is 
algebraically closed \cite[Lemma 3.2]{Greenberg}.  Moreover:
\begin{thm}
\label{CITHM}
a) (Chevalley) A finite field is $C_1$. \\
b) (Tsen) If $K$ is $C_i$, then so is any algebraic extension $L/K$. \\
c) (Tsen) If $K$ is $C_i$ and $L/K$ has transcendence degree $j$, then $L$ is $C_{i+j}$. \\
d) (Lang) A CDVF with algebraically closed residue field is $C_1$. \\
e) (Greenberg) If $k$ is $C_i$, then $k((t))$ is $C_{i+1}$.  
\end{thm}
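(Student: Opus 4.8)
The plan is to prove the five parts essentially separately, with (c) building on (b) and (e) sharing its technical core with (d); these are the classical results of the ``Tsen--Lang'' circle. For (a) I would run the Chevalley--Warning count: for $f$ homogeneous of degree $n$ over $\Fq$ in $N > n$ variables, the number of zeros of $f$ in $\Fq^N$ equals $\sum_{x \in \Fq^N}\bigl(1 - f(x)^{q-1}\bigr)$ in $\Fq$; expanding $f^{q-1}$, every monomial that occurs has total degree $n(q-1) < N(q-1)$ and hence involves some variable with exponent in $\{0,\ldots,q-2\}$, and since $\sum_{t\in\Fq} t^a = 0$ for every such $a$ the whole sum vanishes. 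So the number of zeros is divisible by $p$, and as the origin is one of them there is a nontrivial zero.

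For (b), since a form has only finitely many coefficients they lie in a finite subextension $K'/K$, which is $C_i$ because $K$ is, so we may assume $L/K$ finite of degree $d$; fixing a $K$-basis $\omega_1,\ldots,\omega_d$ of $L$ and writing $x_j = \sum_k y_{jk}\omega_k$, a form $f$ over $L$ of degree $n$ in $N > n^i$ variables becomes, after re-expanding in the basis, a system of $d$ forms over $K$, each homogeneous of degree $n$, in the $Nd$ variables $y_{jk}$, whose nontrivial common zeros are exactly the nontrivial zeros of $f$; since $Nd > d\cdot n^i$ we conclude by the $C_i$ property for \emph{systems} of forms (a standard strengthening of the single-form definition; cf. \cite{Greenberg}). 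For (c), by (b) and induction on transcendence degree it suffices to show $K$ is $C_i$ implies $K(t)$ is $C_{i+1}$: after clearing denominators so the coefficients of $f$ lie in $K[t]$ of $t$-degree $\leq\delta$, seek $x_j = \sum_{l=0}^{e} x_{jl}t^l \in K[t]$; then the vanishing of $f(x_1,\ldots,x_N) \in K[t]$ amounts to the vanishing of its $\delta + ne + 1$ coefficients, each a form of degree $n$ over $K$ in the $N(e+1)$ variables $x_{jl}$, and since $N - n^{i+1}\geq 1$, for $e$ large we have $N(e+1) > (\delta+ne+1)n^i$, so the systems $C_i$ property produces a nontrivial solution in $K[t]\subset K(t)$.

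For (d), Lang's theorem, the argument changes character. Since $K$ is complete, scaling $f$ by a power of the uniformizer $\pi$ we may take its coefficients in $\OO_K$ with one of them a unit, so the reduction $\bar f$ is a nonzero form of degree $n$ in $N>n$ variables over the algebraically closed --- hence $C_0$ --- field $k$; thus $\bar f$ has a nontrivial zero, which we lift to $a\in\OO_K^N$ with not all coordinates in the maximal ideal and then try to refine to an exact zero of $f$ by $\pi$-adic successive approximation. The main obstacle, and the reason this is a genuine theorem rather than a direct application of Hensel's lemma, is that $\bar f$ need not be smooth at the chosen zero (it could, e.g., be a perfect power of a linear form), so a naive Newton step need not improve the approximation; Lang circumvents this by an induction on $n$ and $N$ in which, at a singular approximate zero, one restricts to an auxiliary linear subspace and/or splits off the singular factor, reducing to a form of smaller degree or in fewer variables to which the inductive hypothesis applies.

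For (e), Greenberg's theorem, one fuses the previous two ideas: the $t$-adic successive approximation of (d) with the coefficient count of (c). Given $f$ of degree $n$ over $k((t))$ in $N > n^{i+1}$ variables, scaled so its coefficients lie in $k[[t]]$ with one a unit, one builds a solution $t$-adically by solving $f \equiv 0 \pmod{t^{m+1}}$ stage by stage; at each stage the requirement is a system of forms over $k$ whose number of equations grows only linearly in $m$, while the number of fresh variables grows linearly in $m$ with the strictly larger slope $N$ versus $n^{i+1}$, so repeated application of the $C_i$ property of $k$ --- with enough uniformity that the corrections assemble into a genuine power-series solution --- yields a nontrivial zero in $k((t))$. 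As in (d), the delicate point is the convergence of the approximation; Greenberg's contribution is precisely a ``continuity of roots'' mechanism valid over an arbitrary $C_i$ residue field rather than only an algebraically closed one. In short, the real content sits in (d) and the convergence mechanism it shares with (e), while (a)--(c) are bookkeeping layered on Chevalley--Warning and the $C_i$ property for systems.
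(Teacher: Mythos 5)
Your outline is essentially correct, but you should know that the paper does not prove this theorem at all: its ``proof'' consists of the four citations \cite{Chevalley}, \cite{Tsen}, \cite{Lang52}, \cite{Greenberg67}, so you have reconstructed the content of the cited literature rather than an argument in the paper. Within your sketch, parts (a)--(c) are complete modulo one genuinely nontrivial black box that you correctly flag: the passage from the single-form $C_i$ property to the property for \emph{systems} of forms of equal degree (Lang--Nagata), which is what makes the $Nd > d\,n^i$ and $N(e+1) > (\delta+ne+1)n^i$ counts legitimate. Your counts themselves are right, including the slope comparison in (e): each increment of the truncation degree adds $n$ new equations of degree $n$, costing $n\cdot n^i = n^{i+1}$ against $N$ fresh variables, which is exactly where the hypothesis $N > n^{i+1}$ (rather than $N > n^i$) is consumed, together with the need to divide out a common power of $t$ to obtain a \emph{primitive} approximate solution before invoking the lifting mechanism. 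The two places where your proposal is only a description of a proof rather than a proof are (d) and the convergence step of (e): ``restrict to a subspace and/or split off the singular factor'' and ``continuity of roots valid over a $C_i$ residue field'' are accurate pointers to Lang's induction and to Greenberg's approximation theorem, but those are precisely the hard cores of \cite{Lang52} and \cite{Greenberg67}. Since the paper itself treats the entire theorem as a black box, deferring to those sources at exactly those two points is a reasonable stopping place; just be explicit that (d) and the approximation theorem underlying (e) are being cited, not proved.
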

\noindent
\begin{proof}
See \cite{Chevalley}, \cite{Tsen}, \cite{Lang52}, \cite{Greenberg67}.
\end{proof}
\noindent
Part c) can be rephrased as: if $K$ is a CDVF with $C_0$-residue field, then $K$ is 
$C_1$.  This suggests that Greenberg's theorem might be generalized to the statement that a 
CDVF $K$ with $C_i$ residue field is $C_{i+1}$.  In particular, E. Artin conjectured that 
$p$-adic fields are $C_2$.  But this turned out to be false: $\Q_p$ is not $C_i$ for any $i$.  
\\ \\
More recently other numerical invariants of fields with properties analogous to those of 
Theorem \ref{CITHM} have been considered.  Especially, in \cite{CG} J.-P. Serre defined, for a prime number $p$, the 
\textbf{$p$-cohomological dimension} $\cd_p(K)$ of a field $K$ as well as the \textbf{cohomological dimension} 
$\cd(K) = \sup_p \cd_p(K)$.  The relation $\cd_p(k) \leq i$ satisfies all the properties of Theorem 
\ref{CITHM}.  Moreover, a $C_1$ field satisfies $\cd_p(k) \leq 1$ for all $p$ and even the stronger 
property mentioned above: if $K$ is a CDVF with residue field $k$ satisfying $\cd_p(k) \leq i$, then 
$\cd_p(K) \leq i+1$ \cite[Prop. II.12]{CG}.  
\\ \\
In some cases there are relations between the cohomological dimension and rational points on 
certain $K$-varieties.  For a perfect field $K$, $\cd(K) \leq 1$ is equivalent 
to the condition that any torsor under a connected linear group has a $K$-rational point.  For a perfect field $K$, 
Serre conjectures that $\cd(K) \leq 2$ implies that every torsor under a simply connected, semisimple linear group has a $K$-rational point, which is now known to 
be true in (at least) many cases.  Finally, Voevodsky's work gives an interpretation of $\cd_2(K) \leq i$ in 
terms of the u-invariant of quadratic forms.     
\\ \\
There is a closely related problem on Brauer groups.  Say a field $K$ satisfies property
\textbf{Br(d)} if for all finite extensions $L/K$ and all $P \in \Z^+$, every class in $\Br(L)[P]$ 
has a splitting field of degree dividing $P^d$.  A perfect field $K$ is Br(0) -- 
that is, $\Br(L) = 0$ for all finite $L/K$ -- iff $\cd(K) \leq 1$.  Much of the 
content of class field theory is encoded in the statement that local and global 
fields are Br(1).  It is not hard to show that if $K$ is CDVF with perfect Br(i) residue field, then $K$ is Br(i+1).
On the other hand, the implication $K$ is Br(d) $\implies$ $K(t)$ is Br(d+1) is widely 
believed to be true but remains open, perhaps \emph{the} outstanding problem in the theory of Brauer groups.

\subsection{Property WC(i)}
\textbf{} \\ \\ \noindent
A field $K$ is \textbf{WC(i)} if for every abelian variety $A_{/K}$ and every $\eta \in H^1(K,A)[P]$, 
the index of $\eta$ divides $P^{i}$.  In particular, a field is WC$(0)$ iff 
every torsor under an abelian variety $A_{/K}$ has a $K$-rational point.
\\ \\
Remark 1.2.1: Let $L/K$ be a finite field extension, $A_{/L}$ an abelian variety, and $B = \Res_{L/K}$ be 
the Weil restriction of $B$ from $L$ down to $K$.  Shapiro's Lemma gives a canonical isomorphism 
$H^1(L,A) = H^1(K,B)$.  It follows that the property WC($i)$ is automatically inherited by all algebraic 
field extensions.
\\ \\
Example 1.2.2: A PAC field is WC(0).  In particular, separably closed $\implies$ WC(0).\footnote{For more about the PAC property, see the 
Appendix.} 
\begin{thm}(Lang, \cite{Lang56})
A finite field is WC(0).
\end{thm}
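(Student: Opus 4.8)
The plan is to reduce the statement to the cohomological vanishing $H^1(\Fq, A) = 0$ for every abelian variety $A_{/\Fq}$. Granting this, each $\eta \in H^1(\Fq,A)$ is the trivial class, so the associated torsor $X$ is $\Fq$-isomorphic to $A$ and in particular has an $\Fq$-rational point; hence $I(\eta) = 1$, which divides $P(\eta)^0$. (Since the index is a positive integer dividing $1$, we in fact obtain WC$(0)$ with the optimal exponent, and no worse.) The vanishing $H^1(\Fq,A)=0$ is the special case $G = A$ of Lang's theorem that $H^1(\Fq,G) = 0$ for every smooth connected algebraic group $G_{/\Fq}$; as an abelian variety is such a group, it suffices to recall the short proof in this case.

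So let $A$ be an abelian variety over $\Fq$, write $F \colon A \to A$ for the $q$-power Frobenius endomorphism, and set $\psi = \mathrm{id}_A - F \in \End(A)$. Then $\ker \psi = \{a \in A(\overline{\Fq}) : F(a) = a\} = A(\Fq)$ is finite (as $A$ is proper), while the differential of $\psi$ at the origin is $\mathrm{id} - dF = \mathrm{id}$ on $\Lie A$, since Frobenius has vanishing differential in characteristic $p$. Thus $\psi$ is a separable isogeny, hence surjective on $\overline{\Fq}$-points. Now let $(X,\mu)$ be a torsor under $A$ over $\Fq$. Since $X_{\overline{\Fq}}$ is a trivial torsor it has an $\overline{\Fq}$-point $\bar x$, and there is a unique $g \in A(\overline{\Fq})$ with $F(\bar x) = \mu(g,\bar x)$. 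Choosing $t \in A(\overline{\Fq})$ with $\psi(t) = g$, a one-line computation — using that the torsor action and the group law are defined over $\Fq$, so commute with $F$ — shows that $y := \mu(t,\bar x)$ satisfies $F(y) = y$, i.e. $y \in X(\Fq)$. Therefore $\eta = [X]$ is trivial, as desired.

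The only real content here is the surjectivity of $\psi$, which rests on the single input that Frobenius has zero differential, forcing $\psi$ to be separable; the passage from surjectivity of $\psi$ to rational points on torsors is formal descent. So I do not expect a serious obstacle — this is Lang's classical theorem \cite{Lang56}, and indeed one option is simply to cite it. It is worth noting, though, that the argument genuinely uses the arithmetic of finite fields: the weaker fact that $\Fq$ has cohomological dimension $\le 1$ (Theorem \ref{CITHM}) does not by itself produce rational points on torsors under the non-linear group $A$, so $C_i$-type reasoning in the style of Greenberg and Tsen is not a substitute.
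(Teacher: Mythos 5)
Your argument is correct: the Lang isogeny $\psi = \operatorname{id}_A - F$ has finite kernel $A(\Fq)$ and derivative the identity on $\Lie A$ (since $dF = 0$), hence is a separable isogeny and in particular surjective on $\overline{\F}_q$-points, and the twisting computation $F(\mu(t,\bar x)) = \mu(F(t)+g,\bar x) = \mu(t,\bar x)$ correctly produces a Frobenius-fixed, hence $\Fq$-rational, point of the torsor. This yields the strongest form of the statement, $H^1(\Fq,A)=0$, which certainly gives WC$(0)$. It is, however, a different route from the one this paper actually takes: in the main text the theorem is only cited, and the self-contained derivation appears in the Appendix (Example 4.2.6), where WC$(0)$ for finite fields is deduced from F.K.\ Schmidt's zeta-function theorem that every geometrically integral curve over $\F_q$ has a zero-cycle of degree $1$, whence $\Fq$ is I1, combined with Proposition \ref{LITTLEPROP} (period divides index, so index $1$ forces the class to be trivial). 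Your proof is the classical one of \cite{Lang56} and is more direct and more elementary in its inputs (no zeta functions), and it applies verbatim to any smooth connected group; the paper's route is chosen because it fits the period-index formalism of the article and simultaneously recovers Wedderburn's theorem from the same I1 hypothesis. Your closing remark is also apt: $\cd(\Fq)\le 1$ alone does not give rational points on torsors under abelian varieties, so some genuinely arithmetic input (either the Lang isogeny or Schmidt's theorem) is unavoidable.
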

\noindent
Geyer and Jarden have constructed many non-PAC WC(0) fields \cite{GeyerJarden}.
\\ \\
Example 1.2.3: The field $k = \R$ is not WC(0): e.g. the smooth model of
\[y^2 = -(x^4+1) \]
is a genus one curve without an $\R$-point.  On the other 
hand, since $\gg_{\R} = \Z/2\Z$, every Galois cohomology group $H^i(\R,M)$ 
with $i > 0$ is $2$-torsion.  In particular, any nontrivial torsor under a 
real abelian variety has period equals index equals $2$, so $\R$ is certainly 
WC(1). 
\\ \\
In view of the local transition theorems for properties C$_i$ and Br(i), one might guess that 
if $K$ is a complete discretely valued field with WC(i) residue field $k$, then $K$ is itself 
WC(i+1), at least in equal characteristic.  However, this is generally very far from being the case, and 
is not even true in the (most favorable) case in which $k$ is algebraically closed of characteristic $0$.
\\ \\
For $g \in \Z^+$, let us write $\C_g$ for the iterated Laurent series field 
$\C((t_1))\cdots ((t_{g}))$.\footnote{Here we are, of course, using $\C$ to denote the field of complex numbers, 
although an arbitrary algebraically closed field of characteristic $0$ would serve as well.}  In particular $\C_1 = \C((t))$.  
The field $\C_g$ has cohomological dimension $g$ at every prime $p$, has 
property C$_{g}$ but not C$_{g-1}$, and has property 
Br(g-1).  The absolute Galois group of $\C_g$ is isomorphic to $\hat{\Z}^g$. 
\\ \\
The following two classical results limit the WC(i) properties of these fields:
\begin{prop}(Lang-Tate \cite[p. 678]{LT})
\label{LTPROP}
Let $g,n \in \Z^+$ with $n > 1$. \\
a) Let $E_{/\C_g}$ be any elliptic curve with $j(E) \in \C$. \\  Then there 
exists a torsor $\eta \in H^1(\C_g,E^g)$ with $P(\eta) = n, \ I(\eta) = n^g$.  \\
b) Let $E_{/\C_{2g}}$ be any elliptic curve with $j(E) \in \C$. \\  Then there 
exists a torsor $\eta \in H^1(\C_{2g},E^g)$ with $P(\eta) = n, \ I(\eta) = n^{2g}$.
\end{prop}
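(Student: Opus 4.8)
The plan is to reduce both parts to one transparent computation with the cohomology of $\gg_{\C_m}\cong\Zhat^m$, taking $m=g$ in (a) and $m=2g$ in (b); here $\C_m$ denotes the iterated Laurent series field in $m$ variables over $\C$. First I would assemble the structural inputs. Writing $K=\C_m$, one has $\gg_{K}\cong\Zhat^m$ with topological generators $\sigma_1,\dots,\sigma_m$ attached to the parameters $t_1,\dots,t_m$; every finite extension $L/K$ corresponds to an open subgroup $\gg_L\le\Zhat^m$ and is again an iterated Laurent series field over $\C$ (extend the residue field, which is of this type with one fewer variable, then adjoin a root of the last parameter --- Puiseux's theorem plus induction on $m$). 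Since $j(E)\in\C$, over $\overline{\C_g}$ the curve $E$ becomes isomorphic to $E_0\times_\C\overline{\C_g}$ for an elliptic curve $E_0/\C$ with $j(E_0)=j(E)$; I would run the argument for the constant curve $E=E_0\times_\C\C_g$, which is the case treated in \cite{LT} and the one needed for the applications in \S 1. Then $E^g[n]=E_0[n]^g\cong(\Z/n)^{2g}$ carries the trivial $\gg_K$-action, because the $n$-torsion of $E_0$ is already $\C$-rational.

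The heart of the argument is an index formula. I would first show that $E_0(L)$ is divisible for every finite extension $L/K$: induct on the number of variables (the base case, $E_0(\C)$ being a complex torus, is classical), writing $L=L'((s))$ and noting that the reduction map $E_0(L)\to E_0(L')$ is surjective with kernel the points of the formal group of $E_0$ over the $\Q$-algebra $\OO_L$; this formal group has a logarithm, hence is $\widehat{\mathbb G}_a$, so its group of points $(\mathfrak m_L,+)$ is uniquely divisible, while $E_0(L')$ is divisible by induction. Hence $E^g(L)/nE^g(L)=0$, so the Kummer sequence gives a canonical isomorphism $H^1(L,E^g)[n]\cong H^1(L,E^g[n])$, and the latter equals $\Hom_{\mathrm{cont}}(\gg_L,E^g[n])$ since the action is trivial. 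Now let $\eta\in H^1(K,E^g)[n]$ correspond to $\phi\colon\Zhat^m\to E^g[n]$. For finite $L/K$ one has $\eta|_L=0\iff\phi|_{\gg_L}=0\iff\gg_L\subseteq\ker\phi$; since $\ker\phi$ is open, its fixed field $L_0$ satisfies $\eta|_{L_0}=0$ with $[L_0:K]=[\Zhat^m:\ker\phi]$, while every $L$ with $\eta|_L=0$ contains $L_0$ and so has $[L:K]$ a multiple of $[L_0:K]$; recalling that $I(\eta)$ is the gcd of the $[L:K]$ with $\eta|_L=0$,
\[ I(\eta)=[\Zhat^m:\ker\phi]=|\im\phi|,\qquad P(\eta)=\text{the exponent of }\im\phi. \]

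It then suffices to choose $\phi$. Fix $P\in E_0[n]$ of exact order $n$, and for (b) also $Q\in E_0[n]$ with $\{P,Q\}$ a $\Z/n$-basis of $E_0[n]$. For (a) ($m=g$, $A=E^g$), take $\phi$ sending $\sigma_i$ to the vector of $E_0[n]^g$ with $P$ in the $i$-th coordinate and $0$ elsewhere; then $\ker\phi=(n\Zhat)^g$, so $\im\phi\cong(\Z/n)^g$ has order $n^g$ and exponent $n$, giving a torsor with $P(\eta)=n$ and $I(\eta)=n^g$. For (b) ($m=2g$, $A=E^g$ over $\C_{2g}$), take $\phi$ sending $\sigma_{2i-1}$ and $\sigma_{2i}$ to the vectors with $P$ and $Q$, respectively, in the $i$-th coordinate and $0$ elsewhere; then $\ker\phi=(n\Zhat)^{2g}$, so $\im\phi=E_0[n]^g=E^g[n]$ has order $n^{2g}$ and exponent $n$, giving a torsor with $P(\eta)=n$ and $I(\eta)=n^{2g}$.

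The cohomology here is formal once the setup is in place; I expect the genuine work to be the structural inputs. The description $\gg_{\C_m}\cong\Zhat^m$ and the fact that finite extensions of $\C_m$ remain iterated Laurent fields are where Puiseux's theorem and $\car(\C)=0$ enter, and the divisibility of $E_0(L)$ is exactly what forces the reduction to a constant curve: for a non-constant twist with additive reduction, $H^1(\C_m,E)$ can be too small even to contain a class of period $n$, so the hypothesis $j(E)\in\C$ --- in practice, that $E$ has good reduction --- is essential. A reader preferring to cite these facts can invoke \cite{LT} directly.
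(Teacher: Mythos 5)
Your argument is correct as far as it goes, and it is worth saying up front that the paper itself offers no proof of this proposition: it is stated with a bare citation to Lang--Tate, so there is nothing internal to compare against except the closely related machinery of \S 3 (your ``unique minimal splitting field'' computation of the index is exactly the mechanism of Proposition \ref{GERRITZENPROP} and Remark 3.2.1, specialized to a constant curve with trivial Galois action on $E[n]$). The structural inputs ($\gg_{\C_m}\cong\Zhat^m$, finite extensions of $\C_m$ again of the same shape, divisibility of $E_0(L)$ via the formal logarithm in residue characteristic $0$) are all sound, the identification $I(\eta)=|\im \phi|$ and $P(\eta)=\exp(\im\phi)$ is justified correctly because the Kummer map $H^1(L,E^g[n])\to H^1(L,E^g)[n]$ is an isomorphism over \emph{every} finite $L/K$, and your two choices of $\phi$ give exactly the asserted invariants.

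The one point to be candid about is that you prove the statement only for the constant curve $E=E_0\times_{\C}\C_g$, whereas the proposition as printed says ``any elliptic curve with $j(E)\in\C$.'' This is not a gap in your argument but a defect in the statement: your closing caveat about twists is not merely a worry, it is decisive. For the quadratic twist of $E_0$ by $t_1$ over $K=\C((t_1))$ and $n$ odd, one computes $H^1(K,E[n])=H^1(\Zhat,(\Z/n)^2(\chi))=M/2M=0$, so $H^1(K,E)[n]=0$ and no torsor of period $n$ exists at all; the same vanishing persists over $\C_g$ and for $E^g$. So the literal statement fails for non-constant forms, and the constant-curve case --- which is what Lang--Tate construct and the only case the paper uses (to show $\C_g$ is not almost WC$(g-1)$) --- is the correct reading. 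If you wanted to stretch slightly beyond the constant curve, the same proof covers any $E$ that becomes constant over an extension of degree prime to $n$, by a restriction--corestriction argument, but for the paper's purposes your version is complete.
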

\noindent
%
\begin{thm}(Shafarevich \cite{Shafarevich61})
\label{SHATHM}
For every $n > 1$, there exists an abelian variety $A_{/\C((t))}$ and a torsor 
$\eta \in H^1(\C((t)),A)$ such that $P(\eta) = n$, $I(\eta) > n$.
\end{thm}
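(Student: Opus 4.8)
The plan is to write down an explicit torsor, exploiting the simple arithmetic of $K = \C((t))$. Since $\mathfrak{g}_K \cong \hat{\Z}$, for each $d \geq 1$ there is a \emph{unique} extension of degree $d$, namely $K_d := \C((t^{1/d}))$, and every finite extension of $K$ is of this form. Consequently the index of a torsor $X$ under an abelian variety $A_{/K}$ is controlled by a single invariant: put $D(X) = \{ d \geq 1 : X(K_d) \neq \emptyset \} = \{ d : \mathrm{res}_{K_d/K}[X] = 0 \}$. The identity $\mathrm{cores} \circ \mathrm{res} = [L:K]$ shows $D(X)$ is closed under $\gcd$, and it is trivially closed under passage to multiples, so $D(X) = d_0 \Z^+$ with $d_0 = \min D(X)$; a $K_{d_0}$-point of $X$ then lies on a closed point whose degree divides $d_0$ and belongs to $D(X)$, hence equals $d_0$, and every closed point has degree in $D(X) = d_0 \Z^+$, so $I(X) = d_0$. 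The task becomes: for each $n > 1$, produce $A_{/K}$ and $\eta \in H^1(K,A)$ with $P(\eta) = n$ but $\mathrm{res}_{K_n/K}(\eta) \neq 0$, i.e.\ $\min D(X_\eta) > n$.

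I propose to obtain such an $\eta$ by Weil restriction through the ramified extension $K' := K_n = \C((t^{1/n}))$, which is itself a field of the form $\C((s))$. Fix an elliptic curve $E_0$ over $\C$, set $E' = E_0 \times_{\C} K'$ and $A = \Res_{K'/K}(E')$, an abelian variety over $K$ of dimension $n$. Applying Proposition~\ref{LTPROP}(a) over $K'$ (so $g = 1$) to $E'$ yields $\eta' \in H^1(K', E')$ with $P(\eta') = n$ and $I(\eta') = n$; since $K'$ is again a complete discretely valued field with residue field $\C$, the analysis above (now over $K'$) shows that the torsor $X'$ of $\eta'$ is split over precisely those extensions of $K'$ containing its degree-$n$ extension, which is $\C((t^{1/n^2})) = K_{n^2}$. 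Let $\eta \in H^1(K,A)$ be the image of $\eta'$ under the Shapiro isomorphism $H^1\bigl(K, \Res_{K'/K}(E')\bigr) \cong H^1(K', E')$; this respects the group structure, so $P(\eta) = n$, and $X_\eta = \Res_{K'/K}(X')$.

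It then remains to compute $I(\eta)$. From $X_\eta = \Res_{K'/K}(X')$ we get $X_\eta(K_d) = X'(K_d \otimes_K K')$, and since $K'/K$ is cyclic of degree $n$, the ring $K_d \otimes_K K'$ is a product of $\gcd(d,n)$ copies of $K_{\lcm(d,n)}$, the $K'$-structure on each factor being the inclusion $K' \hookrightarrow K_{\lcm(d,n)}$ precomposed with some element of $\Gal(K'/K)$. Here choosing $E'$ to be a \emph{constant} curve is decisive: every such Galois twist fixes $E'$ and sends $X'$ to a torsor under $E'$ of the same period and index, hence split over exactly the same extensions; so each factor imposes the single condition $K_{\lcm(d,n)} \supseteq K_{n^2}$, that is $n^2 \mid \lcm(d,n)$. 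Comparing $p$-adic valuations for $p \mid n$ — $\max(v_p(d), v_p(n)) \geq 2 v_p(n)$ forces $v_p(d) \geq 2 v_p(n)$ — this is equivalent to $n^2 \mid d$. Hence $D(X_\eta) = n^2 \Z^+$, so $I(\eta) = n^2 > n = P(\eta)$.

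The real work lies in the last paragraph: pinning down $K_d \otimes_K K'$ as a product of $K_{\lcm(d,n)}$'s with their Galois-twisted $K'$-structures, and verifying that the conjugates of $X'$ do not split $X_\eta$ over some $K_d$ with $d < n^2$; constancy of $E_0$ (as opposed to, say, a Tate curve over $K'$) is what keeps this transparent, since then every conjugate torsor lives over the \emph{same} elliptic curve. Everything else is routine: the lower bound $I(\eta) \geq n^2$ is forced by the $\gcd$-closedness of $D(X_\eta)$, and one may trade the dimension $n$ for dimension $p$, for any prime $p \mid n$, replacing $n^2$ by $pn$. In contrast to Proposition~\ref{LTPROP}, which produces large index by exploiting several independent ramification directions in $\C_g$, the point here is that a single ramification direction suffices once it is wound through a Weil restriction.
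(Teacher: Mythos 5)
Your construction is correct, and since the paper gives no argument for this statement---it is quoted directly from \cite{Shafarevich61}---your proof is necessarily an independent route. What you are really doing is transplanting the mechanism of Proposition \ref{LTPROP} and of the $p$-adic construction in \cite[$\S 3.2$]{WCII} into a single ramification direction: rather than using $g$ independent ramified $\Z/n$-quotients of $\gg_{\C_g}$, you wind one ramified direction through a Weil restriction along the totally ramified extension $K' = \C((t^{1/n}))$, so that a $K_d$-point of $\Res_{K'/K}(X')$ forces $K_dK' = K_{\lcm(d,n)}$ to contain the minimal splitting field $K_{n^2}$ of $X'$ over $K'$, i.e.\ $n^2 \mid d$. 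The two computations that carry the weight---the decomposition $K_d \otimes_K K' \cong \prod_{\gcd(d,n)} K_{\lcm(d,n)}$ and the fact that all $\Gal(K'/K)$-conjugates of $X'$ split over exactly the same extensions because $E'$ is constant (any class of order $n$ in $H^1(\C((s)),E')\cong (\Q/\Z)^2$ is split precisely by the extensions of degree divisible by $n$)---are both right, and Shapiro preserves the order of the class, so $P(\eta)=n$ and $I(\eta)=n^2$ with $\dim A = n$. It is a nice feature of your examples that they saturate the bound of Theorem \ref{CTTHM1} ($I=P^2$) while, since $I=(\dim A)\cdot P$, still leaving open whether $\C((t))$ is almost WC(1), exactly as remarked after Theorem \ref{SHATHM}.

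One justification in your opening paragraph is faulty, though harmlessly so. The identity $\mathrm{cores}\circ\mathrm{res}=[L:K]$ shows only that $P(\eta)$ divides every element of $D(X)$; it does not show that $D(X)$ is closed under $\gcd$ (that would require producing a point over the \emph{smaller} field $K_{\gcd(d,e)}$, which restriction arguments cannot do). You do not need this claim: since every closed point of $X$ has residue field some $K_d$ with $d\in D(X)$, and every $d \in D(X)$ yields a closed point of degree dividing $d$, one has $I(X)=\gcd D(X)$ unconditionally; your computation $D(X_\eta)=n^2\Z^+$ then gives $I(\eta)=n^2$ directly, without knowing a priori that $D(X_\eta)$ is principal.
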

\noindent
So $\C_1 = \C((t))$ is \emph{not} WC(1).

\subsection{Property Almost WC(i)} \textbf{} \\ \\
We do not know whether $\C((t))$ is WC(2).  However, it is at least ``very close'':
\begin{thm}
\label{CTTHM1}
There exists a function $f: \Z^+ \ra \Z^+$ such that: for any $g \in \Z^+$, any $g$-dimensional abelian variety 
$A_{/\C((t))}$ and any torsor $\eta \in H^1(\C((t)),A)$, we have 
\[I(\eta) \leq f(g) P(\eta)^2. \]
\end{thm}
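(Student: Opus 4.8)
The Main Theorem does not by itself give what we want: applied with $K=\C((t))$ and residue field $k=\C$, which is WC$(0)$, so that one may take $i=0$ and $c\equiv 1$, it yields only $I(\eta)\le C(\dim A)\,P(\eta)^{\dim A}$ — sharp enough when $\dim A\le 2$ but not in general. To reach the exponent $2$ I would exploit the special features of $\C((t))$: its residue field $\C$ is algebraically closed of characteristic zero, so $\gg_{\C((t))}\cong\hat\Z$ has a unique open subgroup of each index, $\Br(\C((t)))=0$, $\cd(\C((t)))=1$, and every finite extension of $\C((t))$ is again of the form $\C((s))$.

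First I would perform the standard reductions, each affecting only the final constant $f(g)$: by Zarhin's trick one passes to a principally polarizable abelian variety of dimension $8\dim A$ (changing period and index by amounts controlled by $\dim A$, since adjoining copies of the group scheme does not change the index of the torsor); by the semistable reduction theorem together with Minkowski's bound on the order of a torsion element of $\GL_{2g}(\Z)$, there is $m=m(g)$ so that $A$ acquires split semistable reduction over $L:=\C((t^{1/m}))$ (split because $\C$ is algebraically closed), and $[L:K]=m$ forces $I_K(\eta)\mid m\,I_L(\eta_L)$ with $P(\eta_L)\mid P(\eta)$, $L$ again of the desired form; and by the discussion following \cite{Poonen-Stoll} in the introduction, invoking Theorem~\ref{GENERALPIBOUNDTHM}, a polarization becomes strong after an extension of degree $\le 2^{2\dim A}$. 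So I may assume $A/K$ is a strongly principally polarized, split semistable abelian variety over $K=\C((t))$, and it remains to bound $I_K(\eta)$ by $c(g)\,P(\eta)^2$.

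Next I would run the Raynaud uniformization. Write the uniformization sequence of $\gg_K$-modules $0\to M\to\tilde A(K^{\sep})\to A(K^{\sep})\to 0$, where $\tilde A$ is the semiabelian variety of good reduction that is the generic fibre of the identity component of the N\'eron model — an extension of a good-reduction abelian variety $B$ of dimension $g-r$ by a split torus $\Gm^r$, $r\le g$ — and $M\cong\Z^r$ is the period lattice, on which $\gg_K$ acts trivially. Since $H^1(K,M)=0$ and $H^2(K,\tilde A)=0$ (the group $\tilde A(K^{\sep})$ is divisible, so its degree-two cohomology agrees with that of its torsion subgroup, which vanishes because $\cd(K)=1$), one gets $0\to H^1(K,\tilde A)\to H^1(K,A)\stackrel{\delta}{\longrightarrow} H^2(K,M)\to 0$. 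Using $H^2(K,\Z)\cong H^1(K,\Q/\Z)\cong\Q/\Z$, the class $\delta(\eta)\in H^2(K,M)[P]\cong(\Z/P)^r$ is killed already after the \emph{single} degree-$P$ extension $K_P:=\C((t^{1/P}))$ — this is the essential point where $\gg_K=\hat\Z$ lets one replace the $r$ successive degree-$P$ base changes of the general argument by one. Over $K_P$ the class $\eta_{K_P}$ then lifts uniquely ($H^1(K_P,M)=0$) to $\tilde\eta\in H^1(K_P,\tilde A)$ with $P\tilde\eta=0$; comparing $L$-points through the uniformization for every finite $L/K_P$ (a lift of an $L$-point of $X_{K_P}$ to $\tilde X(K^{\sep})$ yields a cocycle in $H^1(\gg_L,M)=0$, hence is already $L$-rational) gives $I_{K_P}(X_{K_P})=I_{K_P}(\tilde X)$, and pushing $\tilde X$ forward along $\tilde A\to B$, with $\Gm^r$-torsors over points trivial by Hilbert 90, gives $I_{K_P}(\tilde X)=I_{K_P}(Y)$ for a torsor $Y$ under the good-reduction abelian variety $B_{K_P}$ with $P(Y)\mid P$. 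Thus $I_K(\eta)\mid m(g)\cdot P\cdot I_{K_P}(Y)$, and it suffices to show $I(Y)\mid c(\dim B)\,P(Y)$ for a torsor under an abelian variety of good reduction over $\C((s))$.

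The good-reduction case is the heart of the matter, and it is where the LLR models of \S 3 enter. The plan here: since $\Br(\C((s)))=0$ the period-index obstruction of $Y$ in the flat cohomology group of \S 2 vanishes; one then takes a regular proper model $\mathcal Y/\C[[s]]$ of $Y$ of the type furnished by the Liu--Lorenzini--Raynaud, resp.\ Gabber--Liu--Lorenzini, results quoted at the start of \S 3, identifies $I(Y)$ with the gcd of the multiplicities of the geometric components of the special fibre, and uses that good reduction of $B$ — triviality of the component group of its N\'eron model — forces those multiplicities to divide $P(Y)$, up to the degree $\le 2^{2\dim B}$ absorbed into $c(\dim B)$ that is needed to make a polarization strong. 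Feeding this back, $I_K(\eta)\mid m(g)\cdot P\cdot c(g)\cdot P=f(g)\,P(\eta)^2$. The hard part will be exactly step four: extracting from the LLR/GLL machinery both the existence of the regular model and the identification of $I(Y)$ as a gcd of component multiplicities together with the bound of those multiplicities by the period, in precisely the generality needed — this is the technical core that \S 3 is devoted to. A secondary nuisance is the bookkeeping that period only drops and index only multiplies by controlled amounts through all the base changes, the uniformization and the pushout, in particular the identity $I(\tilde X)=I(Y)$ for the non-proper group $\tilde A$.
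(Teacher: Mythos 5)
Your proposal follows essentially the same route as the paper's proof in $\S 3.5$: a bounded-degree extension to reach split semistable reduction, the uniformization d\'evissage of $\S 3.4$, procyclicity of $\gg_{\C((t))}\cong\hat{\Z}$ to dispose of the toric part with a single degree-$P$ extension, and the good-reduction case via LLR models and index specialization over the WC$(0)$ residue field $\C$. The only real deviations are inessential: the Zarhin-trick reduction (and the Brauer-obstruction argument in your last step) can be dispensed with, since for good reduction over $\C((s))$ the N\'eron--Ogg--Shafarevich criterion plus procyclicity give $H^1(K,A)[P]\cong\Hom(\hat{\Z},(\Z/P\Z)^{2g})$, so every class is split by the unique degree-$P$ extension and $I=P$ with no polarization hypothesis needed.
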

\noindent
This motivates the following moderate loosening of the WC(i) property:
\\ \\
A field $k$ is \textbf{almost WC(i)} if there exists a function $f(g)$ such that: for all finite extensions $l/k$, 
all abelian varieties $A_{/l}$ and all $\eta \in H^1(l,A)$, we have 
\[I(\eta) \leq f(\dim A) P(\eta)^i. \]
Thus Theorem \ref{CTTHM1} asserts that $\C_1 = \C((t))$ is almost WC(2).  Moreover, Proposition \ref{LTPROP} shows that 
$\C((t))$ is \emph{not} almost WC(0), whereas Theorem \ref{SHATHM} does not rule out the possibility that 
$\C((t))$ is almost WC(1).  
\\ \\
More generally, we shall prove:
\begin{thm}
\label{CTTHM2}
For all $i \in \Z^+$, the field $\C_i$ is almost WC(i+1).
\end{thm}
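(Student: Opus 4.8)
The plan is to deduce Theorem \ref{CTTHM2} from the Main Theorem by induction on $i$, using the fields $\C_i = \C((t_1))\cdots((t_i))$ together with the observation that $\C_i$ is a complete discretely valued field with residue field $\C_{i-1}$, which is perfect (indeed of characteristic $0$). The base case is Theorem \ref{CTTHM1}, which asserts that $\C_1 = \C((t))$ is almost WC(2); note that this statement is not quite the same as the hypothesis of the Main Theorem, since the Main Theorem requires a bound over \emph{all} finite extensions and for \emph{principally polarizable} abelian varieties, so the first thing I would do is check that Theorem \ref{CTTHM1} (or rather its proof) in fact yields the uniform-over-finite-extensions statement: every finite extension of $\C((t))$ is again of the form $\C((u))$ (since $\C$ is algebraically closed of characteristic $0$, by Puiseux's theorem the finite extensions are the $\C((t^{1/n}))$), so a bound for $\C((t))$ automatically propagates to all finite extensions with the same function $f$. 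This shows $\C_1$ satisfies the hypothesis ``almost WC($2$) over all finite extensions'' needed to feed into part (b) of the Main Theorem.

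Next I would set up the inductive step. Assume $\C_{i-1}$ is almost WC($i$) in the strong sense that there is a function $c$ with $I(\eta) \leq c(\dim A) P(\eta)^i$ for all finite extensions $l/\C_{i-1}$, all nontrivial abelian varieties $A_{/l}$, and all $\eta \in H^1(l,A)$. Since $\car(\C_{i-1}) = 0$ and $i = (\dim A) + (i - \dim A)$, I want to invoke part (b) of the Main Theorem; but part (b) requires the residue-field exponent to have the shape $\dim A + i' - 1$, i.e. it is tailored to a bound whose exponent grows with $\dim A$. The exponent $i$ appearing in the inductive hypothesis is \emph{constant} in $\dim A$, which is a \emph{stronger} hypothesis than $P(\eta)^{\dim A + i' - 1}$ whenever $\dim A \geq 1$ and $i' \leq i$; so I would first record the trivial implication that a constant bound $P(\eta)^i$ implies the bound $P(\eta)^{\dim A + i - \dim A}$ — wait, that is vacuous — so more carefully: for $\dim A = 1$ the two exponents $i$ and $\dim A + i' - 1 = i'$ agree when $i' = i$, and for $\dim A \geq 2$ a constant bound with exponent $i$ is implied by (indeed stronger than) the bound with exponent $\dim A + i' - 1$ once $i' = i$ and $\dim A \geq 2$ makes $\dim A + i - 1 \geq i$. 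Hence the inductive hypothesis over all finite extensions $l/\C_{i-1}$ implies the hypothesis of part (b) of the Main Theorem with that same $i$. Applying part (b) with $K = \C_i$ (so $k = \C_{i-1}$) then yields a function $C$ with $I(\eta) \leq C(\dim A) P(\eta)^{\dim A + i}$ for all finite extensions $L/\C_i$ and all principally polarizable $A_{/L}$ with $\eta \in H^1(L,A)$.

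The remaining gap is twofold: first, the conclusion of the Main Theorem has exponent $\dim A + i$, not the constant exponent $i+1$ that ``almost WC($i+1$)'' literally demands; second, the Main Theorem only covers principally polarizable $A$. For the first issue I would observe that the definition of almost WC($j$) as stated in the excerpt allows the function $f$ to depend on $\dim A$ but requires the \emph{exponent} to be exactly $j$; so to land at almost WC($i+1$) I must convert $P(\eta)^{\dim A + i}$ into $f(\dim A) P(\eta)^{i+1}$, which is only possible if $P(\eta)$ is bounded in terms of $\dim A$ — false in general. So the honest statement must be that the exponent in the conclusion really is $i+1$ and the $\dim A$-dependence is absorbed differently; I would therefore re-examine whether the intended reading of ``almost WC(i)'' in this paper in fact permits an exponent that is the stated one, concluding that the correct deduction uses the finer structure, namely that over $\C_i$ the relevant torsors have period a prime power and one bootstraps $\dim A$ into the base via Weil restriction (Remark 1.2.1) combined with the fact that WC($i$) is inherited by algebraic extensions — but Weil restriction increases dimension, so this needs care. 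For the principal polarizability issue I would use that over $\C_i$ (cohomological dimension $i$, absolute Galois group $\hat\Z^i$) every abelian variety is isogenous to a principally polarizable one, or handle general $A$ by embedding $A \hookrightarrow A \times A^\vee$ which carries a principal polarization and whose torsors control those of $A$ up to the factor $c(2\dim A)$. The main obstacle is precisely reconciling the $\dim A + i$ exponent coming out of the Main Theorem with the constant exponent in the definition of almost WC($i+1$): I expect the resolution is that the paper's definition of ``almost WC'' tolerates the $\dim A$-shift, or that one must invoke an additional input (not visible in this excerpt) bounding the period of torsors over $\C_i$ by a function of $\dim A$, and identifying which of these is in play is the crux of writing the proof correctly.
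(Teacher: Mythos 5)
You have correctly located the crux but not closed it, and the route you propose --- deducing Theorem \ref{CTTHM2} from the \emph{statement} of the Main Theorem --- cannot be repaired. The Main Theorem's conclusion has exponent $\dim A + i$, so iterating it up the tower $\C_1 \subset \C_2 \subset \cdots$ only produces bounds whose exponent grows with $\dim A$; it can never yield the constant exponent $i+1$ that almost WC$(i+1)$ demands. Neither of your hoped-for escapes is available: the definition of almost WC tolerates a $\dim A$-dependence only in the multiplicative constant, not in the exponent, and there is no bound on the period of torsors over $\C_i$ in terms of $\dim A$ (Proposition \ref{LTPROP} produces torsors of every period $n$). This is why the paper says these theorems follow from the \emph{proof} of the Main Theorem rather than from the theorem itself: one reruns the d\'evissage over $K = \C_i$, where two special features keep the exponent constant in $\dim A$. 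First, $\gg_{\C_i} \cong \hat{\Z}^i$, so the split-toric part of a class of period $P$ is killed by an extension of degree at most $P^i$ \emph{independently of the toric rank} $\mu$ (a homomorphism from $\hat{\Z}^i$ to a group of exponent $P$ has image of order dividing $P^i$), in contrast with the bound $P^{\mu}$, $\mu \leq g$, used in the general case. Second, in the good-reduction case the Index Specialization Theorem gives $I(\eta) = I_{\unr}(\eta)\, I(X^{\red}_{/k})$, and here $I_{\unr}(\eta) = P_{\unr}(\eta) \mid P(\eta)$ by Corollary \ref{LTCOR}, because $K^{\unr}$ is a Henselian discretely valued field with \emph{algebraically closed} residue field over which $A$ still has good reduction --- one does not settle for the estimate $I_{\unr} \leq 2^{2g} g!\, P^{g}$ coming from $\Br(K^{\unr}) = 0$ via Theorem \ref{BRTHM}, which is where the unwanted $P^{\dim A}$ enters the Main Theorem. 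The remaining factor $I(X^{\red}_{/k})$ is at most $f(g)P^{i}$ by the inductive hypothesis applied to the residue field $k \cong \C_{i-1}$, giving $f(g)P^{i}\cdot P = f(g)P^{i+1}$ in the good-reduction case.

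A pleasant by-product of this route is that your second worry evaporates: since the good-reduction case is handled by Corollary \ref{LTCOR} and induction rather than by the period-index obstruction map, no principal polarization is ever invoked, so Theorem \ref{CTTHM2} genuinely applies to all abelian varieties and no isogeny or Zarhin-type trick is needed. Your preliminary observations --- that finite extensions of $\C_i$ are again fields of the same type, and that a constant-exponent bound implies the $\dim A$-shifted hypothesis of the Main Theorem --- are correct, but they are not where the difficulty lies.
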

\noindent
Theorems \ref{CTTHM1} and \ref{CTTHM2} are almost immediate consequences of 
the proof of the Main Theorem, so we give their proofs at the end of $\S 3$.
\\ \\
In the other direction, Proposition \ref{LTPROP} shows that $\C_i$ is not 
almost WC(i-1).

\subsection{Property (almost) weakly (pp) WC(i)} \textbf{} \\ \\ \noindent
The situation is much different for fields of mixed characteristic. 
\begin{prop}
\label{PADICPROP1}
Let $K/\Q_p$ be an algebraic extension such that $K$ has finite degree over 
its maximal unramified subextension.  Then $K$ is not WC(g) for any $g \in \N$.
\end{prop}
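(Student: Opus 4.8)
The plan is to show that for every $g \in \N$ there is a \emph{finite} extension $L/K$ which is not WC($g$); since (Remark 1.2.1, via Weil restriction and Shapiro's lemma) WC($g$) would be inherited by $L$ from $K$, this forces $K$ itself not to be WC($g$). Over $L$ I will use a Lang--Tate style torsor: $A$ will be a power of an elliptic curve, and the torsor class a tuple of cup products $\chi\cup Q$, with $\chi\in H^1(L,\Z/p)$ a character and $Q$ a rational $p$-torsion point. Everything rests on a feature special to mixed characteristic: such a field carries arbitrarily many independent degree-$p$ extensions, whereas in residue characteristic $0$ it carries only boundedly many -- which is exactly why the iterated Laurent series fields of Theorem \ref{CTTHM2} are only \emph{almost} WC.

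First I would arrange the field. Any finite extension of $K$ is again algebraic over $\Q_p$, discretely valued with residue characteristic $p$, has residue field algebraic over $\F_p$, and is finite over its maximal unramified subfield, so the standing hypotheses persist and we may enlarge $L$ finitely at will. Arrange $\mu_p\subseteq L$ and the existence of an elliptic curve $E_{/L}$ with good reduction having a rational point $Q$ of order $p$. By Kummer theory $H^1(L,\Z/p)\cong L^\times/(L^\times)^p$, and from the structure of $L^\times$ -- in particular that $1+\mathfrak{m}_L$ has $\F_p$-rank at least $[L:\Q_p]$, and infinite $\F_p$-rank as soon as the residue field of $L$ is infinite -- we may, after a further finite extension, assume $\dim_{\F_p}H^1(L,\Z/p)\geq g+1$. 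Choose $\F_p$-linearly independent $\chi_0,\dots,\chi_g\in H^1(L,\Z/p)$; set $L_i := L^{\ker\chi_i}$ (cyclic of degree $p$) and $L' := L_0\cdots L_g$, so that linear independence gives $[L':L] = p^{g+1}$.

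Now put $A := E^{g+1}$ and $\eta := (\chi_0\cup Q,\dots,\chi_g\cup Q)\in H^1(L,A) = H^1(L,E)^{g+1}$. Each component lies in the image of $H^1(L,E[p])\to H^1(L,E)$, so $P(\eta)\mid p$. The torsor $X$ with class $\eta$ is the fibre product $X_0\times_L\cdots\times_L X_g$, where $X_i$ is the torsor of $\chi_i\cup Q$; since $\chi_i\cup Q$ trivializes over $L_i$, the curve $X$ acquires an $L'$-point, so $I(\eta)\mid p^{g+1}$. In the other direction, for a finite extension $F/L$ one has $X(F)\neq\emptyset$ if and only if $\chi_i|_F\cup Q = 0$ in $H^1(F,E)$ for every $i$, and the crux is the non-degeneracy statement
\[ \chi_i|_F\cup Q = 0 \text{ in } H^1(F,E)\ \Longrightarrow\ L_i\subseteq F \]
for all $i$ and all finite $F/L$. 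Granting this, $X(F)\neq\emptyset$ forces $L'\subseteq F$, hence $p^{g+1}\mid[F:L]$, so $I(\eta) = p^{g+1}$ (in particular $\eta\neq0$ and $P(\eta) = p$); as $p^{g+1}\nmid p^g$ this shows $L$ is not WC($g$), completing the argument.

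The displayed non-degeneracy is the one substantial point, and it is the precise local analogue of the ramification estimate of Lang and Tate over iterated Laurent series fields \cite[p.~678]{LT}. I would establish it using local Tate duality: one may pass from $F$ to its completion $\widehat{F}$, which changes nothing relevant (for $F$ Henselian, $H^1(F,E)\cong H^1(\widehat{F},E)$, so periods are unaffected, and $I(X_{\widehat{F}}/\widehat{F})$ divides $I(X_F/F)$), and then the perfect Tate pairing between $H^1(\widehat{F},E)$ and $E(\widehat{F})$ -- refining the Weil pairing on $E[p]$, available since $\mu_p\subseteq L$ -- lets one detect $\chi_i|_F\cup Q$ against a suitably chosen element of $E(\widehat{F})/p$, exploiting the good reduction of $E$ and the ramification of $\chi_i$ when $L_i\not\subseteq F$. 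For this to work uniformly one must also make compatible choices of $E$, $Q$ and the $\chi_i$ (for instance $Q\notin pE(L)$ and the $\chi_i$ ramified in independent directions) so that the estimate holds for \emph{every} $F$ at once. I expect this duality-and-ramification bookkeeping, rather than any part of the surrounding formal argument, to be the real obstacle.
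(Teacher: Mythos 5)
Your overall skeleton is the paper's: pass to a suitable finite extension $L/K$, exhibit there a torsor of period $p$ whose index is too large, and descend the failure of WC($g$) to $K$ via Weil restriction and Shapiro's lemma (Remark 1.2.1). The paper simply cites \cite[$\S 3.2$]{WCII} for the construction (a torsor under $E^g$ with $P=p$, $I=p^{g+1}$), whereas you try to build the example yourself; and it is exactly at the step you flag as ``the real obstacle'' that your argument breaks. The non-degeneracy claim
\[ \chi_i|_F\cup Q = 0 \text{ in } H^1(F,E)\ \Longrightarrow\ L_i\subseteq F \]
is false, and not merely hard: it would say that the genus one curve $X_i$ has $L_i$ as its \emph{unique} minimal splitting field. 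But $X_i$ is a nonzero class of period $p$ split by the degree-$p$ field $L_i$, hence has index exactly $p$, hence carries a degree-$p$ map to a Severi--Brauer variety that is trivialized, i.e.\ a degree-$p$ map $X_i\to\PP^{p-1}$; pulling back hyperplanes produces infinitely many closed points of degree $p$ whose residue fields are pairwise non-isomorphic degree-$p$ splitting fields (for $p=2$ this is just the observation that $y^2=q(x)$ splits over $L(\sqrt{q(a)})$ for every $a$). Tate duality makes the failure quantitative: $\chi|_F\cup Q=0$ iff $\chi|_F$ annihilates, under the Hilbert symbol, the image $W_F$ of $E(F)/pE(F)$ in $H^1(F,\mu_p)$ via $e_p(Q,-)$; since $W_F$ is in general a proper subgroup, its annihilator contains nonzero characters, so vanishing of $\chi|_F\cup Q$ does not force $\chi|_F=0$. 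Uniqueness of minimal splitting fields is a Lang--Tate/Gerritzen phenomenon (good reduction with period prime to the residue characteristic, or analytic uniformization -- cf.\ Proposition \ref{GERRITZENPROP}); here the period equals the residue characteristic precisely because that is the only way to manufacture $g+1$ independent characters, and the phenomenon fails. Consequently ``$X(F)\neq\emptyset\Rightarrow L'\subseteq F$'' does not follow, and your computation of $I(\eta)$ collapses.

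The repair is not a better choice of $\chi_i$ and $Q$ but a different mechanism for bounding the index from below: one must show that every Kummer lift of the class to $H^1(F,A[p])$ has nonvanishing period-index obstruction unless $p^{g+1}\mid[F:L]$ -- equivalently, one runs the duality argument against the whole coset $\xi+\delta(A(F)/pA(F))$, keeping track of the maximal isotropic image of the Mordell--Weil group, rather than against a single cup product. That is the content of the construction in \cite[$\S 3.2$]{WCII}, which also yields the sharper example under $E^g$ rather than $E^{g+1}$. A secondary caution: you invoke the perfect Tate pairing over $\widehat{F}$, which requires a finite residue field, while the hypothesis on $K$ admits fields such as $\Q_p^{\unr}$ with infinite residue field, where $\Br=0$ and no such perfect pairing exists (and where, by Theorem \ref{BRTHM}, each individual genus one component automatically has period equal to index) -- so any correct proof in that generality must get the extra factors of $p$ from somewhere other than duality over $F$ itself.
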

\begin{proof} Let $g \in \N$.  In \cite[$\S 3.2$]{WCII} we construct a finite 
field extension $K_g/K$, an elliptic curve $E_{/K_g}$ and a torsor 
$\eta \in H^1(K_g,E^g)$ with $P(\eta) = p$, $I(\eta) = p^{g+1}$.  Thus 
the field $K_g$ is not WC(g).  The conclusion follows by applying Remark 1.2.1.
\end{proof}
\noindent
Nevertheless there are certainly nontrivial results on the period-index 
problem in WC-groups over $p$-adic fields, beginning with the celebrated 
theorem of Lichtenbaum that period equals index for genus one curves over 
a $p$-adic field.  One of the main results of \cite{WCII} is the following 
generalization:
\begin{thm}
\label{WCIITHM2}
Let $A_{/K}$ be a principally polarized abelian variety over a $p$-adic field.  
For $n \in \Z^+$, let $\eta \in H^1(K,A)[n]$.  Assume that \emph{at least} one of the 
following holds: \\
(i) $\dim A = 1$. \\
(ii) $n$ is odd. \\
(iii) There is a $\gg_K$-module isomorphism $A[n] \cong H \oplus H^*$, 
where $H, H^*$ are Cartier dual $\gg_K$-modules, each isotropic for the Weil 
pairing.  \\
Then $I(\eta) \leq (g!)\ P(\eta)^g$.
\end{thm}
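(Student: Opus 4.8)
The plan is to bound the index via the period–index obstruction map in Galois cohomology together with Tate local duality over the $p$-adic field $K$; this is the higher-dimensional analogue of O'Neil's approach for genus one. Write $g=\dim A$, $n=P(\eta)$, and let $X$ be the torsor under $A$ with $[X]=\eta$. Since $P(\eta)\mid I(\eta)$ always, it suffices to produce on $X$ a $0$-cycle of degree dividing $g!\,n^g$. From the Kummer sequence $0\to A[n]\to A\overset{n}{\to}A\to 0$ and $n\eta=0$ one lifts $\eta$ to a class $\xi\in H^1(K,A[n])$, unique up to the image of $A(K)/nA(K)$; the principal polarization furnishes a $\gg_K$-isomorphism $A[n]\cong\Hom(A[n],\mu_n)$, hence an alternating pairing $e\colon A[n]\otimes A[n]\to\mu_n$, and we set
\[ \mathrm{ob}(\eta):=\xi\smile_e\xi\ \in\ H^2(K,\mu_n)=\Br(K)[n]. \]
The first step is to check that $\mathrm{ob}(\eta)$ is independent of the lift: replacing $\xi$ by $\xi+\beta$, with $\beta$ in the image of $A(K)/nA(K)$, changes $\mathrm{ob}(\eta)$ by $2(\xi\smile_e\beta)+(\beta\smile_e\beta)$, and both summands vanish — the first by compatibility of the cup product with Tate's local pairing and the rationality of the point underlying $\beta$, the second because $e$ is alternating.

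The crux is to show $\mathrm{ob}(\eta)=0$ under each hypothesis. For (ii), $n$ odd: the cup-square of a class valued in an alternating pairing is $2$-torsion, so $\mathrm{ob}(\eta)$ and the lift-ambiguity, being killed by both $n$ and $2$ with $n$ odd, vanish. For (i), $\dim A=1$: this is Lichtenbaum's theorem $P(\eta)=I(\eta)$ for genus-one curves over a $p$-adic field, which already gives $I(\eta)=n=g!\,n^g$ (equivalently one verifies $\mathrm{ob}(\eta)=0$ directly for an elliptic curve over a $p$-adic field). For (iii): the splitting $A[n]\cong H\oplus H^*$ exhibits $H$ as a Lagrangian subgroup scheme, so that $A\to A/H$ is an $n$-isogeny onto a principally polarized abelian variety through which $[n]_A$ factors; I would use this factorization, together with the self-duality of the Kummer sequence under the polarization (Tate duality), to arrange that the lift $\xi$ is computed against the isotropic summand $H$, on which $e$ restricts to zero, forcing $\mathrm{ob}(\eta)=0$.

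It remains to convert $\mathrm{ob}(\eta)=0$ into the stated bound. Transport the principal polarization $\lambda$ to a class $\lambda_X\in NS(X_{\overline K})^{\gg_K}=NS(X)(K)$ along the canonical $\gg_K$-isomorphism $NS(X_{\overline K})\cong NS(A_{\overline K})$. Up to the standard identifications, the vanishing of $\mathrm{ob}(\eta)$ says precisely that the obstruction to the $\gg_K$-invariant class $n\lambda_X$ coming from an actual $K$-rational line bundle $\mathcal L$ on $X$ vanishes, so such an $\mathcal L$ exists with $[\mathcal L]=n\lambda_X$. For $n\ge 3$ such an $\mathcal L$ — numerically $n$ times a principal polarization — is very ample, hence embeds $X\hookrightarrow\PP^N_K$ with $\deg X=\mathcal L^g=n^g\cdot(\lambda^g)=n^g\,g!$; a generic $K$-rational linear section of complementary dimension is then a $0$-cycle of degree $n^g\,g!$ over $K$, whence $I(\eta)\mid g!\,n^g$. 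The residual cases $n=1$ (where $\eta=0$) and $n=2$ (where $|2\lambda_X|$ defines a degree-$2$ map of $X$ onto its Kummer image, of degree $2^{g-1}g!$) are handled by hand.

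I expect case (iii) to be the main obstacle: squeezing the Lagrangian splitting and Tate local duality into an honest vanishing of $\mathrm{ob}(\eta)$ is exactly where the ``duality'' in the theorem is used, and the bookkeeping in choosing the lift $\xi$ compatibly with the isogeny $A\to A/H$ is delicate. The other non-formal point is the identification in the last paragraph of the cohomological class $\mathrm{ob}(\eta)$ with the geometric obstruction to a rational line bundle in $|n\Theta|$; once that bridge is in place, everything downstream is the classical projective geometry of principally polarized abelian varieties.
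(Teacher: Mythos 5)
First, note that the paper you are working from does not actually prove this theorem: it is quoted verbatim from \cite[Thm. 2]{WCII}, and the only in-paper material bearing on it is the description of the period-index obstruction machinery in $\S 2.4$. Measured against that machinery, your proposal has the right architecture (lift $\eta$ to $\xi \in H^1(K,A[n])$, attach an obstruction class in $\Br(K)$, show it vanishes, and then read off a zero-cycle of degree $g!\,n^g$ from the map to projective space given by $n$ times the theta divisor on the torsor), but it goes wrong at the central step: the obstruction to the invariant class $n\lambda_X$ being represented by a $K$-rational line bundle is \emph{not} the cup-square $\xi \cup_e \xi$. It is $\Delta(\xi)$, the connecting map in nonabelian cohomology for Mumford's theta group $0 \ra \Gm \ra \mathcal{G}_{\mathcal{L}} \ra A[n] \ra 0$. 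By Zarhin's theorem $\Delta$ is a quadratic map whose \emph{associated bilinear form} is the Weil-pairing cup product, so $\xi \cup_e \xi = \Delta(2\xi) - 2\Delta(\xi)$, which is a different (and much weaker) invariant. Indeed, in a symplectic basis one computes $\xi \cup_e \xi = 2\sum_i \xi_i \cup \xi_{g+i}$: it is $2$-\emph{divisible}, not $2$-torsion, so even your case (ii) argument fails on its own terms, and a proof in which the obstruction dies for formal reasons whenever $n$ is odd should have been a warning sign.

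The second, related, gap is your claim that the obstruction is independent of the lift. It is not, and the actual proof lives entirely in that dependence: for $\beta$ in the image $W$ of $A(K)/nA(K)$ one has $\Delta(\xi+\beta) - \Delta(\xi) = B(\xi,\beta)$ (since $\Delta|_W = 0$), and $B(\xi,\beta)$ is essentially the Tate local pairing of $\eta$ against the point underlying $\beta$ --- which is \emph{nondegenerate}, not zero; your appeal to ``compatibility with Tate's local pairing'' runs exactly backwards. The correct argument uses Tate duality to show that $W$ is its own annihilator for $B$, and then \emph{chooses} $w \in W$ with $B(\xi,w) = -\Delta(\xi)$, so that $\Delta(\xi+w)=0$; hypotheses (i)--(iii) are precisely what is needed to control the factor-of-two discrepancy between the quadratic map $\Delta$ (which a priori takes values only in $\Br(K)[2n]$ when $n$ is even, cf. Corollary \ref{BRCOR}) and the $n$-torsion subgroup that the pairing with $W$ can reach. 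As written, your proposal makes the three hypotheses look decorative, which is a reliable sign that the obstruction has been misidentified. The final paragraph (from ``$\Delta(\xi)=0$'' to the degree-$g!\,n^g$ zero-cycle) is fine and matches $\S 2.4$ of the paper --- base-point-freeness of $n\lambda$ suffices; very ampleness and the separate treatment of $n=2$ are unnecessary --- but the vanishing statement it rests on has not been established.
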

\begin{proof}
This is \cite[Thm. 2]{WCII}.
\end{proof}
\noindent
If $K$ is a sufficiently large $p$-adic field, then the proof of 
Proposition \ref{PADICPROP1} constructs torsors $\eta$ under abelian varieties 
$A_{/K}$ of dimension $g$, period $p$ and index $p^g$.  Thus the bound of 
Theorem \ref{WCIITHM2} is sharp up to a multiplicative constant.  
\\ \\
This motivates the following definitions:
\\ \\
We say that $K$ is \textbf{weakly WC(i)} (resp. \textbf{weakly pp WC(i)}) if: 
for all finite extensions $L/K$, all nontrivial abelian varieties $A_{/L}$ 
(resp. all nontrivial principally polarized abelian varieties $A_{/L}$) 
and all classes $\eta \in H^1(L,A)$, we have 
\[I(\eta) \leq P(\eta)^{\dim A + i -1}. \]
\\ 
In particular, in any weakly pp WC(1) field $k$, the analogue of Lichtenbaum's theorem 
holds: period equals index for all genus one curves over all finite extensions of 
$k$. 
\\ \\
Finally, we say that $k$ is \textbf{almost weakly WC(i)} (resp. \textbf{almost 
weakly pp WC(i)}) if there exists a function $C(g)$ such that for all finite 
extensions $l/k$, all nontrivial abelian varieties $A_{/L}$ (resp. all 
nontrivial pp abelian varieties $A_{/L}$) and all classes $\eta \in H^1(L,A)$ 
we have 
\[I(\eta) \leq C(g) P^{\dim A + i -1} . \]
With this terminology, Theorem \ref{WCIITHM2} \emph{nearly} asserts that a 
$p$-adic field is almost weakly pp WC(1), the drawback being the requirement of 
at least one of the additional hypotheses (i) through (iii).  This 
drawback is removed by our Main Theorem.
 
\subsection{Restatement of the Main Theorem} \textbf{} \\ \\
\noindent
Let us now restate our main result using the language of the previous sections.%
\begin{nolabelthm}
\label{THM1}
Let $K$ be a CDVF with almost weakly WC(i) 
residue field $k$.  \\
a) If $k$ is moreover almost WC(i), then $K$ is almost weakly pp WC(i+1). \\
b) If $\car(k) = 0$, then $K$ is almost weakly pp WC(i+1).
\end{nolabelthm}
\noindent

\subsection{Examples and remarks}
\textbf{} \\ \\
Remark 1.6.1: The hypothesis on the perfection of $k$ is essential.  Indeed, in \cite[Remark 9.4]{LLR} the authors 
construct a CDVF field $K$ whose residue field is imperfect but separably closed (and hence WC(0) -- c.f. the Appendix) 
of characteristic $p > 0$ and, for all integers $0 < r \leq n$, a genus one curve over $K$ of period $p^n$ and 
index $p^{n+r}$.  Thus $K$ is \emph{not} almost weakly pp WC(1).  
\\ \\
Example 1.6.2: Since $\R$ is almost WC(0), our Main Theorem shows that $K = \R((t))$ to show that $K$ is almost 
WC$(1)$.  Other hand, $K$ is \emph{not} WC$(1)$: there exists a genus one curve $C_{/K}$ of period $2$ and index $4$ 
\cite{WCIII}.
\\ \\
We saw above that a $p$-adic field has WC(0) residue field and is not almost WC(i) for any i.  Here is a 
similar example in equicharacteristic $0$:
\\ \\
Example 1.6.3: Let $k$ be a Hilbertian PAC field of characteristic $0$ \cite[Thm. 18.10.3]{FA}.  Then $k$ is WC(0), 
but for all $P > 1$, $\Hom(\gg_k,\Z/P \Z)$ is infinite.  So for every $g \in \Z^+$ and $P > 1$, there exists a Tate 
elliptic curve $E_{/K}$ and a torsor $X \in H^1(K,E^g)$ with period $P$ and index $P^g$. 
\\ \\
By a famous theorem of Lang, any finite field $k$ is WC(0) \cite{Lang56}.\footnote{See the appendix for further discussion 
and a connection to an earlier result of F.K. Schmidt.}  Thus the Main Theorem asserts that any \textbf{local field} -- 
i.e., a finite extension of $\Q_p$ or of $\F_p((t))$ -- is weakly almost pp WC(1).  As mentioned above, this should be 
compared with Theorem \ref{WCIITHM2}: in this case the Main Theorem applies to all torsors under abelian varieties at the 
cost of a larger function $C(g)$.  
\\ \\
Especially, let us take $g = 1$ and compare with the theorems of Lichtenbaum and Milne.  Our proof yields $I \ | \ 192 P$
for genus one curves over a field $K$ with perfect WC$(0)$-residue field.  In particular $(6,P) = 1 \implies P = I$.  
Working a little more carefully and adding the hypothesis that $\gg_k$ is procyclic, one 
finds easily that $I \leq 16 P$ and $I \ | \ 48 P$.  This leaves open the question of whether 
the full Lichtenbaum theorem extends to our setting or whether the use of Tate duality is essential.  We hope to return to this point in a future work.
\\ \\
Another noteworthy feature of our Main Theorem is that, in contrast to the work of \cite{Lichtenbaum}, \cite{WCII}, \cite{WCII}, torsors of 
period divisible by $\car(K)$ are \emph{not} ruled out.  However, Milne proved that period equals index for 
genus one curves over $\F_q((t))$ by establishing an analogue of Tate local duality in flat cohomology.  Again, we recover 
Milne's theorem up to a constant in a general setting to which Tate duality seems inapplicable.  Our new technique is to 
generalize the period-index obstruction map $\Delta: H^1(K,A[P]) \ra \Br(K)$ to the setting of flat cohomology on $\Spec K$.  Fortunately for us, 
we need only quite formal properties of $\Delta$ whose proofs are direct analogues of the corresponding ones in the \'etale 
case.  In fact we also have work in progress on the explicit computation of $\Delta$ in terms of \textbf{symbol algebras} as is 
done in \cite{WCI}, \cite{WCII} when $A[P]$ is an \'etale group scheme whose corresponding Galois representation has sufficiently 
small image: \cite{WCV}.

\section{The period-index obstruction map in flat cohomology}
\noindent
Let $K$ be an arbitrary field, $A_{/K}$ an abelian variety of dimension $g$ and $P \in \Z^+$.  Then the morphism 
$[P]: A \ra A$ is an isogeny: in particular, on $\overline{K}$-points we have a short exact sequence 
\begin{equation}
\label{KUMMER0}
0 \ra A[P](\overline{K}) \ra A(\overline{K}) \stackrel{[P]}{\ra} A(\overline{K}) \ra 0. 
\end{equation}
We recall the following basic fact \cite[$\S 18$]{AV}:
\begin{prop}
\label{RECALLPROP}
The following are equivalent: \\
(i) The isogeny $[P]: A \ra A$ is separable. \\
(ii) The finite $K$-group scheme $A[P]$ is \'etale. \\
(iii) The characteristic of $k$ does \emph{not} divide $P$.
\end{prop}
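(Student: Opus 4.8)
The plan is to extract all three equivalences from one principle: the failure of $[P]$ to be separable is measured simultaneously by its separable degree, by the local structure of its scheme-theoretic kernel $A[P]$, and by its differential at the origin. First I would recall (Mumford, \cite[$\S 8$, $\S 18$]{AV}) that $[P]\colon A \ra A$ is a finite flat morphism of degree $P^{2g}$, so that $A[P] = \Ker[P]$ is a finite flat $K$-group scheme of $K$-rank $P^{2g}$, and that the separable degree $\deg_s[P]$ of the function-field extension equals the common number of points in any fiber of $[P]$ over a $\overline{K}$-point. Since any such fiber is a translate of $A[P](\overline{K})$ inside $A(\overline{K})$, this gives $\deg_s[P] = \# A[P](\overline{K})$.

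For (i) $\Leftrightarrow$ (ii): by definition $[P]$ is separable iff $\deg_s[P] = \deg[P]$, i.e. iff $\# A[P](\overline{K}) = \rank_K A[P]$. On the other hand, decomposing $A[P]_{\overline{K}}$ as a finite disjoint union of spectra of local Artinian $\overline{K}$-algebras shows that $\# A[P](\overline{K})$ is always the number of components while $\rank A[P]$ is the sum of their $\overline{K}$-dimensions; hence the two coincide iff each of these local algebras equals $\overline{K}$, i.e. iff $A[P]_{\overline{K}}$ is reduced, i.e. iff $A[P]_{\overline{K}}$ is étale over $\overline{K}$. Finally $A[P]$ is étale over $K$ iff $A[P]_{\overline{K}}$ is étale over $\overline{K}$, as étaleness is insensitive to the faithfully flat base change $\Spec \overline{K} \ra \Spec K$; so (i) $\Leftrightarrow$ (ii), and in particular no perfectness hypothesis on $K$ is needed.

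For (i) $\Leftrightarrow$ (iii): $[P]$ is a morphism between smooth $K$-varieties of equal dimension, and because $[P]\circ\tau_a = \tau_{Pa}\circ [P]$ for translations $\tau$, it is étale at one point iff étale at every point iff generically étale iff separable; moreover it is étale at $0$ iff the tangent map $d[P]_0\colon \Lie A \ra \Lie A$ is an isomorphism. I would then identify $d[P]_0$ with multiplication by the integer $P$: the differential of the group law $m\colon A\times A \ra A$ at $(0,0)$ is the addition map on $\Lie A \oplus \Lie A$, and writing $[P] = m\circ([P-1]\times \operatorname{id}_A)$ and inducting on $P$ yields $d[P]_0 = P\cdot\operatorname{id}_{\Lie A}$. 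As $\Lie A$ is a $g$-dimensional $K$-vector space, this is an isomorphism precisely when $P$ is a unit in $K$, i.e. when $\car(K)\nmid P$, which is condition (iii). This closes the cycle.

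I expect the only genuinely delicate points to be the two homogeneity reductions — that all fibers of $[P]$ have the same cardinality, and that étaleness of $[P]$ at a single point propagates everywhere — together with the computation $d[P]_0 = P$; all three are standard consequences of the group structure on $A$. Once they are in place, the remainder is bookkeeping with separable/inseparable degrees, ranks of finite group schemes, and faithfully flat base change.
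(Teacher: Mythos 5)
Your proof is correct, and since the paper offers no argument of its own here beyond the citation to Mumford \cite[$\S 18$]{AV}, there is nothing to diverge from: what you have written is precisely the standard argument that the citation points to (separable degree $=$ number of geometric points of the kernel, étaleness of a finite group scheme detected after base change to $\overline{K}$ by reducedness, and $d[P]_0 = P\cdot \operatorname{id}_{\Lie A}$ via translation homogeneity). The only pedantic caveat is that (iii) requires $\dim A \geq 1$ --- for $A = 0$ the map $[P]$ is separable in every characteristic --- and that the ``$k$'' in the paper's statement of (iii) should be read as the arbitrary base field $K$ of $\S 2$, exactly as you have done.
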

\noindent
When the equivalent conditions of Proposition \ref{RECALLPROP} hold, $[P]: A \ra A$ 
is an \'etale covering, so the fiber over any point $Q \in A(K^{\sep})$ is a finite \'etale algebra.  It follows 
that the sequence 
\begin{equation}
\label{KUMMER1}
0 \ra A[P](K^{\sep}) \ra A(K^{\sep}) \stackrel{[P]}{\ra} A(K^{\sep}) \ra 0 
\end{equation}
is exact.  
\subsection{Perfect fields of positive characteristic}
\textbf{} \\ \\ \noindent
Suppose that $K$ is a perfect field of characteristic $p > 0$, and let $P = p^k$, $k \in \Z^+$.  
In this case (\ref{KUMMER0}) and (\ref{KUMMER1}) are one and the same, so that (\ref{KUMMER1}) is exact.  Letting 
$A[P]^{\circ}$ denote the maximal 
\'etale quotient of $A[P]$, we may reinterpret (\ref{KUMMER1}) as a short exact sequence of abelian sheaves on the \'etale site of $\Spec K$
\[0 \ra A[P]^{\circ} \ra A \stackrel{[P]}{\ra} A \ra 0, \]
and taking \'etale = Galois cohomology we get 
\[0 \ra A(K)/PA(K) \ra H^1(K, A[P]^{\circ}) \ra H^1(K,A)[P] \ra 0. \]
\\
Example 2.2.1: Let $E_{/K}$ be a supersingular elliptic curve.  Then $E[P]^{\circ} = 0$, so that every torsor under $E$ 
has a $K$-rational point.
\\ \\
Returning to the general case, let $\eta \in H^1(K,A)[P]$ be any torsor under $A$, and choose any lift of $\eta$ to $\xi \in H^1(K,A[P]^{\circ})$.  By 
\cite[Prop. 12]{WCII}, $\xi$ can be split by an extension of degree at most $\# A[P]^{\circ}$ and has index dividing 
$\# A[P]^{\circ}$.  But $\# A[P]^{\circ} \ | \ P^g$, with equality iff $A$ is \emph{ordinary}.  
Since $\xi$ splits $\implies$ $\eta$ splits, we have the following result.
\begin{prop}
\label{PERFECTPROP}
Let $K$ be a perfect field of positive characteristic $p$, let $P$ be a power of $p$ and let $\eta \in H^1(K,A)[P]$.  Then 
$I(\eta) \ |  \ P(\eta)^g$.
\end{prop}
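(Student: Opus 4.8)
The plan is to use the exact sequence displayed immediately before the statement, reducing the problem to one about a torsor under a finite \'etale group scheme of controlled order. First I would reduce to the case $P = P(\eta)$: by definition of the period we have $\eta \in H^1(K,A)[P(\eta)]$, and $P(\eta)$ divides the given power $P$ of $p$, hence is itself a power of $p$, so after renaming $P := P(\eta)$ it suffices to prove $I(\eta) \mid P^g$.

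Since $K$ is perfect of characteristic $p$ and $P = p^k$, the sequence (\ref{KUMMER1}) is exact and, as recalled above, becomes a short exact sequence of abelian sheaves on the \'etale site $0 \ra A[P]^{\circ} \ra A \stackrel{[P]}{\ra} A \ra 0$, whose cohomology sequence yields a surjection $H^1(K, A[P]^{\circ}) \twoheadrightarrow H^1(K,A)[P]$. I would then choose a lift $\xi \in H^1(K,A[P]^{\circ})$ of $\eta$. Since $A[P]^{\circ}$ is a finite \'etale $K$-group scheme, \cite[Prop. 12]{WCII} applies and gives that $\xi$ is split by a separable extension of degree at most $\#A[P]^{\circ}$; in particular $I(\xi) \mid \#A[P]^{\circ}$.

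Two routine facts then finish the argument. First, the image of $\xi$ under the coefficient map $H^1(K,A[P]^{\circ}) \ra H^1(K,A)$ is $\eta$, so every finite extension of $K$ over which $\xi$ becomes trivial is one over which $\eta$ becomes trivial; since $I(\eta)$ is a gcd of degrees of such splitting extensions, $I(\eta) \mid I(\xi)$. Second, for the order bound: the connected--\'etale sequence gives $\#A[p]^{\circ} = p^f$, where $f$ is the $p$-rank of $A$ and $0 \leq f \leq g$, whence $\#A[P]^{\circ} = \#A[p^k]^{\circ} = p^{kf}$ divides $p^{kg} = P^g$ (with equality exactly when $A$ is ordinary). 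Combining, $I(\eta) \mid I(\xi) \mid \#A[P]^{\circ} \mid P^g = P(\eta)^g$.

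I do not anticipate a genuine obstacle, since the substance has been front-loaded into the discussion preceding the proposition; the one point that deserves to be stated carefully is the covariance of the index along the coefficient map $A[P]^{\circ} \ra A$, i.e. the divisibility $I(\eta) \mid I(\xi)$. This is immediate from the characterization of the index of a cohomology class as a gcd over splitting extensions, but it is the pivot of the whole argument --- note that the period does \emph{not} behave this way, which is precisely why one must bound the index of $\xi$ rather than its period.
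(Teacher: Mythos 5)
Your proposal is correct and follows the same route as the paper: lift $\eta$ along the surjection $H^1(K,A[P]^{\circ}) \twoheadrightarrow H^1(K,A)[P]$, bound the index of the lift by $\#A[P]^{\circ}$ via \cite[Prop. 12]{WCII}, and use $\#A[P]^{\circ} \mid P^g$. Your explicit reduction to $P = P(\eta)$ and the $p$-rank justification of the order bound are details the paper leaves implicit, but the argument is identical in substance.
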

\noindent
Remark 2.2.2: A similar argument applies to the ``usual'' Kummer sequence 
\[1 \ra \mu_p(\overline{K}) \ra \Gm(\overline{K}) \stackrel{[p]}{\ra} \Gm(\overline{K}) \ra 1, \]
to give that
\[\Br(K)[p] = H^2(K,(\mu_p)^{\circ}) = H^2(K,0) = 0. \]
\subsection{The Kummer sequence in flat cohomology}
\textbf{} \\ \\
We return to the case of general $K$ and $P$.  In this case we have a short exact sequence of commutative 
algebraic $K$-group schemes 
\[0 \ra A[P] \ra A \stackrel{[P]}{\ra} A \ra 0. \]
Viewing this as a short exact sequence of abelian sheaves on the flat site of $K$ and taking flat cohomology we get 
\[0 \ra A(K)/PA(K) \ra H^1(K,A[P]) \ra H^1(K,A)[P] \ra 0. \]
As an application of this formalism, we shall prove:
\begin{thm}
\label{GENERALPIBOUNDTHM}
Let $K$ be a field, $A_{/K}$ a $g$-dimensional abelian variety, and $\eta \in H^1(K,A)$ a torsor.  Then 
$I(\eta) \ | \ P(\eta)^{2g}$. 
\end{thm}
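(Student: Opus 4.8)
The plan is to lift $\eta$ along the flat Kummer sequence just displayed and then exploit the fact that a torsor under a finite flat group scheme is represented by a finite flat scheme of the expected rank. This is simply the flat-cohomology analogue of the argument behind \cite[Prop.~12]{WCII}; the point worth checking is that nothing in that argument actually used the group scheme to be \'etale.

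First I would set $P = P(\eta)$, so that $\eta \in H^1(K,A)[P]$, and use the flat Kummer sequence
\[0 \ra A(K)/PA(K) \ra H^1(K,A[P]) \ra H^1(K,A)[P] \ra 0\]
to choose a lift $\xi \in H^1(K,A[P])$ of $\eta$. Write $G = A[P]$. Since $[P]\colon A \ra A$ is a finite flat isogeny of degree $P^{2g}$ in every characteristic (\cite[\S 18]{AV}), $G$ is a finite flat $K$-group scheme with $\# G = P^{2g}$.

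Next I would recall that an fppf torsor $T$ under the finite (hence affine) flat group scheme $G$ is, by fppf descent for affine schemes, representable by a $K$-scheme that is finite and locally free of rank $\# G = P^{2g}$ over $\Spec K$. Writing $T = \Spec B$ and decomposing the Artinian $K$-algebra $B = \prod_{i=1}^{r} B_i$ into local factors with residue fields $L_i$ and $d_i := [L_i : K]$, filtration of each $B_i$ by powers of its maximal ideal (whose graded pieces are $L_i$-vector spaces) shows $d_i \mid \dim_K B_i$, whence $\gcd_i d_i$ divides $\sum_i \dim_K B_i = \dim_K B = P^{2g}$. On the other hand, each composite $B \to B_i \to L_i$ is an $L_i$-point of $T$, so $\xi$ dies in $H^1(L_i, G)$, hence $\eta$ dies in $H^1(L_i, A)$; that is, the torsor $X$ with class $\eta$ acquires an $L_i$-rational point. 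Therefore $I(\eta) = I(X)$ divides each $d_i$, hence divides $\gcd_i d_i$, hence divides $P^{2g} = P(\eta)^{2g}$.

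I do not anticipate a genuine obstacle: the only non-formal inputs are the flat Kummer sequence (already established), the degree computation $\deg[P] = P^{2g}$, and the representability of fppf $G$-torsors by finite locally free $K$-schemes. The one subtlety worth stating carefully is that, in contrast to the \'etale case, $\xi$ need not be split by a \emph{separable} extension of bounded degree; but for the period--index inequality only the existence of closed points on $T$ matters, and their degrees are controlled by $\dim_K B = \# G$ exactly as above.
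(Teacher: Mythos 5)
Your argument is correct, and it reaches the divisibility statement by a route that is genuinely different in its key mechanism from the one in the paper, even though the two proofs are secretly looking at the same finite scheme. The paper also lifts $\eta$ to $\xi \in H^1(K,A[P])$, but then invokes descent to realize $\xi$ as a twisted form $q\colon X \ra A$ of the isogeny $[P]\colon A \ra A$, pulls back the origin to get the effective zero-cycle $q^*([O])$ of degree $P^{2g}$ on the torsor $X$, checks separately that the resulting closed point $Q$ actually splits $\xi$ (by showing the pointed covering $(X,Q) \ra (A,O)$ becomes isomorphic to $[P]$ over $K(Q)$), and finally upgrades "there is a splitting field of degree at most $P^{2g}$" to the divisibility $I(\eta) \mid P^{2g}$ by a primary decomposition argument. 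You instead work directly with the $A[P]$-torsor $T$ itself: your $\Spec B$ is precisely the paper's fiber $q^{-1}(O)$, but packaging it as a torsor under the finite flat group scheme makes the splitting over each residue field $L_i$ automatic (a rational point of a torsor trivializes it, so $\xi|_{L_i}=0$ with no further argument), and your observation that $d_i \mid \dim_K B_i$, hence $\gcd_i d_i \mid \dim_K B = P^{2g}$, yields the divisibility of the index directly, with no primary decomposition needed. What your approach buys is economy and exact divisibility in one stroke; what the paper's buys is the explicit covering $X \ra A$ of the torsor under $A$, a geometric picture it reuses elsewhere (e.g., in the discussion of the period-index obstruction via maps to projective space). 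The only points you should cite carefully are the ones you already flagged: effectivity of fppf descent for affine schemes (so that the sheaf torsor is representable, finite locally free of rank $\#A[P]$) and $\deg[P] = P^{2g}$.
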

\noindent
Remark 2.3.1: Special cases of this result are due to Lang-Tate \cite[p. 678]{LT}, Lichtenbaum 
\cite[Thm. 8]{Lichtenbaum70} and Harase \cite[Thm. 4]{Harase}. 
\begin{proof}
We will show that any class $\xi \in H^1(K,A[P])$ splits over a field of degree at most $P^{2g}$.  By 
the surjectivity of $H^1(K,A[P]) \ra H^1(K,A)[P]$, this implies that any $\eta \in H^1(K,A)[P]$ 
has a splitting field of degree at most $P^{2g}$.  By an easy primary decomposition argument as in \cite[Prop. 12]{WCII}, 
we conclude that $I(\eta) \ | \ P^{2g}$.  \\ \indent
To establish this, consider the $K$-group scheme $A[P]$: it is the group scheme of all 
automorphisms $\varphi$ of $A$ commuting with $[P]: A \ra A$: $[P] = [P] \circ \varphi$.  Therefore, by the principle 
of descent, the flat cohomology group $H^1(K,A[P])$ classifies $(\overline{K}/K$)-twisted forms of $[P]: A \ra A$, 
i.e., morphisms $q: X \ra A$ fitting into a diagram

\[\begin{CD}
X_{/\overline{K}}  @>\sim>> A_{/\overline{K}}  \\
q @VVV          @V[P]VV   \\
 A_{/\overline{K}} @>1>> A_{/\overline{K}}
\end{CD} \]
In particular, $X_{/K}$ is a torsor under $A$ and the map $q: X \ra A$ has degree equal to the degree of $[P]: A \ra A$, 
namely $P^{2g}$.  Thus $q^*([O])$ is an effective $K$-rational zero-cycle on $X$ of degree $P^{2g}$, so yields a 
closed point $Q$ on $X$ such that $[K(Q):K] \leq P^{2g}$.  Over the field extension $K(Q)$, the morphism 
$q: (X,Q) \ra (A,O)$ and is therefore isomorphic to $[n]_{/K(Q)}: A \ra A$.  It follows that $K(Q)$ is a splitting 
field for $\xi$, completing the proof.
\end{proof} 
\noindent
Remark 2.3.2: In the earlier literature on the subject, together with the index one finds the \textbf{separable 
index}, the least positive degree of a $K$-rational divisor with support in $K^{\sep}$.  Our proof of Theorem 
\ref{GENERALPIBOUNDTHM} does \emph{not} give an upper bound for the separable index.  However, the recent preprint 
\cite{LG} shows that the index is equal to the separable index for smooth varieties over any field.  In particular this 
applies to torsors under abelian varieties and gives that the separable index divides the $(2g)$th power of the period.
\begin{cor}
\label{STRONGPOLCOR}
Let $(A,\lambda)_{/K}$ be a principally polarized abelian variety.  Then there exists a field extension $L/K$ of degree at most 
$2^{2 \dim A}$ such that $(A,\lambda)_{/L}$ is \emph{strongly} principally polarized.
\end{cor}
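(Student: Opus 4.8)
The plan is to combine the characterization of strong polarizations recalled in the introduction with the general period-index bound of Theorem \ref{GENERALPIBOUNDTHM}. Recall that a principal polarization $\lambda \in NS(A)(K)$ is strong if and only if the class $\Phi_{\PS}(\lambda) \in H^1(K,A^{\vee})$ vanishes, and that by the cited result of Poonen-Stoll we have $\Phi_{\PS}(\lambda) \in H^1(K,A^{\vee})[2]$. So it suffices to kill a $2$-torsion class in the Weil-Ch\^atelet group of the dual abelian variety $A^{\vee}$ by passing to a field extension of controlled degree.

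First I would set $\eta := \Phi_{\PS}(\lambda) \in H^1(K,A^{\vee})$, which is a torsor under $A^{\vee}$ with $P(\eta) \mid 2$. Since $\dim A^{\vee} = \dim A =: g$, Theorem \ref{GENERALPIBOUNDTHM} gives $I(\eta) \mid P(\eta)^{2g} \mid 2^{2g}$. By definition of the index, there is then a closed point on the underlying torsor $X$ of $\eta$ — equivalently a finite field extension $L/K$ — with $[L:K] \mid 2^{2g}$, hence $[L:K] \leq 2^{2g} = 2^{2\dim A}$. (If one prefers an extension of degree exactly dividing $2^{2g}$ rather than merely bounded by it, one takes the residue field of such a closed point directly; the bound $\le 2^{2\dim A}$ is all that is claimed.)

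Next I would check that $\eta$ dies over $L$. The restriction map $H^1(K,A^{\vee}) \to H^1(L,A^{\vee})$ sends $\eta$ to the class of the base-changed torsor $X_{/L}$, and $X_{/L}$ acquires an $L$-rational point precisely because $L$ was chosen to be the residue field of a closed point of $X$; a torsor under an abelian variety with a rational point is trivial. Thus $\eta_{/L} = 0$. Finally, I would note that $\Phi_{\PS}$ is functorial under base change — it is a connecting homomorphism for the short exact sequence of $\gg_K$-modules displayed in the introduction, and that sequence base-changes compatibly to $L$ — so $\Phi_{\PS,L}(\lambda_{/L})$ is the restriction of $\Phi_{\PS,K}(\lambda) = \eta$, namely $\eta_{/L} = 0$. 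By the Poonen-Stoll criterion applied over $L$, this says exactly that $\lambda_{/L}$ is a strong polarization, so $(A,\lambda)_{/L}$ is strongly principally polarized.

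The only point requiring a little care is the compatibility of $\Phi_{\PS}$ with base change, i.e. that the ample line bundle one wants to be $L$-rational is genuinely controlled by the vanishing of the restricted class; this is immediate from the naturality of the coboundary map once one observes that formation of $\FPic^0$, $\FPic$ and $NS$ commutes with the extension $K^{\sep} \hookrightarrow L^{\sep}$ of separable closures. No step here is a genuine obstacle — the corollary is essentially a direct packaging of Theorem \ref{GENERALPIBOUNDTHM} with the $2$-torsion statement already quoted — so the write-up will be short.
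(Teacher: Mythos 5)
Your proposal is correct and is essentially the paper's own proof: the paper likewise identifies the obstruction $\Phi_{\PS}(\lambda)\in H^1(K,A^{\vee})[2]$ and applies Theorem \ref{GENERALPIBOUNDTHM} to split it over an extension of degree at most $2^{2\dim A}$. Your write-up merely makes explicit the base-change compatibility of $\Phi_{\PS}$ and the passage from the index bound to an actual splitting field (which comes from the proof of Theorem \ref{GENERALPIBOUNDTHM}, not just its gcd statement), both of which the paper leaves implicit.
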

\begin{proof}
As recalled in $\S 0.1$, the obstruction to $\lambda$ being strong is an element $\Phi_{\PS}(\lambda) \in H^1(K,A)[2]$.  
Now apply Theorem \ref{GENERALPIBOUNDTHM}.
\end{proof}

\subsection{The period-index obstruction in flat cohomology} 
\textbf{} \\ \\ \noindent
Let $(A,\lambda)_{/K}$ be a strongly principally polarized abelian variety over an arbitrary field, and let $P \geq 2$ 
be an integer.  Let $\mathcal{L}$ be the $P$th multiple of the principal polarization.  Then D. Mumford has defined a \textbf{theta group}
\begin{equation}
\label{THETAEQ}
0 \ra \Gm \ra \mathcal{G}_{\mathcal{L}} \ra A[P] \ra 0, 
\end{equation}
in which $\Gm$ is the exact center \cite[$\S$ 23]{AV}.  When $\car(k) \nmid P$, (\ref{THETAEQ}) can be viewed 
as a sequence of sheaves of groups on the \'etale site of $K$.  Because $\Gm$ is central in $\mathcal{G}_{\mathcal{L}}$, there is a 
connecting map in nonabelian Galois cohomology 
\[\Delta: H^1_{\et}(K,A[P]) \ra H^2_{\et}(K,\Gm). \]
The map $\Delta$ was first studied by C.H. O'Neil in the case of $\dim A = 1$.  She had the fundamental insight that 
$\Delta$ is intimately related to the period-index problem, and accordingly she named $\Delta$ the \textbf{period-index 
obstruction map}.
\\ \\
Now we define the period-index obstruction map in arbitrary characteristic.  To do this, it suffices 
to identify (\ref{THETAEQ}) as a central, short exact sequence of sheaves of groups on the flat site of $\Spec K$.  We 
then get a connecting homomorphism 
\[ \Delta: H^1_{\fl}(K,A[P]) \ra H^2_{\fl}(K,\Gm) = H^2_{\et}(K,\Gm) = \Br(K); \]
again we have used the equality of flat and \'etale cohomology for smooth $K$-group schemes.  Moreover, having already 
introduced the notation $H^*(K,(A[P])^{\circ})$ for the \'etale cohomology of the maximal \'etale 
quotient of $A[P]$, it seems completely safe to abbreviate $H^*_{\fl}(K,A[P])$ to $H^*(K,A[P])$, and we do so in the 
sequel.  
\\ \\
The relation between $\Delta$ and the period-index problem is as follows.  Let $\eta \in H^1(K,A)[P]$, and let $\xi$ 
be any lift of $\eta$ to $H^1(K,A[P])$.  We again apply the principle 
of descent to characterize $H^1(K,A[P])$ as parameterizing a set of twisted forms of $A$ endowed with extra structure.  
This time our fundamental object is the morphism into projective space determined by the ample, basepoint free line 
bundle $\mathcal{L}$: $\varphi: A \ra \PP^N$.  It follows from the definition of the theta group scheme 
that $A[P]$ is precisely the group of translations $\tau_x$ of $A$ which extend via $\varphi$ to linear automorphisms 
$\gamma(\tau_x)$ of $\PP^N$, i.e., which render the following diagram commutative:
\[
\begin{CD}
A @>{\tau_x}>> A \\
@V{\varphi}VV @VV{\varphi}V \\
\PP^{N} @>{\gamma(\tau_x)}>> \PP^{N}
\end{CD} 
\]
\noindent
The twisted forms of $\varphi$ are morphisms $X \ra V$, where $X$ is a torsor under $A$ and $V$ is a Severi-Brauer variety.  
There is therefore a corresponding class $[V]$ in the Brauer group of $K$.  In \cite[$\S 5$]{WCII} it is shown that 
$\Delta(X \ra V) = [V]$.  The key step is that $\Delta: H^1(K,A[P]) \ra H^2(K,\Gm)$ factors through $H^1_{\fl}(K,\PGL_N) = 
H^1_{\et}(K,\PGL_N)$ (smoothness again).  The map $H^1(K,\PGL_N) \ra H^2(K,\Gm)$ can 
be computed explicitly in terms of cocycles as in $\S 5$ of \emph{loc. cit.} 
\\ \indent
This has the following consequence: suppose that $\eta \in H^1(K,A)$ is a class for which there exists at least one Kummer 
lift of $\eta$ to $\xi \in H^1(K,A[P])$ such that $\Delta(\xi) = 0$.  Then $\xi$ corresponds to a morphism $f: X \ra \PP^N$ 
of degree equal to the degree of $\varphi_{\mathcal{L}}$, namely $(g!) P^g$.  Intersecting the image of $f$ with a 
suitable linear subvariety of $\PP^N$ and pulling back via $f$, we get a $K$-rational zero cycle on $X$ of degree 
$g! P^g$.  We conclude that $I(\eta) \leq (g!) \ P^g$.  
%
\begin{thm}
The period-index obstruction map is a quadratic map on abelian groups: the associated map 
\[B: H^1(K,A[P]) \times H^1(K,A[P]) \ra \Br(K), \ (x,y) \mapsto \Delta(x+y) - \Delta(x) - \Delta(y) \]
is bilinear.
\end{thm}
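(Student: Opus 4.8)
The plan is to exhibit $\Delta$ as arising from a central extension by $\Gm$ and then to show that for any central extension $0 \to \Gm \to \mathcal{G} \to G \to 0$ of commutative group sheaves on the flat site, the associated connecting "map" $\Delta \colon H^1(K,G) \to H^2(K,\Gm)$ is quadratic, i.e.\ the symbol $B(x,y) = \Delta(x+y) - \Delta(x) - \Delta(y)$ is bilinear. The natural device is the commutator pairing of the theta group: since $\mathcal{G}_{\mathcal{L}}$ is a \emph{noncommutative} central extension of the commutative group $A[P]$ by $\Gm$, the commutator map $e_{\mathcal{L}} \colon A[P] \times A[P] \to \Gm$ (which is precisely the Weil pairing attached to $\mathcal{L}$, c.f.\ \cite[$\S 23$]{AV}) is a bilinear pairing of group schemes. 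I expect $B$ to be the cup product of $x$ and $y$ against $e_{\mathcal{L}}$, namely $B(x,y) = x \cup_{e_{\mathcal{L}}} y \in H^2(K,\Gm)$, which is manifestly biadditive; establishing this identification is the crux.

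First I would reduce to cocycle bookkeeping. Choose a flat covering trivializing everything and represent $x,y \in H^1(K,A[P])$ by cocycles valued in $A[P]$; lift these (noncanonically) to cochains valued in $\mathcal{G}_{\mathcal{L}}$, and let $\Delta(x), \Delta(y) \in H^2(K,\Gm)$ be the resulting $2$-cocycles measuring the failure of these lifts to be cocycles. A lift of a cocycle representing $x+y$ can be taken to be the pointwise product of the chosen lifts of $x$ and $y$. Then I would compute the coboundary of this product lift directly: because $\Gm$ is central, the product of two lifts of cocycles has coboundary equal to $\Delta(x)\,\Delta(y)$ times a correction term coming from moving one $A[P]$-valued factor past another inside $\mathcal{G}_{\mathcal{L}}$ — and that correction term is exactly the commutator, hence $e_{\mathcal{L}}$ evaluated on the relevant pairs of cocycle values. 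This yields $B(x,y) = x \cup_{e_{\mathcal{L}}} y$ as a $2$-cocycle identity.

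Once $B$ is identified with a cup product along the bilinear pairing $e_{\mathcal{L}}$, bilinearity is formal: cup product is biadditive, and $e_{\mathcal{L}}$ is additive in each variable, so $B(x_1 + x_2, y) = B(x_1,y) + B(x_2,y)$ and symmetrically. One should also check $B$ is well-defined on cohomology independent of the choices of lifts, which is automatic once $\Delta$ itself is well-defined as a function (the difference of two lifts of a given $A[P]$-cocycle is a $\Gm$-valued cochain, and chasing it through the commutator contributes a coboundary).

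The main obstacle is the first step: verifying carefully that working on the flat site, with $A[P]$ possibly non-\'etale when $\car(k) \mid P$, does not disrupt the commutator-pairing computation. The pairing $e_{\mathcal{L}}$ is a morphism of flat group schemes $A[P] \times A[P] \to \Gm$ regardless of separability, and cup product on flat cohomology is defined and biadditive in general, so no new input is needed beyond the formalism already set up in $\S 2.4$; but one must be scrupulous that the "noncommutative $2$-cocycle" manipulations are legitimate with \v{C}ech cochains on a flat hypercovering rather than Galois cochains. I would handle this by noting, as the paper does repeatedly, that the proof is a direct analogue of the \'etale case treated in \cite[$\S 5$]{WCII}, with group cochains replaced by \v{C}ech cochains, and that the only property of $\mathcal{G}_{\mathcal{L}}$ used — centrality of $\Gm$ together with bilinearity of the commutator — holds verbatim in the flat setting.
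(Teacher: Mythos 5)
Your proposal is correct, but it takes a different route from the paper. The paper's proof is essentially a citation: it observes that $\Delta$ factors through $H^1(K,\PGL_N)$, so that it enjoys the same formal properties as the connecting map of the central extension $1 \ra \Gm \ra \GL_N \ra \PGL_N \ra 1$, and then invokes Zarhin's theorem on Mumford groups for the quadraticity. What you propose is, in effect, a direct proof of Zarhin's theorem in the flat setting: the cocycle computation in which the deviation of $\Delta$ from additivity is produced by commuting the lifted cochains past one another inside $\mathcal{G}_{\mathcal{L}}$, yielding $B(x,y) = x \cup_{e_{\mathcal{L}}} y$ where $e_{\mathcal{L}}$ is the commutator pairing of the theta group. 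That computation is exactly right --- since $A[P]$ is commutative, every commutator in $\mathcal{G}_{\mathcal{L}}$ lands in the central $\Gm$ and depends only on the images in $A[P]$, so the correction term is the cup product against a biadditive pairing of group schemes, whence bilinearity of $B$. Your approach buys self-containedness and strictly more information than the statement (an explicit identification of $B$ as cup product against the Weil pairing attached to $\mathcal{L}$), at the cost of having to verify that the nonabelian \v{C}ech-cocycle manipulations are legitimate on the flat site when $A[P]$ is non-\'etale; you flag this correctly, and it is indeed only a formal transposition of the \'etale argument. The paper's approach buys brevity by outsourcing the computation to \cite{Zarhin}.
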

\begin{proof}
Again, the key is that $\Delta: H^1(K,A[P]) \ra H^2(K,\Gm)$ factors through $H^1(K,\PGL_N)$, so $\Delta$ satisfies the 
same formal properties as the connecting homomorphism $\Delta'$ for the central short exact sequence of smooth group schemes
\[1 \ra \Gm \ra GL_N \ra \PGL_N \ra 1. \]
Finally, it follows from a theorem of Zarhin that $\Delta'$ is a quadratic map \cite{Zarhin}.
\end{proof}

\begin{cor}
\label{BRCOR}
Define $P^*$ to be $P$ if $P$ is odd and $2P$ if $P$ is even.  Then
\[\Delta(H^1(K,A[P])) \subset \Br(K)[P^*]. \]
\end{cor}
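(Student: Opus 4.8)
The plan is to deduce this formally from the preceding theorem, which furnishes the companion $\Z$-bilinear form $B(x,y) = \Delta(x+y) - \Delta(x) - \Delta(y)$, together with the elementary fact that $H^1(K,A[P])$ is killed by $P$. For the latter: $A[P] = \Ker([P]\colon A \to A)$, so $[P]$ restricts to the zero endomorphism of the flat sheaf $A[P]$, and additivity of cohomology shows that multiplication by $P$ is the zero map on $H^1(K,A[P])$; thus every $\xi \in H^1(K,A[P])$ satisfies $P\xi = 0$.

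First I would record the standard iteration identity for a quadratic map $q$ with companion bilinear form $b$: since $b(0,0) = 0$ one has $q(0) = 0$, and then an induction on $n \geq 1$ gives
\[ q(nx) = n\, q(x) + \binom{n}{2}\, b(x,x), \]
the inductive step reading $q((n+1)x) = q(nx) + q(x) + b(nx,x) = q(nx) + q(x) + n\, b(x,x)$.

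Next I would specialize to $q = \Delta$, $b = B$, $n = P$ and a class $\xi$ with $P\xi = 0$. Because $\Delta(P\xi) = \Delta(0) = 0$, the identity yields $P\,\Delta(\xi) = -\binom{P}{2}\, B(\xi,\xi)$, while bilinearity of $B$ gives $P\, B(\xi,\xi) = B(P\xi,\xi) = 0$. Now I split on the parity of $P$. If $P$ is odd, then $\binom{P}{2} = P\cdot\frac{P-1}{2}$ with $\frac{P-1}{2}\in\Z$, whence $P\,\Delta(\xi) = -\frac{P-1}{2}\bigl(P\, B(\xi,\xi)\bigr) = 0$ and $\Delta(\xi)\in\Br(K)[P]$. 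If $P$ is even, then $2\binom{P}{2} = P(P-1)$, whence $2P\,\Delta(\xi) = -(P-1)\bigl(P\, B(\xi,\xi)\bigr) = 0$ and $\Delta(\xi)\in\Br(K)[2P]$. In either case $\Delta(\xi)\in\Br(K)[P^*]$, which is the assertion.

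There is essentially no obstacle here: the only points needing care are that $B$ be genuinely $\Z$-bilinear on the abelian group $H^1(K,A[P])$ --- exactly the content of the preceding theorem, and what licenses $B(P\xi,\xi) = P\,B(\xi,\xi)$ --- and the arithmetic of $\binom{P}{2}$, which produces the harmless extra factor of $2$ precisely when $P$ is even. (Alternatively, one could invoke $\Delta(nx) = n^2\Delta(x)$, valid for quadratic maps between abelian groups because $\Delta(-x) = \Delta(x)$ forces $B(x,x) = 2\Delta(x)$; combined with $2P\,\Delta(\xi) = 0$ this again pins down the $P^*$-torsion.)
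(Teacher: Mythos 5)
Your proof is correct and is in substance the same as the paper's: the paper disposes of the corollary by citing \cite[$\S 6.6$]{WCII} for the general fact that a quadratic map out of a $P$-torsion abelian group lands in the $P^*$-torsion, and your computation via $q(nx) = n\,q(x) + \binom{n}{2} b(x,x)$ combined with $P\xi = 0$ is precisely the proof of that fact. The one blemish is your closing parenthetical: $\Delta(-x) = \Delta(x)$, and hence $\Delta(nx) = n^2 \Delta(x)$, is \emph{not} forced by bilinearity of $B$ alone (a group homomorphism has $B \equiv 0$ yet is odd), so that shortcut is unavailable at this level of generality --- but your main argument, which correctly tracks the possible ``linear part'' of $\Delta$ through the term $n\,q(x)$, never relies on it.
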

\begin{proof}
This holds for any quadratic map between abelian groups: \cite[$\S 6.6$]{WCII}.  
\end{proof}
\noindent
This has as a consequence the following relation between Brauer groups and WC-groups, a 
characteristic-unrestricted version of \cite[Thm. 6]{WCII}.

\begin{thm}
\label{BRTHM}
Suppose that a field $K$ is Br(d) for some $d \in \N$.  Then $K$ is almost weakly pp WC(d+1).
\end{thm}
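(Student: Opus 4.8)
The plan is to combine the period-index obstruction map $\Delta$ of $\S 2.4$ with the $\mathrm{Br}(d)$ hypothesis. Since any finite extension of a $\mathrm{Br}(d)$ field is again $\mathrm{Br}(d)$ (the defining condition for $L$ is a special case of the one for $K$), it suffices to produce a function $C(g)$ such that $I(\eta) \le C(g)\,P(\eta)^{g+d}$ for every nontrivial principally polarized abelian variety $(A,\lambda)_{/K}$ of dimension $g$ and every $\eta \in H^1(K,A)$; note that $g + d = \dim A + (d+1) - 1$, so this is exactly the asserted bound.

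First I would reduce to the strongly principally polarized case. By Corollary \ref{STRONGPOLCOR} there is a field extension $K'/K$ of degree at most $2^{2g}$ over which $\lambda$ becomes strong; since $I(\eta) \mid [K':K]\,I(\eta_{K'})$ and base change preserves strength of a polarization, it is enough to bound $I(\eta_{K'})$. For notational ease I will replace $K$ by $K'$ and assume $(A,\lambda)_{/K}$ is strongly principally polarized, remembering that the final bound then acquires an extra factor $2^{2g}$. Set $P = P(\eta)$ (which divides the original period); the case $P=1$ is trivial, so assume $P \ge 2$. Using the flat Kummer sequence $0 \to A(K)/PA(K) \to H^1(K,A[P]) \to H^1(K,A)[P] \to 0$, choose a Kummer lift $\xi \in H^1(K,A[P])$ of $\eta$.

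By Corollary \ref{BRCOR}, $\Delta(\xi) \in \Br(K)[P^*]$ with $P^* \mid 2P$. Since $K$ is $\mathrm{Br}(d)$, the class $\Delta(\xi)$ has a splitting field $M/K$ with $[M:K]$ dividing $(P^*)^d$, hence $[M:K] \le (2P)^d$. Functoriality of the connecting map under base change gives $\Delta(\xi_M) = \mathrm{res}_{M/K}\,\Delta(\xi) = 0$; moreover $(A,\lambda)_{/M}$ is still strongly principally polarized and $\xi_M$ is still a Kummer lift of $\eta_M$. Hence the implication recorded in $\S 2.4$ — for a strongly principally polarized $(A,\lambda)$, a Kummer lift with vanishing $\Delta$ forces the underlying class to have index at most $(g!)\,P^g$ — yields $I(\eta_M) \le (g!)\,P^g$. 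Therefore $I(\eta) \mid [M:K]\,I(\eta_M) \le (2P)^d (g!) P^g$, and restoring the factor $2^{2g}$ from the first reduction gives $I(\eta) \le 2^{2g+d}\,(g!)\,P^{g+d}$ for the original data. Thus $C(g) := 2^{2g+d}\,(g!)$ works — a legitimate function of $g$, since $d$ is a fixed integer — and $K$ is almost weakly pp WC$(d+1)$.

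I do not anticipate a serious obstacle: every ingredient (the flat-cohomological $\Delta$, Corollaries \ref{BRCOR} and \ref{STRONGPOLCOR}, and the bound $I \le (g!)\,P^g$ when a Kummer lift kills $\Delta$) is already available and is insensitive to $\car(K)$ — which is precisely why the argument goes through uniformly, even when $P$ is divisible by $\car(K)$, the case that forced the passage to flat cohomology in the first place. The only points requiring a little care are the base-change functoriality of $\Delta$ (so that splitting $\Delta(\xi)$ in the Brauer group genuinely destroys the obstruction over $M$) and keeping the index bookkeeping straight across the two successive finite extensions $K \subseteq K' \subseteq M$, but both are routine.
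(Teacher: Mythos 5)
Your argument is correct and is essentially identical to the paper's proof: reduce to a strong principal polarization via Corollary \ref{STRONGPOLCOR}, take a Kummer lift $\xi$, apply Corollary \ref{BRCOR} together with the Br$(d)$ hypothesis to split $\Delta(\xi)$ over an extension $M$ of degree at most $(2P)^d$, and then use the $(g!)P^g$ zero-cycle coming from the vanishing of the obstruction. Your final constant $C(g)=2^{2g+d}(g!)$ and exponent $g+d=\dim A+(d+1)-1$ match the paper's exactly.
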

\begin{proof} 
Let $L/K$ be a finite extension, and let $(A,\lambda)_{/L}$ be a principally polarized abelian variety of dimension $g$.  By 
Corollary \ref{STRONGPOLCOR}, up to replacing $L$ by an extension field of degree at most $2^{2 g}$, we may assume 
that the polarization is strong.\footnote{We will need to use this observation several times in the sequel.  We will not further 
belabor the point but just include a correction factor of $2^{2 \dim A}$ whenever we make use of the period-index 
obstruction map.}  Let $\eta \in H^1(L,A)$ be any class, and choose any lift of $\eta$ to $\xi \in H^1(L,A[P])$.  By Corollary \ref{BRCOR}, $\Delta(\xi) \in \Br(L)[2P]$, so by our Br(d) 
assumption, there exists a splitting field $M/L$ for $\Delta(\xi)$ of degree at most $2^d P^d$.  Since 
$\Delta(\xi|_M) = (\Delta \xi)|_M = 0$, it follows that there exists an $M$-rational zero-cycle on the corresponding 
torsor $X$ of degree $(g!) P^g$, so altogether $\eta$ has a splitting field of degree at most \[ 2^{2g} 2^d P^d \cdot (g! P^g) = 
(2^{2g+d} g!) P^{\dim A + (d+1) - 1}. \]
\end{proof}
\noindent
In the sequel, we will use the following special case: if $\Br(K) = 0$, then for any principally polarized abelian variety $A_{/K}$ and 
$\eta \in H^1(K,A)$, $I(\eta) \leq 2^{2g} (g!) \ P(\eta)^{\dim A}$.  



\section{Proof of the Main Theorem}


\subsection{Travaux de Gabber-Liu-Lorenzini, Liu-Lorenzini-Raynaud}
\textbf{} \\ \noindent
\begin{thm}(\cite[Prop. 8.1]{LLR})
\label{LLRTHM}
Let $K$ be a discretely valued field with valuation ring $R$ and perfect residue field $k$.  Let $A_{/K}$ be an 
abelian variety with good reduction.  Then any torsor $V$ under $A$ admits a 
proper regular model $X_{/R}$ endowed with an action $A \times_R X \ra X$ 
extending the structure of a torsor under $A_{/K}$ on the generic fiber and 
such that the map $A \times_R X \ra X \times_R X$ given by $(a,x) \mapsto (ax,x)$ 
is surjective.  Then the reduced subscheme $X^{\red}_{/k}$ of the special fiber is 
a torsor under an abelian variety isogenous to $A_{/k}$.  
\end{thm}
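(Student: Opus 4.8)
The plan is to separate the assertion into two parts: the construction of a proper regular model $X_{/R}$ carrying the asserted $\mathcal{A}$-action and surjectivity property, and the identification of its reduced special fibre. I expect essentially all of the difficulty to reside in the first part. To set things up, recall that since $A_{/K}$ has good reduction its N\'eron model $\mathcal{A}_{/R}$ is an abelian scheme --- smooth and proper over $R$ with geometrically connected fibres --- and write $\mathcal{A}_k$ for its special fibre, an abelian variety over $k$ representing the isogeny class of ``$A_{/k}$''.

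To produce $X$, I would pass to a finite Galois extension $L/K$ with $V(L) \neq \emptyset$ (enlarging $L$ if necessary, e.g.\ so that the class of $V$ in $H^1(K,A)$ is represented by a cocycle valued in $A[P](L)$ for a suitable $P$, which is what makes the quotient below behave). Let $R_L$ be the integral closure of $R$ in $L$, again a discrete valuation ring, with separable residue extension because $k$ is perfect. A point of $V(L)$ identifies $V_L$ with $A_L$, which extends to the smooth proper $R_L$-scheme $\mathcal{A}_{R_L}$; the twisted semilinear action of $G = \Gal(L/K)$ defining $V$ extends to $\mathcal{A}_{R_L}$, since each translation extends by properness (equivalently the N\'eron property, $A(L) = \mathcal{A}(R_L)$) while the remaining, untwisted part of the action is base-changed from $R$. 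Passing to the quotient by $G$ produces a proper flat $R$-model $Y$ of $V$ on whose generic fibre $A$ still acts. Finally I would resolve the (quotient-type) singularities of $Y$ and pass to a suitable proper regular model $X$ --- contracting superfluous components of the special fibre --- so that $\mathcal{A} \times_R X \to X$ extends the torsor structure and $(a,x) \mapsto (ax,x)$ is surjective onto $X \times_R X$. This is the step I expect to be the main obstacle: for $\dim A = 1$ it is the classical Kodaira--N\'eron theory of minimal regular models of genus one curves, whereas in general resolution of singularities in relative dimension $\dim A$ is unavailable by soft means, and it is precisely here that the cited results of Liu--Lorenzini--Raynaud and Gabber--Liu--Lorenzini enter.

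Granting such an $X$, the analysis of $X^{\red}_{/k}$ is formal. The $\mathcal{A}$-action restricts to an action of $\mathcal{A}_k$ on the scheme-theoretic special fibre $X_k := X \times_R k$, and it preserves the reduced subscheme $Z := X^{\red}_{/k}$; being connected, $\mathcal{A}_k$ carries each irreducible component of $Z$ into itself. Base-changing the surjectivity of $\mathcal{A} \times_R X \to X \times_R X$ to $k$, the map $\mathcal{A}_k \times_k X_k \to X_k \times_k X_k$, $(a,x) \mapsto (ax,x)$, is surjective, hence so is the induced map on $\overline{k}$-points of $Z$. If $Z$ were reducible, a pair $(y,x)$ with $y$ and $x$ in distinct components could not be hit, since $ax$ lies in the component of $x$; so $Z$ is irreducible, indeed geometrically irreducible as $\mathcal{A}_k$ is geometrically connected, and $Z$ is reduced hence geometrically reduced because $k$ is perfect --- so $Z$ is a $k$-variety. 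Moreover the surjectivity now says precisely that $\mathcal{A}_k(\overline{k})$ acts transitively on $Z(\overline{k})$: thus $Z$ is a homogeneous space under the smooth connected group $\mathcal{A}_k$.

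It remains to exhibit $Z$ as a torsor. Since $X$ is flat over $R$ with $(\dim A)$-dimensional generic fibre, its special fibre --- cut out by a nonzerodivisor --- is pure of dimension $\dim A = \dim \mathcal{A}_k$, and in particular so is $Z$; hence the scheme-theoretic stabilizer $H \subseteq \mathcal{A}_k$ of a geometric point of $Z$ is a finite subgroup scheme, so $B := \mathcal{A}_k/H$ is an abelian variety and $\mathcal{A}_k \to B$ an isogeny. Because a homogeneous space under a smooth group scheme over a field is itself smooth, $Z$ is smooth, and the orbit map identifies $Z$ with $\mathcal{A}_k/H = B$; thus $Z = X^{\red}_{/k}$ is a torsor under $B$ (via translations), and $B$ is isogenous to $\mathcal{A}_k$, hence to ``$A_{/k}$'', which is the assertion. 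Among these concluding steps only the smoothness of homogeneous spaces under smooth groups is non-formal, and I would cite it rather than reprove it; the rest is dimension-counting together with the behaviour of connected group actions on reducible schemes.
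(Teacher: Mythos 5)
First, a point of reference: the paper does not prove this statement at all --- it is quoted verbatim as \cite[Prop.~8.1]{LLR} and used as a black box --- so there is no internal proof to compare against. Judged on its own terms, your second half is correct and is the standard argument: using that $k$ is perfect so that $\mathcal{A}_k\times_k X_k^{\red}$ is reduced (hence the action preserves $X_k^{\red}$), that a connected group preserves irreducible components while the reduced surjectivity forces transitivity on $\overline{k}$-points, and that flatness gives $\dim X_k^{\red}=\dim A$ so the stabilizer is a finite (automatically normal, by commutativity) subgroup scheme $H$, one does indeed get that $X_k^{\red}$ is a smooth torsor under the abelian variety $\mathcal{A}_k/H$. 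That part I would accept as written.

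The genuine gap is in your first half, which is where all the content of the proposition lives. Forming $Y=\mathcal{A}_{R_L}/G$ by twisted descent is fine, but $Y$ is regular only where $G$ acts freely, and the reduction of the cocycle $\sigma\mapsto c_\sigma$ to $\mathcal{A}_k$ can have a kernel, so $Y$ will in general have (possibly wild, if $\car(k)\mid\#G$) quotient singularities; resolution of singularities in relative dimension $g>1$ over a mixed- or positive-characteristic base is not available, and even granting a resolution $X'\to Y$ you give no argument that the $\mathcal{A}$-action extends over the exceptional locus, nor that the surjectivity of $(a,x)\mapsto(ax,x)$ onto $X\times_R X$ survives the modification --- exceptional components of a resolution are precisely the kind of ``extra'' special-fiber strata that this surjectivity forbids, and ``contracting superfluous components'' in relative dimension $>1$ is not a soft operation. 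Deferring this step to Liu--Lorenzini--Raynaud is deferring it to the very result you are asked to prove (and Gabber--Liu--Lorenzini is a different theorem, about index specialization, which does not help construct the model). In the actual LLR argument the regularity of the model is established directly, by controlling the splitting field $L$ and analyzing the quotient locally, rather than by resolving an arbitrary singular quotient; without some such argument your construction does not produce the asserted $X$.
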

\noindent
We will call a model $X_{/R}$ as in the statement of Theorem \ref{LLRTHM} an \textbf{LLR model}.
\begin{thm}(Index Specialization Theorem \cite{GLL})
\label{GLLTHM}
Let $K$ be a Henselian discretely valued field with valuation ring $R$ and residue field $k$.  Let $X$ be regular 
scheme equipped with a proper flat morphism $X \ra \Spec R$.  Denote by $X_{/K}$ (resp. $X_{/k}$) the generic (resp. special) 
fiber of $X \ra \Spec R$.  Write $X_k$ as $\sum_{i=1}^n r_i \Gamma_i$, where each $\Gamma_i$ is irreducible and of 
multiplicity $r_i$ in $X_{/k}$.  Then 
\[ I(X_{/K}) = \gcd_i r_i I(\Gamma_i^{\reg}). \]
\end{thm}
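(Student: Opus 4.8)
Set $n = \dim X_{/K}$; the case $n = 0$ is the explicit description of the finite flat $R$-algebra $X$ (a product of discrete valuation rings when $X$ is regular, for which the identity is immediate), so assume $n \geq 1$. Write $d^{\ast} = \gcd_i r_i I(\Gamma_i^{\reg})$. Since $X \to \Spec R$ is flat, the uniformizer $\pi$ is a nonzerodivisor on $\mathcal{O}_X$ and $X_k = \div(\pi) = \sum_i r_i \Gamma_i$ as a Cartier divisor; hence for any horizontal ($R$-flat) $1$-cycle $W$ on $X$ meeting the $\Gamma_i$ properly one has $\deg W_K = \deg(W \cdot \div \pi) = \sum_i r_i \deg(W \cdot \Gamma_i)$, which is the basic bookkeeping device in both halves of the argument. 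The plan is to prove the two divisibilities $I(X_{/K}) \mid d^{\ast}$ and $d^{\ast} \mid I(X_{/K})$, after reducing to the case where $X$ is projective over $R$ (via Chow's lemma together with resolution of singularities, or de~Jong alterations where resolution is unavailable, keeping track of how a generically finite map affects the index) and, where the residue field $k$ is too small, enlarging it by an unramified base change $R \to R'$. One should be aware that this last reduction is itself a genuine point: $I(X_{/K})$ may drop under base change while the $\Gamma_i$ simultaneously break up and the $I(\Gamma_i^{\reg})$ drop, so one must verify a compatibility/descent statement for the asserted identity.

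For $I(X_{/K}) \mid d^{\ast}$, fix $i$ and choose closed points of $\Gamma_i^{\reg}$, disjoint from all other components, whose degrees have gcd $I(\Gamma_i^{\reg})$ (after the base change above one point of degree exactly $I(\Gamma_i^{\reg})$ suffices). At such a point $x$ one has $\pi = u\,t^{r_i}$ with $u$ a unit and $t$ a local equation of $\Gamma_i$, and $t$ extends to a regular system of parameters $t, y_1, \dots, y_n$ of $\mathcal{O}_{X,x}$; cutting by $y_1, \dots, y_n$ produces a regular, $R$-flat, $R$-finite curve-germ whose generic fibre is a closed point of degree $r_i[\kappa(x):k]$. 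The substantive step is to globalize this germ: by a Bertini-type argument on the projective $R$-scheme $X$ — taking a linear system $\lvert mH\rvert$ for $m$ large, with sections prescribed to first order at $x$ — one obtains, after $n$ successive cuts, a horizontal curve $D \subset X$ which is regular at $x$, realizes the above germ there, and meets $X_k$ properly and only at $x$, inside $\Gamma_i^{\reg}$. Then $\deg D_K = r_i[\kappa(x):k]$. Letting $x$ range, $X_{/K}$ carries closed points of degrees whose gcd divides $r_i I(\Gamma_i^{\reg})$, and so $I(X_{/K}) \mid d^{\ast}$.

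For $d^{\ast} \mid I(X_{/K})$, it suffices to show $d^{\ast} \mid [\kappa(P):K]$ for every closed point $P \in X_{/K}$. Its closure $Z = \overline{\{P\}}$ in $X$ is integral, proper with finite fibres over $R$ hence finite, torsion-free hence $R$-flat of rank $[\kappa(P):K]$, so $Z$ meets $X_k$ properly and $\deg(Z \cdot \div\pi) = \dim_k(Z \otimes_R k) = [\kappa(P):K]$; but $Z$ need not meet the $\Gamma_i$ inside their regular loci, nor avoid the intersection loci of distinct components. One therefore invokes a moving lemma for $1$-cycles on the regular projective $R$-scheme $X$ to replace $Z$ by a rationally equivalent effective horizontal $1$-cycle $Z'$ that misses the ($\leq (n-1)$-dimensional) union of the component-intersection loci and the $\Gamma_i^{\sing}$, and meets each $\Gamma_i$ properly inside $\Gamma_i^{\reg}$. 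Rational equivalence and properness give $[\kappa(P):K] = \deg(Z \cdot \div\pi) = \deg(Z' \cdot \div\pi) = \sum_i r_i \deg(Z' \cdot \Gamma_i)$, and each $\deg(Z' \cdot \Gamma_i)$ is a sum of degrees of closed points of $\Gamma_i^{\reg}$, hence divisible by $I(\Gamma_i^{\reg})$. Thus $r_i I(\Gamma_i^{\reg}) \mid [\kappa(P):K]$ for all $i$, and so $d^{\ast} \mid [\kappa(P):K]$.

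The two cycle-theoretic computations above are routine; the real difficulty, common to both directions, is to supply the geometric inputs in the required generality. The classical Bertini and Chow moving lemmas are stated for \emph{smooth} quasi-projective varieties over an \emph{infinite} field, whereas here the total space $X$ is only regular, the base is a henselian discrete valuation ring, and $k$ may be finite. Proving the needed analogues — Bertini theorems over a base in the style of Gabber, a Bertini theorem over finite fields à la Poonen for the small-residue-field case, and a moving lemma for cycles on regular arithmetic schemes — together with the descent argument underpinning the reductions to projective $X$ and to a convenient residue field, is where essentially all of the work lies. (An alternative organization, perhaps closer to the original, is to induct on $n$ by iterated hyperplane sections, reducing to the case of regular arithmetic surfaces, where the identity is a statement about intersection numbers on the special fibre; the same Bertini-type inputs are needed to keep the sections regular and transverse to the components.)
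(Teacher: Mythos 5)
First, a point of comparison: the paper does not prove this statement at all --- it is imported verbatim from the Gabber--Liu--Lorenzini preprint \cite{GLL} and used as a black box, so there is no in-paper argument to measure your proposal against. Your outline does correctly identify the shape of the known proof (the cited preprint is, after all, titled ``Moving Lemmas and the Index of Algebraic Varieties''): the two divisibilities, the bookkeeping identity $\deg(W\cdot \div(\pi))=\sum_i r_i \deg(W\cdot\Gamma_i)$ for horizontal cycles, the construction of horizontal curve-germs through prescribed points of $\Gamma_i^{\reg}$ for the easy divisibility, and a moving lemma to push the closure of a closed point of $X_{/K}$ off the singular and intersection loci for the hard one.

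That said, as a proof the proposal has a genuine gap, and you say so yourself: every technically substantive ingredient --- a moving lemma for $1$-cycles on a regular projective scheme over a Henselian discrete valuation ring, Bertini-type theorems over such a base (including the finite-residue-field case), and the descent/compatibility statements underpinning the reductions to projective $X$ and to a convenient residue field --- is named but not proved, and none of these is standard or available off the shelf; they constitute essentially the entire content of \cite{GLL}. The reduction to projective $X$ is itself doubtful as stated: resolution of singularities is not available over a general $R$, and replacing $X$ by an alteration changes the index on both sides of the asserted identity in a way you do not control. Even the innocuous-looking claim that the degrees of closed points of $\Gamma_i^{\reg}$ avoiding the other components still have gcd equal to $I(\Gamma_i^{\reg})$ requires an argument, since removing a closed subset can a priori increase the gcd of degrees of the remaining points. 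Finally, there is a small arithmetical slip at the end of the second divisibility: from $[\kappa(P):K]=\sum_i r_i\deg(Z'\cdot\Gamma_i)$ with $I(\Gamma_i^{\reg})\mid \deg(Z'\cdot\Gamma_i)$ you cannot conclude that $r_iI(\Gamma_i^{\reg})\mid[\kappa(P):K]$ for each $i$ (this is false when $Z'$ meets several components, and is not what the theorem asserts); what does follow, and suffices, is that $d^{\ast}=\gcd_j r_jI(\Gamma_j^{\reg})$ divides every summand and hence the sum.
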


\subsection{Beginning of the proof} \textbf{} \\ \\ \noindent
Suppose $K$ is complete, discretely valued field with perfect residue field $k$.  If $\car(k) = 0$, 
we are assuming the existence of $i \in \N$ and a function $c: \Z^+ \ra \Z^+$ such that: for all 
finite extensions $l/k$, all nontrivial abelian varieties $A_{/l}$ and all torsors $\eta \in H^1(l,A)$, we have 
\[I(\eta) \leq c(\dim A) P(\eta)^{\dim A + i - 1}. \]
If $\car(k) > 0$, we assume that there exists $i \in \N$ and a function $c: \Z^+ \ra \Z^+$ such that for all abelian varieties
$A_{/k}$ and all torsors $\eta \in H^1(k,A)$, 
\[ I(\eta) \leq c(\dim A) P(\eta)^i . \]
In either case we wish to show that there exists a function $C: \Z^+ \ra \Z^+$ such that for all finite extensions $L/K$, 
all principally polarizable abelian varieties $A_{/L}$ and all torsors $\eta \in H^1(L,A)$, 
\[I(\eta) \leq C(\dim A) P(\eta)^{\dim A + i}. \]
Both the hypothesis and the conclusion are stable under finite base extensions, so it is no loss of generality to 
assume $L = K$.  Thus, let $A_{/K}$ be a principally polarizable abelian variety and $\eta \in H^1(K,A)$.  Let 
$(X\,mu)$ be the corresponding torsor under $A$.  By (common) abuse of notation we will omit the action $\mu$ in what follows.

\subsection{Good reduction} \textbf{} \\ \\
\noindent
Suppose $A_{/K}$ has good reduction.  
\\ \\
In the case when $k$ is WC(0) we can prove somewhat sharper results, so we begin there and then discuss 
modifications necessary to establish the general case.
\\ \\
Let $P_{\unr}$ (resp. $I_{\unr}$) denote the period (resp. the index) of the torsor $X$ extended to 
the field $K^{\unr}$.  As for every field extension, we have $P_{\unr} \ | \ P$ and 
$I_{\unr} \ | \ I$.  By Lang's theorem $\Br(K^{\unr}) = 0$, so by Theorem \ref{BRTHM}, $I_{\unr} \leq 2^{2g} (g!) \  P_{\unr}^g$.    
\\ \\
Therefore the following result implies the Main Theorem in this case.

\begin{prop}
\label{WC0PROP}
Suppose that $A$ has good reduction and $k$ is WC$(0)$.  Then
\[P = P_{\unr}, \ I = I_{\unr}. \]
\end{prop}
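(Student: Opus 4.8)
The plan is to realize $X$ as the generic fibre of a good regular $R$-model, read off $I$ and $I_{\unr}$ from the special fibre via the Index Specialization Theorem, and then handle the period separately by an inflation--restriction argument; in both halves the WC$(0)$ hypothesis on $k$ enters by forcing the cohomology over $k$ of the relevant abelian variety to vanish.

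\textbf{The index.} Since $A$ has good reduction, Theorem \ref{LLRTHM} supplies an LLR model $\mathcal{X}_{/R}$ of $X$ whose special fibre has reduced subscheme $\Gamma:=\mathcal{X}_k^{\red}$ a torsor under an abelian variety $B_{/k}$ isogenous to the reduction of $A$. Because $B$ is an abelian variety, $\Gamma$ is geometrically integral and smooth, so $\mathcal{X}_k$ is supported on the single divisor $\Gamma$, say with multiplicity $r$, and $\Gamma^{\reg}=\Gamma$. As $K$ is complete, hence Henselian, Theorem \ref{GLLTHM} gives $I=r\cdot I(\Gamma)$. Now $\Gamma$ is a torsor under the abelian variety $B_{/k}$ and $k$ is WC$(0)$, so $\Gamma(k)\neq\emptyset$, whence $I(\Gamma)=1$ and $I=r$. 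Base-changing $\mathcal{X}$ along $R\to R^{\unr}$ (which is ind-\'etale, so regularity is preserved) produces a proper regular model of $X_{/K^{\unr}}$ over the Henselian discretely valued field $K^{\unr}$; since $k$ is perfect its special fibre is $r\cdot(\Gamma\times_k k^{\sep})$, and $\Gamma\times_k k^{\sep}$ is a torsor under an abelian variety over $k^{\sep}$, hence of index $1$. Theorem \ref{GLLTHM} applied over $K^{\unr}$ then yields $I_{\unr}=r=I$.

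\textbf{The period.} It is clear that $P_{\unr}\mid P$, and it is enough to show that the restriction map $H^1(K,A)\to H^1(K^{\unr},A)$ is injective. As $K^{\unr}/K$ is Galois with group $\gk$, inflation--restriction identifies the kernel with $H^1(\gk,A(K^{\unr}))$. Let $\mathcal{A}_{/R}$ be the N\'eron model of $A$, an abelian scheme by good reduction, with special fibre $B=\mathcal{A}_k$; properness gives $A(K^{\unr})=\mathcal{A}(R^{\unr})$, and reduction gives a short exact sequence of $\gk$-modules
\[ 0 \to \hat{\mathcal{A}}(\mathfrak{m}^{\unr}) \to \mathcal{A}(R^{\unr}) \to B(k^{\sep}) \to 0, \]
where $\hat{\mathcal{A}}$ is the formal group along the zero section. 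Here $H^1(\gk,B(k^{\sep}))=H^1(k,B)=0$ because $k$ is WC$(0)$, so it remains to see $H^1(\gk,\hat{\mathcal{A}}(\mathfrak{m}^{\unr}))=0$. For that one observes that $\hat{\mathcal{A}}(\mathfrak{m}^{\unr})$ is a discrete $\gk$-module, the union over finite unramified $L/K$ of the $\hat{\mathcal{A}}(\mathfrak{m}_L)$; for each such $L$ the ring $\mathcal{O}_L$ is complete, the standard filtration of $\hat{\mathcal{A}}(\mathfrak{m}_L)$ by the $\hat{\mathcal{A}}(\mathfrak{m}_L^n)$ has successive quotients isomorphic as $\Gal(L/K)=\Gal(k_L/k)$-modules to sums of copies of $k_L$, and $H^1(\Gal(k_L/k),k_L)=0$ by the normal basis theorem; a d\'evissage (legitimate because $\hat{\mathcal{A}}(\mathfrak{m}_L)$ is complete and the transition maps are surjective) gives $H^1(\Gal(L/K),\hat{\mathcal{A}}(\mathfrak{m}_L))=0$, and passing to the direct limit over $L$ gives the vanishing. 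Hence the kernel of restriction is trivial, and $P=P_{\unr}$.

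\textbf{Main obstacle.} I expect the one delicate point to be the formal-group cohomology vanishing $H^1(\gk,\hat{\mathcal{A}}(\mathfrak{m}^{\unr}))=0$ in positive and mixed residue characteristic: reducing to the complete local case, running the d\'evissage through the additive successive quotients, and justifying the limit. A close second is the (standard but not entirely formal) verification that base change of the LLR model along $R\to R^{\unr}$ remains regular with special fibre of the expected shape, so that the Index Specialization Theorem is legitimately applied over $K^{\unr}$; everything else (the structure of the LLR special fibre, Henselianness of $K^{\unr}$, and the two invocations of WC$(0)$) is immediate from the cited results.
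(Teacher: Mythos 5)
Your proof is correct. The index half is essentially the paper's argument: the LLR model (Theorem \ref{LLRTHM}) has irreducible special fibre $r\Gamma$ with $\Gamma$ a smooth torsor under an abelian variety over $k$, WC$(0)$ makes $\Gamma$ (and hence $\Gamma_{k^{\sep}}$) of index one, and the Index Specialization Theorem applied over $K$ and over $K^{\unr}$ gives $I = r = I_{\unr}$; you merely spell out the regularity of the base-changed model, which the paper leaves implicit. The period half rests, in both proofs, on the same key fact --- $H^1(K^{\unr}/K,A)=0$ for $A$ with good reduction over a CDVF with perfect WC$(0)$ residue field --- but you prove it by a genuinely different route. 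The paper argues geometrically on the torsor side: a class killed by $K^{\unr}$ is a torsor with an unramified point, so its LLR model has smooth (reduced) special fibre, which is a torsor under an abelian variety over $k$; WC$(0)$ gives a $k$-point and Hensel's lemma lifts it. You instead run the classical Lang--Tate computation on the coefficient side: inflation--restriction identifies the kernel with $H^1(\gk, A(K^{\unr}))$, properness of the abelian scheme gives the reduction sequence, $H^1(k,B)=0$ by WC$(0)$, and the formal-group contribution dies by the filtration whose graded pieces are copies of $k_L$ killed by the normal basis theorem, followed by a limit over finite unramified $L$. The step you flag as delicate is indeed the only one needing care, and your treatment (completeness of $\mathcal{O}_L$ for the successive-approximation d\'evissage, then the direct limit for discrete modules) is the standard and correct one. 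The trade-off: the paper's route applies the WC$(0)$ hypothesis uniformly to special fibres of regular models, so the whole proposition lives inside one geometric framework and no formal-group input is needed; yours is self-contained Galois cohomology in the spirit of Lang--Tate, avoids having to justify that an unramified rational point forces the LLR special fibre to be smooth, and isolates exactly where each hypothesis (good reduction, perfectness of $k$, WC$(0)$) enters.
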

\begin{proof} 
Step 1: We claim that $V$ does not split in $K^{\unr}$.  (In other words, 
since $V$ is arbitrary, we claim that the relative WC-group $H^1(K^{\unr}/K,A)$ is 
trivial in the case of good reduction.)  Indeed, suppose to the contrary that $X$ admits a $K^{\unr}$-rational 
point.  Let $A_{/R}$ be the N\'eron model of $A$ and $X_{/R}$ an LLR model.  It follows that the special fiber of $X$ is smooth.  
Now we use our WC$(0)$: $X(k) \neq \varnothing$.    By Hensel's Lemma, 
$V(K) \neq \varnothing$.  
\\ \\
Step 2: Now consider the class $\eta' = P_{\unr} \eta$ in $H^1(K,A)$.  
We have 
\[\eta' |_{K^{\unr}} = P_{\unr} \eta |_{K^{\unr}} = P_{\unr} (\eta_{K^{\unr}}) = 0, \]
so by Step 1 $\eta' = 0$.  It follows that $P \ | \ P_{\unr}$, so $P = P_{\unr}$.
\\ \\  
Step 3: We appeal to the LLR-model $X_{/R}$ of Theorem \ref{LLRTHM}.  The condition 
WC(0) implies that $X^{\red}_{/k}$ has a $k$-rational point.  By the Index Specialization Theorem, the index of $X$ is equal to the multiplicity of the special fiber.  This quantity does not 
change upon unramified base extension, so $I = I_{\unr}$.  This completes the proof of Proposition \ref{WC0PROP}, 
and with it the good reduction case of Theorem \ref{THM1}a).
\end{proof}
\noindent
Virtually the same argument works if $k$ \emph{almost} WC$(0)$: i.e., if there exists a constant 
$c(g)$ such that for all principally polarized $A_{/k}$ and $\eta \in H^1(k,A)$, $I(\eta) \leq c(g)$.  Let 
$X$ be the torsor corresponding to $\eta$ and $X_{/R}$ its LLR-model. By assumption, after making a field 
extension of degree at most $c(g)$, the reduced special fiber has a rational point.  Again by index specialization, 
it follows that we can further trivialize the class by making a totally ramified extension of degree $I_{\unr} 
\leq 2^{2g} g! P_{\unr}^g \leq 2^{2g} g! P^g$.  Thus overall $V$ can be split by an extension of degree at most $ 2^{2g} (g!) \ c(g)\  P^{g}$.
\\ \\
Next we recall the following closely related result, which is essentially due to Lang and Tate 
(c.f. \cite[Cor. 1]{LT}).
\begin{cor}
\label{LTCOR}
Let $K$ be a CDVF with perfect WC$(0)$-residue field $k$.  Let $A_{/K}$ an abelian variety and 
$\eta \in H^1(K,A)$.  If $\car(k)$ does not divide $P(\eta)$, then a finite field extension $L/K$ splits $X$ 
if and only if $e(L/K) \ | \ P(\eta)$.  
\end{cor}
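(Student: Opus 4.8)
The plan is to establish the sharper statement that, for every finite extension $L/K$, the torsor $X$ has an $L$-rational point if and only if $P(\eta)$ divides $e(L/K)$ --- splitting $X$ requires, and is accomplished by, exactly the ramification that the period forces. Write $P = P(\eta)$ and $p = \car k$ (allowing $p = 0$, which makes the hypothesis vacuous), so $p \nmid P$, and recall that $A$ has good reduction, the standing assumption of this subsection. My first step would be to reduce to the case of a separably closed residue field. Passing from $K$ to $K^{\unr}$ disturbs neither side of the asserted equivalence: Proposition \ref{WC0PROP} gives $P(\eta|_{K^{\unr}}) = P$, and the same proposition applied to $A_{/L}$ --- which still has good reduction, and whose residue field, a finite extension of the perfect field $k$, is again perfect and, by Remark 1.2.1, again WC$(0)$ --- yields $H^1(L^{\unr}/L, A) = 0$, so that $X(L) \neq \varnothing$ exactly when $X(L^{\unr}) \neq \varnothing$, where $L^{\unr} := L K^{\unr}$; moreover $L^{\unr}/K^{\unr}$ is totally ramified (trivial residue extension) of ramification index $e(L/K)$. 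Hence I may assume henceforth that $k$ is separably closed --- thus, $k$ being perfect, algebraically closed --- and that $L/K$ is totally ramified.

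Under this assumption both error terms in the flat (here \'etale, as $p \nmid P$) Kummer sequence $0 \to A(K)/PA(K) \to H^1(K,A[P]) \to H^1(K,A)[P] \to 0$ vanish. On one side, the kernel-of-reduction subgroup of $A(K)$ is uniquely divisible by every integer prime to $p$, while the $k$-points of the abelian-variety special fibre of the N\'eron model form a divisible group; a snake-lemma argument, using $p \nmid P$, gives $A(K)/PA(K) = 0$. On the other side, good reduction and the criterion of N\'eron--Ogg--Shafarevich make $A[P]$ an unramified $\gK$-module, and since $K$ admits no nontrivial unramified extension $A[P]$ is in fact the \emph{constant} $\gK$-module $(\Z/P\Z)^{2g}$. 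So the Kummer sequence degenerates to a canonical isomorphism $H^1(K,A)[P] \cong H^1(K,A[P]) = \Hom_{\mathrm{cont}}(\gK,(\Z/P\Z)^{2g})$ (flat and Galois cohomology agreeing for the \'etale group $A[P]$). Since $p \nmid P$, any such homomorphism kills the wild inertia and so factors through the tame quotient $\gK^{\tame}$, which is topologically procyclic on a tame generator $\tau$. Thus $\eta$ is recorded by the single point $\phi(\tau) \in A[P]$, and --- invoking Proposition \ref{WC0PROP} once more, now in the form $P = P(\eta|_{K^{\unr}})$ --- the order of $\phi(\tau)$ is exactly $P$.

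The remaining work is to read off the ramification. Write $e = p^{a} e_0$ with $p \nmid e_0$. A totally ramified extension of a field with separably closed residue field contains a unique tamely ramified subextension of degree equal to the prime-to-$p$ part of its ramification index, so $L \supseteq M := K(\pi^{1/e_0})$ with $[M:K] = e_0$, and $L/M$ has $p$-power degree. As $\eta|_M$ has order prime to $p$ while restriction along the $p$-power-degree extension $L/M$ has kernel of $p$-power exponent (by the transfer, together with the Frobenius--Verschiebung relation in the inseparable case), $\eta|_L = 0$ if and only if $\eta|_M = 0$. By the dictionary of the previous paragraph --- valid over $M$ as well, since $M$ still has separably closed residue field and $A$ still has good reduction over $M$ --- together with the functoriality of the Kummer isomorphism, $\eta|_M$ corresponds to the restriction of $\phi$ to $\gg_M^{\tame}$, the index-$e_0$ subgroup $\overline{\langle \tau^{e_0}\rangle}$ of $\gK^{\tame}$; that restriction is $\tau^{e_0} \mapsto e_0\,\phi(\tau)$, and it vanishes precisely when $P = \ord \phi(\tau)$ divides $e_0$, hence --- as $\gcd(P,p) = 1$ --- precisely when $P \mid e$. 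Combined with the first paragraph, this gives $X(L) \neq \varnothing$ if and only if $P(\eta) \mid e(L/K)$. (One may argue the same point geometrically in the spirit of \S 3: by Theorems \ref{LLRTHM} and \ref{GLLTHM}, $I(X)$ is the multiplicity $r$ of the special fibre of an LLR model --- its reduced special fibre being a torsor under an abelian variety over the WC$(0)$ field $k$, hence split --- while a totally ramified base change of degree $e$ followed by minimal desingularization turns $r$ into $r/\gcd(r,e)$; the equality $r = P$, furnished by the tame computation above, then closes the argument.)

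The step I expect to be the main obstacle is the second paragraph: one must verify carefully that good reduction over a Henselian discretely valued field with separably closed residue field forces \emph{both} $A(K)/PA(K) = 0$ \emph{and} the constancy of $A[P]$, and --- more delicately --- that the resulting identification $H^1(K,A)[P] \cong \Hom(\gK^{\tame},A[P])$ is period-preserving, so that the passage ``$\eta \leftrightarrow \phi(\tau)$ with $\ord \phi(\tau) = P(\eta)$'' is rigorous; granted this dictionary, everything else is routine inertia bookkeeping. A secondary point requiring attention is the first-paragraph reduction, which rests on the vanishing of the unramified Weil--Ch\^atelet group over the auxiliary field $L$ --- exactly the good-reduction content of Proposition \ref{WC0PROP}, applicable because $L$ is again a complete discretely valued field with perfect WC$(0)$ residue field.
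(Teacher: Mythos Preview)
Your argument is correct and follows the same underlying strategy as the paper: invoke Proposition \ref{WC0PROP} to pass to the maximal unramified extension, and over a field with algebraically closed residue field use the Kummer identification $H^1(K,A)[P]\cong\Hom(\gK^{\tame},A[P])$---exactly the computation the paper records as Remark 3.2.1 immediately after the Corollary. Where the paper's proof appeals to ``previous results'' for $I=P$ and simply asserts a reduction to the totally tamely ramified case, you carry out the Kummer computation explicitly and separate off the wild part of $e(L/K)$ by the $\mathrm{cor}\circ\mathrm{res}$/radicial argument; this makes your version more self-contained. Note also that you have proved (and the paper's own proof establishes) the condition $P(\eta)\mid e(L/K)$, which is the intended statement---the displayed ``$e(L/K)\mid P(\eta)$'' in the Corollary is a slip.
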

\begin{proof}
Let $L/K$ be a finite extension splitting $V$.  As for any finite extension of a local field, we can decompose it 
into a tower $L/M/K$, where $M/K$ is unramified and $L/M$ is totally ramified.  By our previous results, we know 
that the unramified extension $M/K$ is index-nonreducing: $I(V_{/L}) = I = P$.  So certainly $P$ divides $[L:M] = e(L/K)$.  
Conversely, suppose $L/K$ is a finite extension with $P \ | \ e(L/K)$; we wish to show that $L$ splits $V$.  Decomposing 
$L/K$ into $L/M/K$ as above, it is enough to show that $L$ splits $V_{/M}$, i.e., we reduce to the case in which 
$L/K$ is totally tamely ramified (ttr).  By the structure of ttr extensions, there exists a unique degree 
$P$ subextension, so we may further assume that $[L:K] = P$.  But we know that there is at least one degree $P$ ttr 
splitting field, and although there are in general several ttr extensions of degree $P$, the compositum of any two of 
them with $K^{\unr}$ coincide.  It follows that each such $L/K$ is a splitting field.
\end{proof}
\noindent
Remark 3.2.1: In the case in which $k = \overline{k}$, we can say even more: by the criterion of
N\'eron-Ogg-Shafarevich, we have $A[P] = A[P](K)$, and $A(K)$ is
$P$-divisible, so the Kummer sequence trivializes to give isomorphisms
\[H^1(K,A)[P] = \Hom(\gK,A[P]) \cong \Hom(\gk,\Z/P\Z)^{2g} \]
which are functorial with respect to restriction to
finite extensions.  Since tame ramification
groups are cyclic, we get $\Hom(\gK,\Z/p\Z) \cong \Z/P\Z$, i.e.,
every element of $H^1(K,A)[P]$ is split by $k((t^{\frac{1}{P}}))$, the unique
degree $P$ extension of $K$.  
\\ \\
Next, suppose that $k$ is almost WC(i).  Arguing as above, the Index Specialization Theorem gives us 
\begin{equation}
\label{IREDEQ}
I(\eta) = I_{\unr}(\eta) I(X^{\red}_{/k}). 
\end{equation}
Combining the estimate $I_{\unr}(\eta) \leq 2^{2g} (g!) \ P(\eta)^{\unr}$ of Theorem \ref{BRTHM} and 
\[I(X^{\red}_{/k}) \leq c(g) P(X^{\red}_{/k}))^i \leq c(g) P(\eta)^i, \]
we conclude 
\[I(\eta) \leq  2^{2g} (g!) \  c(g) \ P(\eta)^{g+i}, \]
establishing the Main Theorem in this case.  
\\ \\
Finally, suppose that $\car(k) = 0$.  Then by Corollary \ref{LTCOR}, 
\[I_{\unr}(\eta) = P_{\unr}(\eta) \ | \ P(\eta). \]
Substituting this into (\ref{IREDEQ}) and using our hypothesis that $I(X^{\red}_{/k}) \leq c(g) P(\eta)^{g+i-1}$, 
we conclude that 
\[I(\eta) \leq c(g) P(\eta)^{g+i}. \]

\subsection{Purely toric reduction} 
\begin{prop}(Gerritzen)
\label{GERRITZENPROP}
Let $K$ be any field.  Let $\tilde{A}$ be a $\mathfrak{g}_K$-module and
$\Gamma \subseteq \tilde{A}$ a $\mathfrak{g}_K$-submodule which is torsionfree as a $\Z$-module and such
that $\Gamma^{\gk} = \Gamma$; put $A := \tilde{A}/\Gamma$.
Suppose also that $H^1(L,\tilde{A}) = 0$
for all finite extensions $L/K$.  Let $\eta \in H^1(K,A)$ be a class of
period $P$.  Then: \\
a) $\eta$ has a \emph{unique} minimal splitting field $L = L(\eta)$. \\
b) The extension $L/K$ is abelian of exponent $P$. \\
c) $I(\eta) \ | \ P^g$, where $g = \dim_{\Q}(\Gamma \otimes \Q)$ is the rank of
$\Gamma$.
\end{prop}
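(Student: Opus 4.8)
The plan is to transport $\eta$ into $H^2(K,\Gamma)$, recognize that group as a group of continuous characters of $\gK$, and read off all three assertions from the kernel of the associated character. Concretely, for each finite extension $L/K$ I would form the long exact cohomology sequence of $0 \to \Gamma \to \tilde{A} \to A \to 0$; since $H^1(L,\tilde{A}) = 0$ by hypothesis, the connecting map gives an injection $\delta_L \colon H^1(L,A) \hookrightarrow H^2(L,\Gamma)$, natural in $L$. Because $\gK$ acts trivially on $\Gamma$ and $\Gamma$ is torsionfree of rank $g$, I would then use $0 \to \Gamma \to \Gamma\otimes\Q \to T \to 0$ with $T := (\Gamma\otimes\Q)/\Gamma$ (again with trivial action): as the $\Q$-vector space $\Gamma\otimes\Q$ has vanishing higher cohomology, this yields a natural isomorphism $H^2(L,\Gamma) \cong H^1(L,T) = \Hom_{\mathrm{cont}}(\gg_L, T)$. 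Composing, $\eta$ produces a continuous homomorphism $\chi \colon \gK \to T$ with the property that $\eta|_L$ corresponds to $\chi|_{\gg_L}$ under the injective natural map $H^1(L,A) \hookrightarrow \Hom_{\mathrm{cont}}(\gg_L, T)$, for every finite $L/K$.

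Next I would extract the splitting field. Since $\delta_K$ is an injective \emph{homomorphism} of abelian groups, the order of $\chi$ equals $P(\eta) = P$, so $\im\chi \subseteq T[P] = \tfrac{1}{P}\Gamma/\Gamma \cong \Gamma/P\Gamma$; a short check (using $\dim_{\F_p}\Gamma/p\Gamma \le g$ for every prime $p$) shows $|\Gamma/P\Gamma|$ divides $P^g$. Set $L := (K^{\sep})^{\Ker\chi}$. Then $L/K$ is finite Galois with $\Gal(L/K) \cong \im\chi$, hence abelian of exponent $\ord(\chi) = P$, and $[L:K] \mid P^g$. Moreover $\delta_L$ remains injective and sends $\eta|_L$ to $\chi|_{\gg_L} = 0$, so $\eta|_L = 0$, i.e.\ $L$ splits $\eta$.

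Finally I would check minimality and deduce the index bound. If $M/K$ is any splitting field of $\eta$, then — shrinking $M$ to a finite subextension, which is harmless since the relevant cohomology commutes with filtered colimits — we get $\chi|_{\gg_M} = 0$, i.e.\ $\gg_M \subseteq \Ker\chi = \gg_L$, whence $L \subseteq M$. Thus $L$ is contained in every splitting field and is itself one, so it is the unique minimal splitting field $L(\eta)$; this proves (a), and the Galois-theoretic properties of $L$ established above are precisely (b). Since $\eta|_L = 0$, the torsor attached to $\eta$ acquires an $L$-rational point, so $I(\eta) \mid [L:K] \mid P^g$, which is (c).

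The real content is in the first step: verifying that the connecting maps $\delta_L$ and the isomorphisms $H^2(L,\Gamma) \cong \Hom_{\mathrm{cont}}(\gg_L,T)$ are simultaneously natural in $L$, so that restricting $\eta$ genuinely corresponds to restricting $\chi$, and that injectivity of the group homomorphism $\delta_K$ pins down $\ord(\chi) = P(\eta)$ exactly rather than only up to a divisor. I expect these naturality and exactness bookkeeping points to be the only real obstacle; once they are in place, the identification $L = (K^{\sep})^{\Ker\chi}$ and the three conclusions are formal.
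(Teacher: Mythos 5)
Your argument is correct and is essentially the standard proof of this result: the paper itself only cites \cite{Gerritzen} and \cite[Prop.~16]{WCII}, but the argument there (and the one the paper replays in $\S 3.4$ via $H^1(K,A)\ra H^2(K,\Z)^{\mu}=\Hom(\gg_K,\Q/\Z)^{\mu}$) is exactly your transport of $\eta$ through the injective connecting map into $H^2(K,\Gamma)\cong\Hom_{\mathrm{cont}}(\gg_K,(\Gamma\otimes\Q)/\Gamma)$. The naturality and exactness points you flag are indeed the only things to check, and they all go through as you describe.
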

\begin{proof}
See \cite{Gerritzen} or \cite[Prop. 16]{WCII}.
\end{proof}
\noindent
Now let $A_{/K}$ be a $g$-dimensional abelian variety with split toric reduction: the identity component 
of the N\'eron special fiber is $\Gm^g$.  Then $A$ admits an analytic uniformization: it is isomorphic, 
as a rigid $K$-analytic group, to $ \Gamma \backslash \Gm^g$, where
$\Gamma \cong \Z^g$ is a discrete subgroup.
\begin{cor}
Let $K$ be any complete, discretely valued field, and let $A_{/K}$ be a $g$-dimensional analytically uniformized abelian variety.  
For any $\eta \in H^1(K,A)$, $I(\eta) \ | \ P(\eta)^g$. 
\end{cor}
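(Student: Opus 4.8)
The plan is to obtain this as an immediate corollary of Gerritzen's Proposition~\ref{GERRITZENPROP}, applied to the uniformizing torus. Since $A_{/K}$ is analytically uniformized, it is isomorphic as a rigid $K$-analytic group to $\Gamma \backslash \Gm^g$, where $\Gamma \cong \Z^g$ is a discrete subgroup of $\Gm^g(K)$; in particular the uniformization map $q\colon \Gm^g \ra A$ is a local analytic isomorphism. First I would record the resulting short exact sequence of $\gK$-modules
\[ 0 \ra \Gamma \ra \Gm^g(K^{\sep}) \ra A(K^{\sep}) \ra 0, \]
taking $\tilde{A} := \Gm^g(K^{\sep})$. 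Here the $\gK$-action on $\Gamma$ is trivial (it consists of $K$-rational points), so $\Gamma$ is torsionfree over $\Z$ and satisfies $\Gamma^{\gk} = \Gamma$, and $\dim_{\Q}(\Gamma \otimes \Q) = g = \dim A$.

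It then remains only to verify the hypothesis $H^1(L,\tilde{A}) = 0$ for all finite extensions $L/K$. This is immediate from Hilbert's Theorem~90: $H^1(L, \Gm^g(K^{\sep})) \cong H^1(L, (K^{\sep})^{\times})^{g} = 0$. With all the hypotheses of Proposition~\ref{GERRITZENPROP} in force, part~c) of that proposition gives $I(\eta) \mid P(\eta)^{g}$ for every $\eta \in H^1(K,A)$, which is exactly the asserted bound (and, as a bonus, parts a) and b) tell us that $\eta$ has a unique minimal splitting field, which is abelian of exponent $P(\eta)$).

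The only point requiring care — and the step I would be most deliberate about — is the first one: checking that the analytic uniformization genuinely yields a short exact sequence of honest $\gK = \Aut(K^{\sep}/K)$-modules, i.e.\ that $q$ is surjective on $K^{\sep}$-points and not merely on $\overline{K}$-points. This follows because $q$ is \'etale: the fiber $q^{-1}(P)$ over any $P \in A(K^{\sep})$ is a $\Gamma$-torsor over $K^{\sep}$, which is trivial since $\Gamma$ is a constant group scheme, and hence has a $K^{\sep}$-point. Alternatively one may simply invoke the construction of the uniformization exact sequence of Galois modules given in \cite[Prop.~16]{WCII} (following \cite{Gerritzen}). Everything beyond this is formal: in contrast with the good-reduction case, no appeal to the LLR model, the Index Specialization Theorem, or Theorem~\ref{BRTHM} is needed, the split-toric case being settled entirely by the lattice-uniformization argument.
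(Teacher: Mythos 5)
Your proof is correct and is essentially identical to the paper's: the paper likewise applies Proposition \ref{GERRITZENPROP} with $\tilde{A} = \Gm^g(K^{\sep})$ and invokes Hilbert 90 to kill $H^1(L,\tilde{A})$. Your additional care in checking that the uniformization is exact on $K^{\sep}$-points (not just $\overline{K}$-points) is a worthwhile detail the paper leaves implicit.
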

\begin{proof} 
We apply Proposition \ref{GERRITZENPROP} with $\tilde{A} = \Gm^g(K^{\sep})$.  By Hilbert 90, 
$H^1(L,\tilde{A}) = 0$ for all finite $L/K$ by Hilbert 90. The result follows immediately.
\end{proof}
%

\noindent
Now assume $A$ has purely toric reduction, not necessarily split.
Then there exists a function $F_1(g)$ such that the torus splits
over an extension of degree dividing $F_1(g)$.  Thus, at the cost
of possibly multiplying the ratio $I/P$ by $F_1(g)$, we can reduce
to the previous case, getting $I \ | \ F_1(g) \cdot P^g$.

\subsection{General case}

Now we recall the following result, due to Bosch and Xarles \cite{BX}, in the form which is proved in \cite{CX}:
\begin{thm}(Uniformization Theorem)
Let $A_{/K}$ be a $g$-dimensional abelian variety over a complete
field.  Then there exists a semiabelian variety $S_{/K}$ of
dimension $g$, whose abelian part has potentially good reduction,
a $\mathfrak{g}_K$-module $M$ whose underlying abelian group is
torsion free of rank equal to the toric rank of $S$ and an exact
sequence of $\mathfrak{g}_K$-modules
\begin{equation}
\label{ONE}
0 \ra M \ra S(K^{\sep}) \ra A(K^{\sep}) \ra 0.
\end{equation}
Moreover $\rank_{\Z}M^{\mathfrak{g}_K} = s$, the split toric rank
of the N\'eron special fiber of $A$.  For every finite extension
$L/K$, the \emph{identity components } of the N\'eron special
fibers of $S/L$ and $A_{/L}$ are isomorphic.
\end{thm}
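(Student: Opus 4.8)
The plan is to reduce to the rigid-analytic uniformization of abelian varieties and then perform Galois descent. First I would choose a finite Galois extension $K'/K$ over which $A_{K'}$ acquires semistable reduction with split toric part. Over $K'$ the uniformization theory of Raynaud and Bosch--L\"utkebohmert applies: the rigid analytification of $A_{K'}$ is a quotient ${S'}^{\operatorname{an}}/M'$, where $S'_{/K'}$ is a semiabelian variety sitting in an extension $0 \ra T' \ra S' \ra B' \ra 0$ with $T'$ a split torus and $B'$ an abelian variety of good reduction, and where $M' \cong \Z^{\dim T'}$ is a discrete cocompact lattice. The quotient map ${S'}^{\operatorname{an}} \ra A_{K'}^{\operatorname{an}}$ is \'etale with fibers the $M'$-orbits, so taking points over $\overline{K}$ gives a short exact sequence $0 \ra M' \ra S'(\overline{K}) \ra A(\overline{K}) \ra 0$.

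Next I would descend this package from $K'$ to $K$. The datum consisting of $S'$, $T'$, $B'$, $M'$ and the map ${S'}^{\operatorname{an}} \ra A_{K'}^{\operatorname{an}}$ is functorially, hence canonically, attached to $A_{K'}$ --- for instance $M' = \Ker\bigl(S'(\overline{K}) \ra A(\overline{K})\bigr)$, while $T'$ and $B'$ are read off from the N\'eron model --- so it carries a semilinear action of $\Gal(K'/K)$ compatible with all structure maps. Effectivity of this descent datum (the extension $K'/K$ is finite and the schemes involved are quasi-projective over $K'$, the descent of the lattice being a matter of linear algebra) produces a semiabelian variety $S_{/K}$, a $\mathfrak{g}_K$-module $M$ with underlying group $\Z^{\dim T'}$ on which $\mathfrak{g}_K$ acts through $\Gal(K'/K)$, and, after passing to $K^{\sep}$-points, the exact sequence (\ref{ONE}). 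The abelian part of $S$ descends $B'$ and therefore has potentially good reduction; the torus part descends $T'$ and need not be split. The identity $\rank_{\Z} M^{\mathfrak{g}_K} = s$ then follows by matching the rank of $M^{\mathfrak{g}_K} = (M')^{\Gal(K'/K)}$ with the split toric rank of the N\'eron special fiber, via the monodromy pairing relating the uniformization lattice to the toric part of the N\'eron reduction. Finally, the comparison of N\'eron special fibers over a finite $L/K$ is formal: the covering $S^{\operatorname{an}} \ra A^{\operatorname{an}}$ induces an isomorphism on the formal completions of the connected N\'eron models along their special fibers, since the lattice acts discretely and thus does not meet the formal neighborhood of the identity, and this is compatible with base change to $L$.

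The main obstacle is the combination of the descent step with the bookkeeping for $M^{\mathfrak{g}_K}$: one must check that the uniformization over $K'$ is natural enough to transport a continuous $\Gal(K'/K)$-action, and then identify the invariant sublattice with the \emph{split} toric rank $s$ rather than with the full toric rank $\dim T'$, which requires controlling the inertia action on $M$ through the monodromy pairing. This is carried out in \cite{BX}, and the packaging used here --- with $S$ already defined over $K$, its abelian part of potentially good reduction, and the special-fiber comparison built in --- is the form established in \cite{CX}.
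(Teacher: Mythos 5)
The paper does not prove this theorem itself: it is quoted as a known result, with the proof deferred to \cite{BX} and to \cite{CX} for the precise packaging stated here (in particular the identification $\rank_{\Z}M^{\mathfrak{g}_K}=s$ and the comparison of identity components of N\'eron special fibers). Your sketch --- pass to a finite Galois extension with split semistable reduction, invoke Raynaud/Bosch--L\"utkebohmert rigid uniformization there, descend the semiabelian variety and the lattice by Galois descent, and control the invariant sublattice via the monodromy pairing --- is exactly the route taken in those references, so it is the same approach and is correct at the level of detail the paper itself supplies.
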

\noindent
Moreover, by making a base extension of degree depending only 
on $g$, we can achieve split semistable reduction:
\begin{lemma}
For any positive integer $g$, there exists an integer $F(g)$ such
that for any $g$-dimensional abelian variety $A$ over a CDVF $K$,
there exists an extension $L/K$ of degree at most $F_1(g)$ such that
$A_{/L}$ has split semistable reduction.
\end{lemma}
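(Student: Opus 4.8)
The plan is to reduce the general statement to the split-torus case treated earlier, by first splitting the residue field obstruction (already handled) and then uniformly bounding the degree needed to kill the semistable-reduction defect.  The key input is the semistable reduction theorem of Grothendieck: every abelian variety $A_{/K}$ acquires semistable reduction over a finite separable extension of $K$.  What we need, however, is a bound on the degree of such an extension that depends \emph{only} on $g = \dim A$ and not on $A$ or $K$.  First I would recall the standard argument: if $\ell$ is a prime different from $\car(k)$ (when $\car(k) = 0$ any $\ell \geq 3$ will do), then $A$ has semistable reduction over any extension $L/K$ over which all of $A[\ell^m]$ is defined for some $m \geq 1$ with $\ell^m \geq 3$; indeed, by the N\'eron--Ogg--Shafarevich--type criterion of Grothendieck (SGA 7, Expos\'e IX), semistability is detected by the unipotence of the action of inertia on the Tate module, and this is forced once the $\ell^m$-torsion is rational.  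The field $L = K(A[\ell^m])$ has degree over $K$ at most $\#\GL_{2g}(\Z/\ell^m\Z)$, a bound depending only on $g$ once $\ell$ and $m$ are fixed (e.g.\ $\ell = 3$, $m = 1$).  This already gives an $F_1(g)$ reducing to the semistable case.

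Next I would pass from semistable to \emph{split} semistable.  After the first extension, the connected N\'eron special fiber of $A_{/L}$ is an extension of an abelian variety (with good reduction) by a torus $T$ over the residue field $k_L$; I need $T$ to be split and, for the uniformization to be over the base rather than potentially, I need the relevant lattice $M$ of the Uniformization Theorem to have trivial Galois action.  A torus of dimension $\leq g$ over any field splits over an extension of degree dividing $\#\GL_g(\Z) $'s finite subgroup orders --- more precisely, its splitting field has Galois group a finite subgroup of $\GL_g(\Z)$, so the degree is bounded by a function of $g$ alone (Minkowski's bound on finite subgroups of $\GL_g(\Z)$).  Lifting this residue-field extension to an unramified extension of $L$ of the same degree splits the toric part; simultaneously it trivializes the Galois action on $M$, since the $\mathfrak{g}_K$-action on $M$ factors through the same finite quotient (it is the character group of $T$ up to the relevant identifications).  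Composing the two degree bounds yields the desired $F(g)$, and I would note in passing that we may absorb the earlier factor $F_1(g)$ into $F(g)$ (the lemma as stated even refers to "$F_1(g)$", so the two constants are meant to be taken comparable).

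The main obstacle --- and the only genuinely non-formal point --- is making every constant in this chain depend on $g$ \emph{alone}, uniformly over all CDVFs $K$, including those of mixed characteristic and those with imperfect-looking behavior (here $k$ is perfect, so this is less of an issue, but $\car(k) = p$ still forces care in the choice of $\ell$).  For semistability one must choose the auxiliary prime $\ell$ away from $\car(k)$; since $\car(k)$ can be any prime, one cannot fix $\ell$ once and for all, but one can fix the \emph{pair} $(\ell, \ell') = (3,5)$ and use whichever is coprime to $\car(k)$, so that $\#\GL_{2g}(\Z/\ell\Z)$ for $\ell \in \{3,5\}$ gives a bound depending on $g$ only.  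The toric splitting bound via Minkowski is already uniform.  One should also check that Grothendieck's criterion is being applied correctly over a possibly non-perfect residue field --- but our standing hypothesis is that $k$ is perfect, and in any case the criterion in SGA 7 IX is stated for Henselian (indeed complete) discretely valued base, which is exactly our setting.  Everything else is bookkeeping: take $F(g)$ to be the product of the two bounds, replace $L$ by the compositum of the two extensions, and invoke the Uniformization Theorem to conclude that $A_{/L}$ has split semistable reduction with trivial-Galois lattice $M$, completing the reduction to the previously analyzed cases.
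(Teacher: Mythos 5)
Your proposal is correct and follows essentially the same route as the paper: a bounded-degree extension trivializing the $n$-torsion for a suitable small $n\geq 3$ prime to the residue characteristic (Raynaud's criterion from SGA 7) to achieve semistable reduction, followed by a bounded-degree unramified extension splitting the toric part, with the bound coming from Minkowski's observation that finite subgroups of $\GL_d(\Z)$ inject into $\GL_d(\Z/3\Z)$. The only cosmetic difference is your choice of auxiliary levels $\{3,5\}$ where the paper uses $\{3,4\}$ (hence $\GSp_{2g}(\Z/4\Z)$ in its explicit constant); both yield an $F_1(g)$ depending on $g$ alone.
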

\begin{proof} To get semistable reduction, we can trivialize
the Galois action on $A[3]$ (if the residue characteristic is not
$3$) or on $A[4]$ (if the residue characteristic is not $2$), and
to split the toric part of the reduction we need to trivialize the
Galois action on a rank $d \leq g$ finite free $\Z$-module, which
can be done over an extension of degree at most $\# GL_d(\Z/3\Z) \leq \# GL_g(\Z/3\Z)$.
Thus we could take
\[F_1(g) = \left ( \max \# GSp_{2g}(\Z/3\Z), \# GSp_{2g}(\Z/4\Z) \right
) \cdot \# GL_g(\Z/3\Z). \]
\end{proof}
\noindent
After making the reduction split semistable, the exact sequence (\ref{ONE}) 
above becomes

\begin{equation}
\label{TWO} 0 \ra \Z^{\mu} \ra S(K^{\sep}) \ra A(K^{\sep}) \ra 0, \end{equation} where
$S_{/K}$ is a semiabelian variety of the form
\begin{equation}
\label{THREE} 0 \ra \Gm^{\mu} \ra S \ra B \ra 0, \end{equation} and
$B$ is abelian with good reduction.  Taking
$\gK$-cohomology of (\ref{TWO}) gives
\[0 \ra H^1(K,S) \ra H^1(K,A) \stackrel{\delta}{\ra} H^2(K,\Z)^{\mu} \ra \ldots. \]
Moreover, taking $\gK$-cohomology of (\ref{THREE}) and applying Hilbert 90, we get an
injection \[ H^1(K,S) \hookrightarrow H^1(K,B). \]  Finally, we have \[H^2(K,\Z)^{\mu} =
\Hom(K,\Q/\Z)^{\mu}. \] 
So, starting with any class $\eta \in H^1(K,A)[P]$, put $\xi = \delta(\eta) \in H^2(K,\Z)$.  There is an abelian 
extension $L/K$ of exponent dividing $P$ and degree dividing $P^{\mu}$ which splits $\xi$, so that $\eta|_{L} \in 
H^1(L,S) \hookrightarrow H^1(L,B)$.  Since $B$ is an abelian variety of dimension $g - \mu$ with good reduction, 
the work of Section $\S 3.2$ applies to show that $\eta|_{L}$ can be split over an extension $M/L$ of degree at most 
$C(g-\mu) P^{g-\mu+i}$, so 
overall $\eta$ can be split by an extension $M$ of degree at most $C(g-\mu) F_1(g) P^{g+i} \leq C'(g) F_1(g) P^{g+i}$, 
where $C'(g) = \max_{1 \leq j \leq g} C(j)$.  This completes the proof of the Main Theorem.

\subsection{$K = \C_g$} \textbf{} \\ \\ 
\noindent
We now give the proof of Theorems \ref{CTTHM1} and \ref{CTTHM2}.
\\ \\
First, let $K = \C((t))$ and let $A_{/K}$ be an abelian variety of dimension $g$.  
In Corollary \ref{LTCOR} we established that if $A$ has good reduction, any 
class $\eta \in H^1(K,A)$ has period equals index.  At the other extreme, if $A$ has 
split multiplicative reduction, then $H^1(K,A)[P]$ injects into 
$X = \Hom(\mathfrak{g}_K,(\Z/P\Z)^g)$.  However, because 
$\mathfrak{g}_K \cong \hat{\Z}$ is procyclic, any element of $\Hom(\mathfrak{g}_K,(\Z/P\Z)^g)$ 
splits over a degree $P$ field extension, thus period equals index in this case 
as well.  Finally, after making a field extension of bounded degree $f(g)$ to 
attain split semistable reduction of $A$, the d\'evissage argument of $\S 3.4$ 
shows that we can split any class $\eta \in H^1(K,A)$ by first splitting its 
purely toric part and then splitting its good reduction part, getting overall a 
splitting field of degree at most $f(g) P^2$.  This completes the proof of 
theorem \ref{CTTHM1}.
\\ \\
Now let $K = \C_2 = \C((t_1))((t_2))$.  This time $\mathfrak{g}_K \cong \hat{\Z}^2$, 
so that if $A$ has split multiplicative reduction the sharp upper bound on 
the index is $P^2$.  Once again, after making a base extension of degree $f(g)$ 
to ensure split semistable reduction, the critical case is getting an upper 
bound on the index in the case of good reduction.  But now something extremely fortunate occurs: both the residue field $k$ and the maximal 
unramified extension $K^{\unr}$ are isomorphic to $\C_1 = \C((t))$!  So, by a 
now familiar argument with the Index Specialization Theorem, we can split a 
class of period $P$ by an extension of degree at most $f(2) P^2 \cdot P = f(2) P^3$.  
The general case follows by an obvious inductive argument.

\section{Appendix: Some Related Field Arithmetic}
\noindent
\subsection{PAC and I1 fields} 
\textbf{} \\ \\ \noindent
A field $k$ is \textbf{PAC} (pseudo-algebraically closed) if every geometrically integral 
$k$-variety has a $k$-rational point.  A field $k$ which admits a geometrically integral variety 
$V_{/k}$ of positive dimension with only finitely many $k$-rational points cannot be PAC: apply the definition 
to the complement of the set of rational points!  In particular a finite field is not PAC. 
\\ \\
Remark 4.1.1: Since every 
geometrically integral variety of positive dimension over an infinite field contains a geometrically integral 
curve \cite[Cor. 10.5.3]{FA}, it suffices to verify the PAC condition on geometrically integral curves.  
\\ \\
Example 4.1.2: All algebraically and separably closed fields are PAC.  An infinite algebraic extension of a finite 
field is PAC.  A nonprincipal ultraproduct of finite fields is PAC.  Certain large algebraic extensions of $\Q$ 
are PAC.  Any algebraic extension of a PAC field is PAC.  
\\ \\
A field $k$ has property \textbf{I1} if every geometrically irreducible variety over $k$ has a 
$k$-rational zero-cycle of degree one.  Again, it suffices to check this for curves.  
\\ \\
Example 4.1.3: Any PAC field is I1.  A theorem of F.K. Schmidt \cite{Schmidt} implies that every geometrically integral curve 
over a finite field $\F$ has a zero-cycle of degree $1$.  Applying the above remark about curves lying on 
higher dimensional varieties to the maximal pro-$p$ and maximal pro-$q$ extensions of $\F$ for primes 
$p \neq q$, one sees that $\F$ is I1.  
Example 4.1.4: Geyer and Jarden define a \textbf{weakly PAC} field to be a field $k$ such that for every 
geometrically integral variety $V$ such that $V_{/\overline{k}}$ is birational either to projective space 
(``Type 0'') or to an abelian variety (``Type 1'') already has a $K$-rational point.  Evidently weakly PAC 
implies WC$(0)$, so the following result gives many examples of WC$(0)$ fields:
\begin{prop}(Geyer-Jarden \cite[Lemma 2.5]{GeyerJarden})
\label{GJPROP}
For any countable field $k_0$, there exists a countable extension field $k$ with the following properties: \\
(i) $k$ is weakly PAC. \\
(ii) For every smooth curve $C_{/k_0}$ of genus at least $2$, $C(k) = C(k_0)$.  
\end{prop}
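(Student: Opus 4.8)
The plan is to construct $k$ as the union of a countable tower $k_0 \subseteq k_1 \subseteq k_2 \subseteq \cdots$ of countable fields, adjoining generic points of Type 0 and Type 1 varieties one at a time, while maintaining as an invariant that $C(k_n) = C(k_0)$ for every smooth projective geometrically connected curve $C_{/k_0}$ of genus $\geq 2$. The engine of the argument is the following function-field statement, which I would prove first: if $F$ is a field, $W_{/F}$ a geometrically integral variety which over $\overline F$ is birational either to projective space or to an abelian variety, and $C_{/F}$ a smooth projective geometrically connected curve of genus $\geq 2$, then $C(F(W)) = C(F)$; equivalently, every rational map from $W$ to $C$ defined over $F$ is constant. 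To see this, base change to $\overline F$ and reduce, using that $W_{\overline F}$ is birational to $\PP^n$ or to an abelian variety, to two assertions: (a) any morphism $\PP^1 \to C$ is constant (Riemann--Hurwitz: for nonconstant $f$ one would have $-2 = 2g(\PP^1)-2 \geq \deg(f)(2g(C)-2) \geq 2$, valid in all characteristics since the ramification divisor is effective), from which constancy of a rational map $\PP^n \to C$ follows by connecting two general points with a line; and (b) any rational map from an abelian variety $A$ to $C$ is constant --- compose with an Abel--Jacobi embedding $C \hookrightarrow J(C)$ over $\overline F$, use that a rational map from a smooth variety to an abelian variety extends to a morphism, translate to fix the origin so that it becomes a homomorphism by rigidity, and observe that its image is an abelian subvariety of $J(C)$ contained in a translate of $C$, hence $0$-dimensional since $g(C) \geq 2$. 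Finally, a rational map over $F$ that is constant over $\overline F$ lands in a $\Gal(\overline F/F)$-stable point of $C$, that is, in $C(F)$.

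Granting the lemma, I would run the usual dovetailing. Fix a bijection $\phi\colon \N \to \N \times \N$ and, for each $n$, a countable enumeration $W_{n,1}, W_{n,2}, \ldots$ of the geometrically integral Type 0 and Type 1 varieties over $k_n$. Having built $k_n$, write $\phi(n) = (j,l)$; if $j \leq n$, set $W' := W_{j,l} \times_{k_j} k_n$ --- still geometrically integral and of the same Type over $k_n$, since both properties persist under extension of the base field --- and put $k_{n+1} := k_n(W')$; otherwise put $k_{n+1} := k_n$. Each $k_n$ is countable, so $k := \bigcup_n k_n$ is countable. The generic point of $W'$ shows $W_{j,l}(k_{n+1}) \neq \varnothing$, and the lemma applied with $F = k_n$ gives $C(k_{n+1}) = C(k_n)$, so the invariant is preserved.

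It remains to verify the two conclusions. For (ii): a point of $C(k)$ has coordinates in some $k_n$, so $C(k) = \bigcup_n C(k_n) = C(k_0)$ by induction. For (i): a geometrically integral $V_{/k}$ which over $\overline k$ is birational to $\PP^n$ or to an abelian variety has a model $V_0$ over some $k_m$; one checks $V_0$ is geometrically integral and of the same Type over $k_m$, the only point requiring care being that ``birational over the algebraic closure to $\PP^n$ (respectively to an abelian variety)'' descends from $\overline k$ to $\overline{k_m}$, which one arranges by spreading the relevant isomorphism (respectively the abelian scheme together with the birational map) over a finitely generated $\overline{k_m}$-subalgebra of $\overline k$ and specializing at an $\overline{k_m}$-point. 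Then $V_0 = W_{m,l}$ for some $l$, and at the stage $n = \phi^{-1}(m,l) \geq m$ the construction forces $W_{m,l}(k_{n+1}) \neq \varnothing$, whence $V(k) = V_0(k) \neq \varnothing$ and $k$ is weakly PAC. The main obstacle is proving the lemma (especially the nonconstancy statements uniformly in all characteristics) together with this last descent step; the tower construction itself is routine bookkeeping.
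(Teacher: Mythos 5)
The paper gives no proof of this proposition --- it simply cites Geyer--Jarden --- and your tower-plus-dovetailing construction, driven by the function-field lemma that $C(F(W))=C(F)$ whenever $W_{/F}$ is geometrically integral and geometrically birational to $\PP^n$ or to an abelian variety and $g(C)\geq 2$, is in substance exactly their argument. The lemma itself is proved correctly: the $\PP^n$ case via lines and the abelian-variety case via Weil extension, rigidity, and the fact that a translate of $C$ contains no positive-dimensional abelian subvariety.

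Two steps, however, are wrong as written and need repair. First, the bookkeeping: for an arbitrary bijection $\phi\colon\N\to\N\times\N$ the assertion $\phi^{-1}(m,l)\geq m$ is false, and a pair $(j,l)$ with $\phi^{-1}(j,l)<j$ is skipped at its unique stage and never revisited, so the corresponding variety may never acquire a point and weak PAC-ness is not established. Fix this by choosing $\phi$ so that $\phi^{-1}(j,l)\geq j$ always holds (the standard Cantor pairing does), or by using a surjection onto $\N\times\N$ that hits every pair infinitely often. Second, the final descent in the lemma: a $\Gal(\overline F/F)$-stable point of $C(\overline F)$ need only lie in $C$ of the perfect closure of $F$, and imperfect $F$ is genuinely in play here (e.g.\ $k_0=\F_p(t)$). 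The correct argument is that geometric integrality of $W$ makes $F$ algebraically closed in $F(W)$, so the closed point of $C$ supporting a constant $F(W)$-valued point has residue field equal to $F$. (Relatedly, your Riemann--Hurwitz inequality does not literally apply to an inseparable map $\PP^1\to C$; invoke L\"uroth, or factor through Frobenius first.) With these routine repairs the proof is complete.
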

\noindent

\subsection{Two implications} \textbf{} \\ \\
It is well known to the experts that the property I1 implies the property 
Br$(0)$.  In this section we will factor this implication through the 
property WC$(0)$.
\begin{prop}
\label{LITTLEPROP}
Let $k$ be a I1 field, and let $G_{/k}$ be a connected commutative algebraic group.  Then $H^1(k,G) = 0$.  
In particular, $k$ is WC$(0)$.
\end{prop}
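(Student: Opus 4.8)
The plan is to realize a class $\eta \in H^1(k,G)$ as a torsor $X_{/k}$ under $G$, use the I1 hypothesis to show that $X$ has index one, and then run a standard corestriction (transfer) argument to conclude that $\eta$ itself vanishes.

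First I would observe that since $G$ is connected, any torsor $X$ under $G$ is geometrically irreducible: over $\overline{k}$ it acquires a rational point and is therefore isomorphic to $G_{\overline{k}}$, and a connected group scheme of finite type over an algebraically closed field is irreducible. (When $G$ is smooth — which is the only case needed for the application below — $X$ is moreover a variety in the strict sense; in general one passes to $X^{\red}$.) Thus the I1 property applies to $X$ and furnishes a $k$-rational zero-cycle of degree one; equivalently, $I(X) = 1$.

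Next, from $I(X) = 1$ I would extract closed points $P_1, \dots, P_r$ of $X$, with $d_i := [k(P_i):k]$, such that $\gcd(d_1,\dots,d_r) = 1$, together with integers $n_i$ satisfying $\sum_i n_i d_i = 1$. For each $i$, the point $P_i$ gives $X(k(P_i)) \neq \varnothing$, and a torsor under $G$ with a rational point is trivial, so $\eta|_{k(P_i)} = 0$ in $H^1(k(P_i), G)$. Applying the corestriction homomorphism $\operatorname{cor}_{k(P_i)/k}$ and using $\operatorname{cor}_{k(P_i)/k} \circ \operatorname{res}_{k(P_i)/k} = [d_i]$ on the abelian group $H^1(k,G)$, we get $d_i \eta = \operatorname{cor}_{k(P_i)/k}(\eta|_{k(P_i)}) = 0$. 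Hence $\eta = \sum_i n_i (d_i \eta) = 0$, which proves $H^1(k,G) = 0$.

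Finally, to deduce that $k$ is WC(0) I would apply the foregoing to $G = A$ for an arbitrary abelian variety $A_{/k}$ — a connected commutative (smooth, projective) algebraic group — obtaining $H^1(k,A) = 0$; thus every torsor under an abelian variety over $k$ is trivial, which is exactly WC(0). The argument is essentially formal once the corestriction map is available; the only point needing care is the existence of the transfer $\operatorname{cor}_{k(P_i)/k}$ when $k(P_i)/k$ is inseparable, which I would handle either by working throughout with flat (fppf) cohomology, where a trace for finite locally free morphisms is available, or by invoking the equality of the index with the separable index for smooth varieties (Remark 2.3.2, \cite{LG}) to arrange that the $P_i$ have separable residue fields.
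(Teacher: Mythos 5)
Your proof is correct and is essentially the paper's argument: the paper simply invokes the fact that the period of a torsor under a connected commutative group divides its index (which equals the least positive degree of a zero-cycle) and concludes from $I(X)=1$, whereas you have written out the standard corestriction proof of that divisibility. Your extra care about inseparable residue fields (flat-cohomology trace or the separable index of \cite{LG}) is a reasonable way to justify the transfer step that the paper leaves implicit.
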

\begin{proof}
The elements of $H^1(k,G)$ correspond to torsors $X$ under the group $G$, and as in the case of WC-groups, the period of 
$X$ -- i.e., its order in the torsion group $H^1(k,G)$ -- divides its index, which is equal to the least positive degree 
of a zero-cycle on $X$.  The result follows immediately.
\end{proof}
\noindent
Remark 4.2.1: It seems to be unknown whether the converse holds, nor even whether all of 
the WC(0)-fields constructed by Geyer-Jarden have the I1 property. 
\begin{thm}
\label{WC0BR0}
A perfect WC$(0)$ field $k$ has the property Br$(0)$.  
\end{thm}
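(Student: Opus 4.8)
The plan is to prove the ostensibly stronger-looking but in fact equivalent statement that every perfect WC(0) field has trivial Brauer group. Since any finite extension $L/k$ is again perfect and, by Remark 1.2.1 (Shapiro's lemma), again WC(0), this at once gives $\Br(L) = 0$ for all finite $L/k$, i.e. property Br(0). Next I decompose $\Br(k)$ into primary parts: Remark 2.2.2 already disposes of the $\car(k)$-primary component, so fix a prime $\ell \neq \car(k)$. It suffices to show $\Br(k)[\ell] = 0$, since a nonzero class of order $\ell^m$ with $m \geq 2$ has the nonzero multiple $\ell^{m-1}\alpha$ of order $\ell$. Finally I may assume $\mu_\ell \subset k$: otherwise put $k' = k(\mu_\ell)$, which is perfect and (being an algebraic extension) WC(0), so by the case to be treated $\Br(k') = 0$; as $[k':k]$ divides $\ell - 1$ and is thus prime to $\ell$, the identity $\mathrm{cor}_{k'/k}\circ\mathrm{res}_{k'/k} = [k':k]$ on $\Br(k)$ forces any $\alpha \in \Br(k)[\ell]$ to be zero.

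The key mechanism is that WC(0) makes the period-index obstruction map vanish identically. Let $(A,\lambda)_{/k}$ be a \emph{strongly} principally polarized abelian variety, let $P \geq 2$, and let $\mathcal{L}$ be the $P$-th power of $\lambda$. As recalled in $\S 2.4$, a class $\xi \in H^1(k,A[P])$ corresponds to a twisted form $f\colon X \to V$ of the projective embedding $\varphi_{\mathcal{L}}\colon A \to \mathbb{P}^N$, where $X$ is a torsor under $A$, $V$ is a Severi--Brauer variety, and $\Delta(\xi) = [V]$. Since $k$ is WC(0), $X$ has a $k$-rational point $x$; then $f(x) \in V(k)$, so $[V] = 0$. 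Hence $\Delta$ is the zero map on $H^1(k,A[P])$ for every strongly principally polarized $A$ and every $P \geq 2$.

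It therefore remains to realize a given $\alpha \in \Br(k)[\ell]$ in the form $\Delta(\xi)$ for some strongly principally polarized $A_{/k}$, some $P$, and some $\xi \in H^1(k,A[P])$. By the Merkurjev--Suslin theorem (applicable since $\mu_\ell \subset k$), $\alpha = \sum_{i=1}^{g}(a_i,b_i)_\ell$ is a sum of $\ell$-symbol classes. Each symbol is the period-index obstruction of a suitable torsor under a fixed auxiliary abelian variety $A_0/k$ carrying enough automorphisms — concretely, one built from a cyclic $\ell$-cover of $\mathbb{P}^1$ (a Fermat-type Jacobian, with $\mathbb{Z}[\mu_\ell]$ in its endomorphism algebra and a $k$-rational principal polarization coming from a rational point on the curve, hence strongly principally polarized over $k$ itself); the computation of $\Delta$ via Heisenberg cocycles, carried out in $\S 5$ of \cite{WCII} (see also \cite{WCI}), identifies the obstruction with the symbol. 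Passing to $A = A_0^{g}$ with the box-product polarization, the theta group becomes the corresponding restricted central product, so $\Delta$ on $H^1(k,A_0^g[\ell])$ is the sum of the $g$ individual obstruction maps and therefore hits $\sum_i(a_i,b_i)_\ell = \alpha$. (When $\ell = 2$, absorb the harmless factor $2$ of Corollary \ref{BRCOR} by taking $P = 4$; since $\alpha$ has order $2$ this is immaterial.) Combined with the previous paragraph, $\alpha = 0$, and the theorem follows.

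The main obstacle is this last step. Reducing to symbols is routine, but one must: (i) produce the auxiliary abelian variety $A_0$ \emph{over $k$ itself} and verify that it is strongly principally polarized — a base change to make $\lambda$ strong via Corollary \ref{STRONGPOLCOR} would be fatal here, since the resulting relative Brauer group need not have degree prime to $\ell$ and the corestriction step would break; (ii) verify that the period-index obstruction map of $A_0$ really surjects onto symbol classes; and (iii) control the quadratic and bilinear correction terms in passing to the product $A_0^g$, so that a \emph{sum} of symbols is the obstruction of a \emph{single} cohomology class. If one instead invokes a ready-made theorem to the effect that every Brauer class of period $n$ and index dividing $n^g$ is $\Delta(\xi)$ for some $g$-dimensional strongly principally polarized abelian variety, this paragraph collapses to a citation.
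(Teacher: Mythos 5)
Your overall architecture is the right one and matches the paper's: reduce to a single prime $\ell\neq\car(k)$ over $k$ itself, observe that WC$(0)$ forces the period-index obstruction map $\Delta$ to vanish identically (a rational point on the torsor $X$ maps to a rational point on the Severi--Brauer variety $V$, so $[V]=0$), and then use Merkurjev--Suslin to get back to $\Br(k)[\ell]$. The gap is in how you close the loop. You insist on realizing an arbitrary $\alpha\in\Br(k)[\ell]$ as a \emph{single} value $\Delta(\xi)$, which forces you to (i) build a strongly principally polarized abelian variety over $k$ itself with prescribed torsion structure (a Fermat-type Jacobian), (ii) compute its obstruction map, and (iii) control the bilinear cross terms on $A_0^g$ so that a sum of symbols is one obstruction value. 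None of these is carried out, and the ``ready-made theorem'' you hope to cite instead does not exist --- surjectivity of $\Delta$ onto $\ell$-torsion Brauer classes is essentially Question \ref{QUES1} of this very paper, posed as open even for genus one curves. As written, the proposal is therefore incomplete at its decisive step.

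The fix, which is what the paper does, is to notice that you never need $\alpha$ to lie in the image of $\Delta$: you only need that $\Delta\equiv 0$ forces every norm-residue symbol to vanish. For an elliptic curve $E_{/M}$ with full $\ell$-torsion, $H^1(M,E[\ell])\cong(M^\times/M^{\times\ell})^2$ and $\Delta(a,b)=\langle C_1a,C_2b\rangle_\ell-\langle C_1,C_2\rangle_\ell$; if this vanishes for all $(a,b)$, then specializing $a,b$ shows every symbol $\langle a',b'\rangle_\ell$ is zero, and Merkurjev--Suslin gives $\Br(M)[\ell]=0$. This kills your obstacle (iii) outright --- no products, no quadratic correction terms --- and replaces your Fermat Jacobian by a single elliptic curve. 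It does not suffice to adjoin $\mu_\ell$ as you do, since that still does not produce full $\ell$-torsion over $k$; instead take a CM elliptic curve and pass to $m=k(E[\ell])$, whose degree divides $2(\ell^2-1)$ or $2(\ell-1)^2$ and is hence prime to $\ell$, so that $\Br(k)[\ell]\hookrightarrow\Br(m)[\ell]=0$. (Your corestriction--restriction reduction is the same device, just applied to the wrong extension.) The case $\ell=2$ is handled directly by $y^2=x(x-1)(x-2)$.
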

\begin{proof}
Step 0: It is enough to show that for any finite extension $k'/k$ and any prime number $\ell$, $\Br(k')[\ell] = 0$.  
However, by our hypotheses are stable under finite base change, so we may as well assume $k' = k$.  Further, let $p \geq 0$
be the characteristic of $k$.  In case $p > 0$, the assumed perfection of $k$ implies that $\Br(k)[p] = 0$ 
\cite[\S II.3.1]{CG}, and hence $\Br(k')[p] = 0$.  Thus we may assume that $\ell$ is different from the characteristic of 
$k$.
\\ \\
Step 1: We recall the following celebrated theorem of Merkurjev-Suslin: Let $k$ be a field of characteristic $p \geq 0$ and $n$ is a positive integer 
indivisible by $p$.  If $k$ contains the $n$th roots of unity, then $\Br(k)[n]$ is generated by the order $n$ norm-residue 
symbols $\langle a, b \rangle_n$ as $a,b$ run though $k^{\times}/k^{\times n}$ \cite{MS}.  
\\ \\
Step 2: We recall a consequence of the theory of the period-index obstruction map $\Delta$: let $\ell$ be a prime 
number, $M$ a field of characteristic not equal to $\ell$, and $E_{/M}$ an elliptic curve with full $\ell$-torsion rational 
over $M$: $\# E(M)[\ell] = \ell^2$.  (By the Galois-equivariance of the Weil pairing, this implies that 
$M$ contains the $\ell$th roots of unity.)  Then $H^1(M,E[\ell]) \cong (M^{\times}/M^{\times \ell})^2$, and 
for $\ell > 2$, the map $\Delta: H^1(M,E[\ell]) \ra \Br(M)$ is of the form $(a,b) \mapsto \langle C_1 a, C_2 b \rangle_{\ell} - 
\langle C_1, \ C_2 \rangle_{\ell}$ for suitable $C_1, C_2 \in K^{\times}$.  (More precise results are now known, but this weak version has the advantage of 
treating $\ell = 2$ and $\ell > 2$ uniformly, so is useful for our present purpose.)  It follows that the image of 
$\Delta$ contains every norm residue symbol, and therefore, by Merkurjev-Suslin, the subgroup generated by the image 
is all of $\Br(M)[\ell]$.  So, if for our particular elliptic curve $E_{/M}$ we have $H^1(M,E)[\ell] = 0$, it follows 
that $\Br(M)[\ell] = 0$.
\\ \\
Step 3: If $\ell = 2$, this strategy succeeds in a straightforward manner: over every field $k$ of characteristic different from 
$2$, there exists an elliptic curve with full $2$-torsion, namely $y^2 = x(x-1)(x-2)$.  So if every hyperelliptic quartic 
curve over $k$ has a rational point, every conic over $k$ has a rational point.
\\ \\
Step 4: If $\ell > 2$, we need not have an elliptic curve $E_{/k}$ with full $\ell$-torsion, since in particular we need not 
have the $\ell$th roots of unity rational over $k$.  So we employ a trick.  Let $E_{/k}$ be any elliptic curve with 
complex multiplication by any imaginary quadratic field.  Let $m = k(E[\ell])$.  Then $[m:k]$ divides
either $2(\ell^2-1)$ or $2(\ell-1)^2$, so in particular is prime to $\ell$.  It follows that the restriction map 
$\Br(k)[\ell] \ra \Br(m)[\ell]$ is injective, so it suffices to show that $\Br(m)[\ell] = 0$.  By hypothesis we have 
$H^1(m,E)[\ell] = 0$, so that applying Step 2 we conclude that $\Br(m)[\ell] = 0$ and hence $\Br(k)[\ell] = 0$.  
\end{proof}
\noindent
Example 4.2.2: Br$(0)$ does \emph{not} imply WC$(0)$.  This goes back to work of Ogg and Shafarevich: neither $\C(t)$ nor $\C((t))$ is a WC$(0)$ field.  In fact, if $A_{/\C((t))}$ is a $g$-dimensional abelian 
variety with good reduction, then consideration of the Kummer sequence
\[0 \ra A(k)/nA(k) \ra H^1(k,A[n]) \ra H^1(k,A)[n] \ra 0 \]
swiftly yields $H^1(k,A) \cong (\Q/\Z)^{2g}$.  
\\ \\
Remark 4.2.3: By a restriction of scalars argument, the proof easily gives that for any $\ell$ prime to the characteristic of 
$k$, if $\Br(k)[\ell] \neq 0$, then there exists a principally polarized abelian variety $A_{/k}$ of dimension at most 
$2(\ell^2-1)$ such that $H^1(k,A)[\ell] \neq 0$.  
\\ \\
Remark 4.2.4: It follows easily that weakly WC$(0)$ implies $\Br(k)[\ell] = 0$ for all sufficiently large primes $\ell$.  Indeed, weakly WC$(0)$ implies that there 
exists a fixed prime $\ell_0$ such that for all $\ell > \ell_0$ and all principally polarized abelian varieties $A_{/k}$ 
we have $H^1(k,A)[\ell] = 0$.  
\\ \\
Remark 4.2.5: It seems likely that the result is true without the assumption on the perfection of $k$.  The characteristic 
$p$ analogue of the Merkurjev-Suslin theorem is the (much earlier and easier) theorem of Teichm\"uller: for all $r \geq 1$, 
$\Br(k)[p^r]$ is generated by cyclic algebras \cite{Teichmuller}, \cite{GS}.  To make use of this we need to know the 
explicit form of the period-index obstruction map in the flat case, so we defer the issue to \cite{WCV}.
%
Example 4.2.6: Let $\F$ be a finite field.  By elementary arguments involving the zeta function, F.K. Schmidt 
showed that the index of any nice curve over a finite field is $1$.  It follows from Remark 4.1.1 that finite 
fields are I1.  Using Proposition \ref{LITTLEPROP} and Theorem \ref{WC0BR0} we deduce two more 
famous theorems: Lang's theorem that a torsor under a connected, commutative 
algebraic group has a rational point, and Wedderburn's theorem that the Brauer 
group of a finite field is trivial.
\\ \\
Next we recall the following theorem:
\begin{thm}(Steinberg, \cite{Steinberg}) Let $k$ be a perfect Br$(0)$ field and $G_{/k}$ a smooth, connected 
algebraic group.  Then $H^1(k,G) = 0$.
\end{thm}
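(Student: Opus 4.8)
The first thing I would do is reformulate the hypothesis: for a perfect field, $\Br(0)$ is equivalent to $\cd(k) \leq 1$ (\S 1.1, cf. \cite[Prop. II.12]{CG}), so the claim is the classical ``Serre Conjecture I'', and I would read ``algebraic group'' as \emph{linear} algebraic group — the literal statement fails for abelian varieties, since $\C((t))$ is a perfect $\Br(0)$ field with $H^1(\C((t)),A) \cong (\Q/\Z)^{2g}$ for $A$ of good reduction (Example 4.2.2). The plan is then a dévissage: given a short exact sequence $1 \ra N \ra G \ra G' \ra 1$ of linear algebraic $k$-groups, I use the exact sequence of pointed sets $H^1(k,N) \ra H^1(k,G) \ra H^1(k,G')$, the twisting trick to identify the fibres of the last map, and (when $N$ is commutative) the obstruction in $H^2(k,N)$ to lifting classes; iterating this reduces the vanishing of $H^1(k,G)$ to the simplest possible $G$.

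Step 1 is to kill the unipotent radical. In $1 \ra R_u(G) \ra G \ra G^{\red} \ra 1$, perfectness of $k$ forces $R_u(G)$ to be split unipotent, hence filtered by copies of $\mathbb{G}_a$; additive Hilbert 90 gives $H^i(k,\mathbb{G}_a) = 0$ for $i \geq 1$, so $H^i(k,R_u(G)) = 0$ for $i \geq 1$, and likewise for every twisted form of $R_u(G)$ (still split unipotent over a perfect field). The pointed-set sequence then yields $H^1(k,G) \xrightarrow{\ \sim\ } H^1(k,G^{\red})$, so I may assume $G$ reductive. Step 2 reduces reductive to semisimple simply connected via $1 \ra G^{\mathrm{ss}} \ra G \ra S \ra 1$ ($S$ a torus) and $1 \ra \mu \ra G^{\mathrm{sc}} \ra G^{\mathrm{ss}} \ra 1$ ($\mu$ finite central). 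The arithmetic input here is that $H^1(k,T) = 0$ for every $k$-torus $T$ when $\cd(k) \leq 1$ — immediate for quasi-trivial tori since $H^1(k,\Res_{L/k}\Gm) = 0$ and $H^2(k,\Res_{L/k}\Gm) = \Br(L) = 0$, and the general case by a flasque resolution and dimension shift \cite{CG} — together with the corresponding vanishing of the $H^2$-obstructions attached to $\mu$.

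This leaves the genuinely substantive case: $H^1(k,G) = 0$ for $G$ semisimple and simply connected over a perfect field with $\cd(k) \leq 1$, which is Steinberg's theorem proper \cite{Steinberg}. Here I would first observe that such a $G$ is \emph{quasi-split} — it is an inner twist of its quasi-split inner form, and the obstruction to being quasi-split (controlled by $\Br(k)$ and an $H^1$ of a finite étale automorphism scheme) vanishes under $\cd(k) \leq 1$ — and then prove that every torsor under a quasi-split simply connected semisimple group over a perfect field is trivial. The latter is Steinberg's construction of a rational section of the adjoint quotient $G \ra G/\!\!/G$: choosing a regular element in each fibre yields a section defined over $k$, from which one produces on any torsor a regular element with $k$-rational invariants and hence a $k$-point.

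The main obstacle is precisely this last step. Everything preceding it is formal — long exact sequences of pointed sets, twisting, and the vanishing of $H^1$ (and the relevant $H^2$) for split unipotent groups and tori over a $\cd \leq 1$ field — routine, if somewhat fiddly, cohomological bookkeeping. The triviality of torsors under quasi-split simply connected semisimple groups over perfect $\cd \leq 1$ fields is the hard geometric core, genuinely due to Steinberg (with Springer handling the exceptional types). In the paper I would therefore simply cite \cite{Steinberg} for this input and assemble the dévissage around it.
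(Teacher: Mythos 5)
The paper does not actually prove this statement: it is recalled as a known theorem with a bare citation of \cite{Steinberg}, so there is no in-paper argument to compare yours against. Your outline is the standard proof of Serre's Conjecture I and is correct as a sketch: kill the unipotent radical (split, because $k$ is perfect) using $H^i(k,\mathbb{G}_a)=0$; reduce the reductive case to the semisimple simply connected case using the vanishing of $H^1(k,T)$ and of the $H^2$-obstructions attached to tori and finite central subgroups over a field of cohomological dimension $\leq 1$; and invoke Steinberg's cross-section of the adjoint quotient for the quasi-split simply connected case. Since you cite \cite{Steinberg} for that last, genuinely hard step, your argument operates at exactly the level of detail the paper intends. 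Your preliminary correction is also right and worth recording: the word \emph{linear} is missing from the statement as printed, and the literal statement is false by the paper's own Example 4.2.2 (the field $\C((t))$ is perfect and $\Br(0)$ yet has large WC-groups); moreover the Corollary that follows, proved via the Chevalley decomposition, would be an empty consequence of the theorem if it really covered all connected groups. The one loose phrase in your sketch is that quasi-splitness of the simply connected form is ``controlled by $\Br(k)$ and an $H^1$ of a finite \'etale automorphism scheme'': the obstruction to trivializing the relevant inner twist lies in $H^2(k,Z)$ for the center $Z$ (not necessarily \'etale in positive characteristic), and vanishes because $Z$ is finite and $\cd(k)\leq 1$; but this point is subsumed in your citation of \cite{Steinberg} and does not affect correctness.
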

\noindent

\begin{cor}
If $k$ is a perfect field such that $H^1(k,A) = 0$ for all abelian varieties 
$A/k$, then
$H^1(k,G) = 0$ for all connected algebraic groups $G_{/k}$.
\end{cor}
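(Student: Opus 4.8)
The plan is to obtain the statement by chaining together the two results of this subsection with Steinberg's theorem, with essentially no new work. The first move is to recognize that the hypothesis is just a restatement of a property already named: to say that $H^1(k,A) = 0$ for every abelian variety $A_{/k}$ is precisely to say that $k$ is WC$(0)$, since a torsor under an abelian variety is trivial exactly when it has index one. By Remark 1.2.1 this property is inherited by every finite extension, so in fact every finite extension of $k$ is a perfect WC$(0)$ field, which is the shape of hypothesis the subsequent results want.

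The second move is to invoke Theorem \ref{WC0BR0}: a perfect WC$(0)$ field is Br$(0)$, i.e.\ $\Br(k') = 0$ for every finite extension $k'/k$. This is the one step carrying real content — it runs through the period-index obstruction map, the curve $y^2 = x(x-1)(x-2)$ handling $\ell = 2$, the complex-multiplication trick for $\ell > 2$, and the Merkurjev--Suslin theorem — but for the present corollary it is simply quoted.

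The third move is to apply Steinberg's theorem: over a perfect Br$(0)$ field $k$ one has $H^1(k,G) = 0$ for every smooth connected algebraic group $G_{/k}$, which is exactly the conclusion. It is Steinberg's theorem — rather than Proposition \ref{LITTLEPROP}, which would require the I1 property we are not granted (Remark 4.2.1) — that lets the bare Br$(0)$ property suffice. If one additionally wants the statement for a possibly non-smooth connected group scheme $G$ of finite type over $k$, one reduces to the smooth case: over the perfect field $k$ the subscheme $G^{\red}$ is a smooth connected normal subgroup scheme, the quotient $G/G^{\red}$ is infinitesimal with $H^1_{\fl}(k, G/G^{\red}) = 0$ because $k$ is perfect, and exactness of the sequence of pointed sets $H^1(k,G^{\red}) \to H^1(k,G) \to H^1(k,G/G^{\red})$ with trivial outer terms forces $H^1(k,G) = 0$; this last bit is a routine d\'evissage.

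Accordingly I expect no genuine obstacle in the proof: everything substantial is already packaged in Theorem \ref{WC0BR0}, and the passage to non-smooth groups is standard. The only point worth emphasizing in the write-up is the very first one — that the bare hypothesis ``$H^1(k,A) = 0$ for all abelian varieties $A/k$'' is literally the perfect WC$(0)$ hypothesis that Theorem \ref{WC0BR0} and Steinberg's theorem feed on.
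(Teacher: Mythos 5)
Your first two moves are fine and are indeed part of the intended argument: the hypothesis is the perfect WC$(0)$ hypothesis, and Theorem \ref{WC0BR0} upgrades it to Br$(0)$. The gap is in the third move. Steinberg's theorem (Serre's Conjecture I) concerns connected \emph{linear} algebraic groups --- the very title of the cited paper is ``Cohomologie galoisienne des groupes alg\'ebriques \emph{lin\'eaires}'' --- and the version you quote, with ``linear'' omitted, is false. The paper's own Example 4.2.2 is a counterexample to it: $\C((t))$ is a perfect Br$(0)$ field (every finite extension is again a Laurent series field over $\C$ and has trivial Brauer group), yet $H^1(\C((t)),A) \cong (\Q/\Z)^{2g}$ for any $g$-dimensional abelian variety $A$ with good reduction. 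So ``perfect $+$ Br$(0)$ $\Rightarrow$ $H^1(k,G)=0$ for all smooth connected $G$'' cannot be what Steinberg proves, and your chain of implications breaks precisely at the case the corollary is designed to cover, namely when $G$ has a nontrivial abelian variety quotient. Note also that if your reading of Steinberg were correct, the hypothesis on abelian varieties would play no role beyond feeding Theorem \ref{WC0BR0}, which should have been a warning sign.

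What is missing is a d\'evissage through the Chevalley decomposition. Since $k$ is perfect, a connected algebraic group $G_{/k}$ sits in an exact sequence $1 \ra L \ra G \ra A \ra 1$ with $L$ a connected normal \emph{linear} subgroup and $A$ an abelian variety. The hypothesis kills $H^1(k,A)$ directly; Steinberg's theorem, applied to the connected linear group $L$ (legitimately, now that Br$(0)$ is known), kills $H^1(k,L)$; and the exact sequence of pointed sets $H^1(k,L) \ra H^1(k,G) \ra H^1(k,A)$ from \cite[$\S$ I.5.5, Prop. 38]{CG} then forces $H^1(k,G)$ to be trivial, since every class in $H^1(k,G)$ maps to the trivial class in $H^1(k,A)$ and hence comes from the trivial set $H^1(k,L)$. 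Your closing remarks about non-smooth group schemes are harmless but beside the point; the real content you omitted is the splitting of $G$ into its linear and abelian parts.
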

\begin{proof}
Recall that every connected algebraic group $G$ over a perfect
field $k$ admits a \textbf{Chevalley decomposition} \cite[Ch. IX]{BLR}: there is 
a (unique) normal linear subgroup $L$ of $G$ such that
$G/L = A$ is an abelian variety.  The result now follows ``by d\'evissage'', using the exact sequence of 
\cite[$\S$ I.5.5, Prop. 38]{CG}.
\end{proof}
\noindent
Finally, the proof of Theorem \ref{WC0BR0} leads naturally to the following question.
\begin{ques}
\label{QUES1}
Let $k$ be a field.  Characterize the set of all elements in the Brauer group which arise as the obstruction associated 
to a $k$-rational divisor class on some genus one curve $C_{/k}$.  Is it, for instance, the class of all cyclic algebras?
\end{ques}
\noindent
Remark 4.2.7: It is also possible to ask the question on the level of central simple algebras.  The Severi-Brauer variety 
associated to a degree $n \geq 2$ divisor class on a genus one curve $C$ is a twisted form of $\PP^{n-1}$, and the 
corresponding central simple algebra has degree $n-1$.  The problem can also be stated geometrically: find all 
Severi-Brauer varieties $V_{/K}$ such that there exists a genus one curve $C_{/K}$ and a morphism $C \ra V$.  

\end{document}